\long\def\drop#1{}
\DeclareMathOperator{\dil}{dil}
\DeclareMathOperator{\Vol}{Vol}
\DeclareMathOperator\Ric{Ric}
\DeclareMathOperator\inj{inj}
\DeclareMathOperator\Hess{Hess}
\def\R{{\mathbb R}}
\newtheorem{Theorem}{Theorem}[section]
\newtheorem{Lemma}[Theorem]{Lemma}
\numberwithin{equation}{section}
\title{Embeddings of Riemannian manifolds with heat kernels and eigenfunctions}
\author{Jacobus W. Portegies}
\begin{document}

\begin{abstract}
We show that any closed $n$-dimensional Riemannian manifold can be embedded by a map constructed from heat kernels at a certain time from a finite number of points. Both this time and this number can be bounded in terms of the dimension, a lower bound on the Ricci curvature, the injectivity radius and the volume. It follows that the manifold can be embedded by a finite number of eigenfunctions of the Laplace operator. Again, this number only depends on the geometric bounds and the dimension. In addition, both maps can be made arbitrarily close to an isometry. In the appendix, we derive quantitative estimates of the harmonic radius, so that the estimates on the number of eigenfunctions or heat kernels needed can be made quantitative as well.
\end{abstract}

\maketitle

\section{Introduction}

\subsection{Eigenmaps and Diffusion Maps}

Measurements in large experiments or large amounts of data collected for the purpose of machine learning often satisfy certain nonlinear constraints, that is, the data lies on a submanifold of the space of all possible outcomes. Due to the linear character of classical dimension reduction methods such as Principal Component Analysis and classical Multidimensional Scaling, they are ill-suited to pick up the nonlinear structure. In recent years, several nonlinear alternatives have been developed \cite{scholkopf_kernel_1997,roweis_nonlinear_2000,belkin_laplacian_2003,
donoho_hessian_2003,zhang_principal_2004,coifman_diffusion_2006,singer_vector_2012}. 

Two of these algorithms, namely the methods of Eigenmaps \cite{belkin_laplacian_2003} and Diffusion Maps \cite{coifman_diffusion_2006}, make use of eigenfunctions of the Laplace operator on graph approximations of the underlying manifold to embed the manifolds in a lower-dimensional Euclidean space.

The basic idea is as follows. Suppose $\phi_k$ are the eigenfunctions of the (negative of the) Laplace operator on an underlying manifold $M$, that is $- \Delta \phi_k = \lambda_k \phi_k$, where $\lambda_0 < \lambda_1 \leq \lambda_2 \leq \dots$. Loosely speaking, the Diffusion Maps and Eigenmaps are variants of the map $F_N$, defined by
\begin{equation}
F_N(p) := ( \phi_1(p), \dots, \phi_N(p) ).
\end{equation}
Experimentally, it turns out that such a map gives a useful representation or embedding of the manifold. Diffusion Maps have been applied succesfully to problems in machine learning, see for instance \cite{liu_learning_2009}, and to recognize patterns in experiments \cite{giannakis_nonlinear_2012}.

The question whether $F_N$ yields an embedding is also interesting from a different perspective. Whitney's Embedding Theorem states that any smooth $n$-dimensional manifold can be embedded into $\R^{2n}$. Although the statement that the manifold can be embedded in $\R^{2n}$ relies on a clever trick, it is much easier to prove that any $n$-dimensional manifold can be embedded into $\R^{2n+1}$. Greene and Wu \cite{greene_embedding_1975} showed that one can properly embed any noncompact, smooth $n$-dimensional Riemannian manifold into $\R^{2n+1}$ by using harmonic component functions. By the maximum principle, for a compact manifold an embedding with harmonic functions would be impossible. However, a next natural question is whether such an embedding can be performed using eigenfunctions. 

\subsection{Can a manifold be embedded with eigenfunctions?}

B\'{e}rard \cite{berard_volume_1985} answered this question positively, by showing that a given manifold $M$ can be embedded by a normalized version of $F_N$ in a high dimensional unit sphere. 
Later, B\'{e}rard, Besson and Gallot constructed an embedding using all eigenfunctions in the sequence space $\ell^2$ in \cite{berard_embedding_1994}. 
The proof uses that the eigenfunctions form a complete basis for $L^2(M)$, and hence separates points, and that for smooth functions the differential can be obtained by differentiation of the eigenfunction expansion. 
This suggests that in order to get a bound on the number of eigenfunctions needed, these statements about eigenfunction expansions should be quantified.

Abdallah \cite{abdallah_embedding_2012} showed that a time-varying manifold can be embedded by eigenfunctions, when the time-varying metric is analytic. The proof uses that in this setting the space of embeddings will be open, and that the embedding by B\'{e}rard et al. can therefore be truncated.

In 2008, Jones, Maggioni and Schul showed that any smooth $n$-dimensional manifold admitting charts in which the metric is $C^\alpha$, can be locally embedded in $\mathbb{R}^n$ by eigenfunctions of the Laplace Operator \cite{jones_manifold_2008}. They observe that this follows from using a technique that they call the heat triangulation: first they show that a small ball can be embedded by the values of the heat kernels from various points on the manifold. After showing that the same construction can be performed with a truncated heat kernel, they conclude that there are eigenfunctions that embed the small ball. In the extended version of the paper \cite{jones_universal_2010}, Jones et al. mention that a global embedding with eigenfunctions can be obtained as well, but they do not prove this statement. 

\subsection{Further results on embeddings}

Embeddings of this type are reminiscent of the Kodaira embedding theorem in K\"{a}hler geometry. This analogy is reflected in the works by Zelditch \cite{zelditch_fine_1997,zelditch_real_2009} and Potash \cite{potash_euclidean_2013}. Zelditch \cite{zelditch_fine_1997,zelditch_real_2009} has proved that for manifolds of a certain type (for which the geodesic flow is either periodic or aperiodic), the first eigenfunctions provide an almost isometric embedding in Euclidean space. Recently, Potash \cite{potash_euclidean_2013} explained how the argument extends for general compact manifolds.

The embedding by B\'{e}rard, Besson and Gallot \cite{berard_embedding_1994} is obtained by composing the map that sends a point $p$ on the manifold to its heat kernel $K(p,t;.)$, with the eigenfunction expansion. Nicolaescu \cite{nicolaescu_random_2012} adapted the construction by B\'{e}rard, Besson and Gallot, in that he replaced the heat kernel by a kernel that only depends on finitely many eigenfunctions, thereby also proving that one can embed by a finite number of eigenfunctions. 

The embeddings discussed so far are in general at best close to isometric. In contrast, Nash's embedding theorem shows the existence of an isometric embedding in Euclidean space. Recently, Wang and Zhu \cite{wang_isometric_2013} used the embedding by B\'{e}rard, Besson and Gallot in an iteration scheme to create a canonical family of \emph{isometric} embeddings. 

Singer and Wu \cite{singer_vector_2012} introduced a data analysis algorihm similar to the Eigenmaps and Diffusion Maps algorithms, that is however based on the connection Laplacian for vector fields, rather than the Laplace-Beltrami operator for functions. 
Analogous to the embedding by B\'{e}rard, Besson and Gallot, they construct an embedding in a sequence space $\ell^2$, this time recording the inner products of eigenfunctions of the connection Laplacian. 
In \cite{wu_embedding_2013} Wu uses this embedding to introduce a diffusion distance between manifolds and subsequently shows a precompactness theorem for this distance in a class of manifolds given certain geometric bounds.

\subsection{How many eigenfunctions does one need?}

An important question that remains is: How many eigenfunctions does one need to embed and accurately represent the manifold? This question is especially important from a computational perspective, since an answer may lead to analogous bounds on the number of eigenfunctions needed in the Eigenmaps \cite{belkin_laplacian_2003} and Diffusion Maps \cite{coifman_diffusion_2006} algorithms. 

It is clear that the number of eigenfunctions needed to embed the manifold will depend on its geometric complexity. Consider for instance a surface in $\R^3$ consisting of $N$ punctured spheres connected with very small tubes. It follows from the min-max principle and the Poincar\'{e} inequality that the first $N$ eigenfunctions are almost constant on each of the spheres. Indeed, it is easy to construct $N$ orthogonal functions that are constant on the spheres and have Rayleigh quotient close to zero. The first $N$ eigenvalues are therefore close to zero and the functions are $L^2$-close to their average on the spheres by the Poincar\'{e} inequality. Therefore, $F_N$ will not be an embedding. Another example is a flat torus $S^1 \times \epsilon S^1$, which is not embedded by $N$ eigenfunctions unless the corresponding eigenvalue $\lambda_N$ is larger than $1/\epsilon^2$.

As was also observed by Jones et al.~\cite{jones_manifold_2008}, in terms of analysis, it may be easier to think of embedding with heat kernels. Whereas they used heat kernels to obtain a local embedding, we would like to obtain a global embedding, by recording the heat kernels from many points on the manifold, reminiscent of a construction by Gromov \cite{gromov_structures_1981} in which he embeds manifolds into $\R^N$ endowed with the maximum norm, by using truncated distance functions. Let $K$ denote the heat kernel on a manifold $M$. Let us build a map that in its components records the value of the heat kernels based at certain points at a certain time. That is, for a finite set of points $\{q_i\}_{i=1}^{N_0} \subset M$, and a time $t$, define
\begin{equation}
\label{eq:DefMapG}
G(p) := (2t)^\frac{n+1}{2} \left( K(p,t;q_1), \dots, K(p,t;q_{N_0}) \right).
\end{equation}
The question whether $G$ yields an embedding is in fact interesting on its own.

Our objective is to bound the number of heat kernels and the number of eigenfunctions needed in terms of geometric information on the manifold. Moreover, we want to make sure that distances are approximately preserved under the embeddings, that is, we want the local dilatation to be close to one.

Both B\'{e}rard \cite{berard_volume_1985} and B\'{e}rard et al.~\cite{berard_embedding_1994} used the Minakshisundaram-Pleijel asymptotic expansion (cf. \cite{berger_spectre_1971}), that links the heat kernel to the geometry of the manifold. This expansion holds for a smooth manifold, and may not be used directly in less regular settings. In particular, we at most wanted to use $C^\alpha$ bounds on the metric, which led us to not use the expansion.

The local embedding by Jones et al. \cite{jones_manifold_2008} is bi-Lipschitz, with constants that depend on bounds in coordinates: their starting assumption is that there exists a coordinate patch on which the metric is H\"{o}lder continuous, bounded, and coercive. However, these bounds mean that at a smaller scale, and in possibly different coordinates, the manifold is actually close to Euclidean. It is at this scale that the construction with the heat kernels can be performed succesfully. The scale is more or less expressed in terms of the radius at which the coordinates exist and the bounds on the metric, but we would like to replace these conditions by ones that are more geometric.

\subsection{The scale at which a manifold looks Euclidean: the harmonic radius}

Naturally, the scale at which the manifold looks Euclidean plays a large role and is related to the curvature. In case we would like to get bounds on the dilatation, and bounds on the number of eigenfunctions needed, we would need to use coordinates with good regularity. It follows by the work of DeTurck and Kazdan \cite{deturck_regularity_1981} that harmonic coordinates have optimal regularity properties. 

Estimates on the harmonic radius provide a passage from geometric information to a scale at which the metric is close to Euclidean. 
A typical harmonic radius estimate assumes bounds on the geometry of a manifold, for instance on the curvature, diameter and injectivity radius, and concludes that on balls with radius smaller than the harmonic radius, there exist harmonic coordinates in which the metric is close to Euclidean. 
What sense can be given to `close', depends on the exact assumptions on the geometry. 
Hebey and Herzlich's survey \cite{hebey_harmonic_1997} nicely sums up various results.

Here we would like to highlight a few estimates. Jost and Karcher \cite{jost_geometrische_1982} obtain a bound under uniform control on the sectional curvature. Anderson \cite{anderson_convergence_1990} shows that under uniform bounds on the Ricci curvature, the metric can be controlled in the $C^{1,\alpha}$ sense. 

We will initially use the estimate on the $C^\alpha$ harmonic radius by Anderson and Cheeger \cite{anderson_compactness_1992}, as for our purposes it suffices to have $C^\alpha$ control of the metric. Anderson and Cheeger show that in the class of manifolds with a fixed lower bound on the curvature and injectivity radius, there is a uniform lower bound for the harmonic radius, that is, on any ball with smaller radius there exist harmonic coordinates for which the metric coefficients are close to Euclidean and have a small $C^\alpha$ norm for any $0 < \alpha < 1$. 

Anderson and Cheeger obtained the existence of the harmonic radius by a compactness argument, that does not give a quantitative estimate. 
From the perspective of applications to data analysis, it may be important to be able to make quantitative statements.
The harmonic radius estimate by Jost and Karcher \cite{jost_geometrische_1982} is quantitative, but assumes bounds on the sectional curvature, rather than only a lower bound on the Ricci curvature. 
However, it is possible to combine some estimates in the work by Anderson and Cheeger \cite{anderson_compactness_1992} with the Bishop-Gromov inequality and the segment inequality to obtain H\"{o}lder continuity of the metric in distance function coordinates. 
From there, one may solve a Dirichlet problem to obtain harmonic coordinates with quantitative estimates. We present the argument in the appendix.

After initial submission of the manuscript, the author has learned about a recent paper by Bates \cite{bates_embedding_2014} that combines the harmonic radius estimate with the result by Jones et al. \cite{jones_manifold_2008} to obtain a bound on the number of eigenfunctions needed to embed a manifold. Our work differs in that we do not rely directly on the work of Jones et al., but rather apply PDE arguments to arrive at similar results. Moreover, we additionally show that the embeddings are close to isometric.

\subsection{Summary of main results}

In Theorem \ref{Th:EmbHeatKernInf} we show that for small enough time $t$ and a dense enough net $\{q_i\}$ the map $G$ as defined in (\ref{eq:DefMapG}) is indeed an embedding. Additionally, we show that after normalization, the map is almost an isometry when $\R^{N_0}$ is endowed with the maximum norm. 

However dense a net $\{q_i\}$ may be, the map $G$ may not be almost an isometry when embedding into $\R^{N_0}$ with the Euclidean norm, since the measure on the manifold starts playing a role.  In Theorem \ref{Th:EmbDiffWeight} we show that we can weigh the different components differently, and obtain an almost isometry in Euclidean space.
A simple argument then shows that if the points $q_i$ can be chosen more specifically, the map $G$ is an embedding in $\R^{N_0}$ that is almost an isometry. The result is presented in Theorem \ref{Th:EmbHeatKernEucl}.

We estimate the time from below, and the number of points needed from above in terms of the dimension, a Ricci curvature lower bound, the injectivity radius and the volume of the manifold.

In both theorems, we can replace the heat kernel $K$ by the truncated version $K_N$ (as defined in (\ref{eq:HeatKernTrunc})), and the statements still hold. It follows immediately that a manifold $M$ can be embedded using a finite number of eigenfunctions, only depending on the dimension, the Ricci curvature and injectivity radius lower bounds and the volume upper bound. However, in Theorem \ref{Th:EmbEigFunc} we show that a suitable multiple of the map
\begin{equation}
\mathcal{F}_N(p) := (2t)^{\frac{n+2}{4}} \sqrt{2}(4\pi)^{n/4} 
\left(e^{-\lambda_1 t} \phi_1(p) , \dots, e^{- \lambda_N t} \phi_N(p) \right),
\end{equation}
can be made approximately an isometry by taking $t$ small enough, and $N$ large enough. This map is in fact a truncation of the map constructed by B\'{e}rard et al. \cite{berard_embedding_1994}, that was also shown to asymptotically preserve the metric. 
Abdallah has shown, in the time-varying case, that the truncation is asymptotically isometric \cite[Theorem 1.3 (ii)]{abdallah_embedding_2012}. 
The important point is that we can truncate the map uniformly with respect to the geometric bounds and the dimension, and that we do not rely on smoothness of the manifold.

We believe that the use of the harmonic radius makes the argument very transparent. Moreover, Theorems \ref{Th:EmbHeatKernInf} and \ref{Th:EmbDiffWeight}, about embeddings in $\R^{N_0}$ endowed with the maximum norm and Euclidean norm respectively, provide a unifying perspective on a variant of the Kuratowski embedding, or the construction by Gromov \cite{gromov_structures_1981}, the local embedding by Jones et al. \cite{jones_manifold_2008}, and the embeddings by B\'{e}rard \cite{berard_volume_1985} and B\'{e}rard et al. \cite{berard_embedding_1994}. Indeed, for small times, distance functions and the heat kernel are closely related, as is illustrated already by Varadhan's asymptotic formula derived in \cite{varadhan_diffusion_1967}
\[
\lim_{t\downarrow 0} 4t \log K(p,t;q) = - d(p,q)^2.
\]

The organization of the paper is as follows. In Section \ref{se:HeatKernelEstimates}, we will reference estimates on the decay of the heat kernel and its gradient, and bounds on eigenfunctions and eigenvalues that help us show that we can truncate the heat kernel.
In Section \ref{se:HeatTriangulation}, we will prove Theorems \ref{Th:EmbHeatKernInf}, \ref{Th:EmbDiffWeight}, and \ref{Th:EmbHeatKernEucl} that show existence of the embedding using the heat kernel. We will use these results in Section \ref{se:EmbEig} to construct the embedding with eigenfunctions. In the proofs in Section \ref{se:HeatTriangulation}, we will need to use that on a small enough scale, the heat kernel is actually close to the Euclidean version. For that, we will use parabolic Schauder estimates as presented in Section \ref{se:SchauderTheory}. Finally, we derive quantitative estimates on the harmonic radius in the appendix.

\subsection*{Acknowledgments}

I would like to thank my thesis advisor Fanghua Lin for suggesting this problem to me and for helpful discussions. Moreover,  I would like to thank Jeff Cheeger for discussions about the quantitative estimates of the harmonic radius. 

\section{Some background and notation}

Throughout the paper, we will use the letter $C$ for a constant that may change from step to step in a computation.

We denote by $\mathcal{M}(n,\kappa,\iota,V)$ the set of $n$-dimensional, closed Riemannian manifolds $(M,g)$ with the volume bounded above by $V$, Ricci curvature bounded below by $\kappa$, and injectivity radius bounded below by $\iota$. For simplicity, we assume that $M$ and $g$ are smooth, although this assumption can be weakened. 

We denote by $K(p,t;q)$ the heat kernel on $M$. By definition, the heat kernel satisfies for $p,q \in M$, $t > 0$,
\[
\partial_t K(p,t;q) - \Delta K(p,t;q) = 0.
\]
Moreover, for every continuous function $f$ on $M$,
\[
\lim_{t\downarrow 0} \int_M K(p,t;q) f(p) dp = f(q).
\]
The heat kernel on a manifold has the following representation
\[
K(p,t;q) = \sum_{k=0}^\infty e^{- \lambda_k t } \phi_k(p) \phi_k(q),
\]
where $\phi_k$ are the eigenfunctions of the Laplace operator, $-\Delta \phi_k = \lambda_k \phi_k$, normalized by $|\phi_k|_2 = 1$. We define the truncated heat kernel $K_N$ by
\begin{equation}
\label{eq:HeatKernTrunc}
K_N(p,t;q) := \sum_{k=0}^N e^{ - \lambda_k t } \phi_k(p) \phi_k(q).
\end{equation}
Finally, we denote by $\Gamma_E$ the standard heat kernel in $\R^n$.
\[
\Gamma_E(x,t;y)
:= \frac{1}{(4 \pi t)^{n/2}} 
	\exp\left[ - \frac{|x-y|^2}{4 t} \right].
\]

The local dilatation of a map $f$ from a metric space $X$ to a metric space $Y$ at a point $p$ is defined as
\begin{equation}
\dil_p(f) := \lim_{r \to 0} \sup_{x,y \in B_r(p) } \frac{d(f(x),f(y))}{d(x,y)}.
\end{equation}
When $M$ is a smooth Riemannian manifold, that is embedded by a smooth map $f$ into a normed, finite-dimensional vector space $V$, the dilatation is given by
\[
\dil_p(f) = |(d f)_p|,
\]
where the norm on the right hand side is interpreted as the operator norm of the map from $T_p(M)$ to $T_{f(p)}(f(M))$. 

\subsection{The harmonic radius}
\label{se:HarRad}

With a lower bound on the Ricci curvature and injectivity radius, there is a lower bound on the radius of balls on which there exist harmonic coordinates. This radius will determine the scale that will play an important role in the rest of the paper. Anderson and Cheeger proved the following theorem \cite{anderson_compactness_1992}. 

\begin{Theorem}[cf. {\cite{anderson_compactness_1992}}]
For every $Q > 1$ and $0 < \alpha < 1$, there is a radius $r_h(n,\kappa,\iota,\alpha,Q)$ such that for every $(M,g) \in \mathcal{M}(n,\kappa,\iota,V)$, and any ball $B_r(p)$ on $M$ with $r \leq r_h$, there exist harmonic coordinates $u: B_r(p) \to \mathbb{R}^n$ such that the coefficients $g_{ij}$, given by,
\begin{equation}
g_{ij} = g\left( \frac{\partial}{\partial u^i}, \frac{\partial}{\partial u^j} \right),
\end{equation}
satisfy

\begin{subequations}
\label{eq:BoundsOng}
\begin{align}
Q^{-1} (\delta_{ij}) \leq (g_{ij}) &\leq Q (\delta_{ij}) \qquad \text{ as bilinear forms},\\
r_h^\alpha \| g_{ij} \|_{C^\alpha} &\leq Q - 1.
\end{align}
\end{subequations}
\end{Theorem}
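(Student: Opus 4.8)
The plan is to argue by contradiction, combining the scaling behaviour of a pointwise harmonic radius with a compactness theorem for manifolds with a lower Ricci bound. First I would introduce, for $p\in M$, the quantity $r_h(p)$ defined as the supremum of those radii $r$ for which harmonic coordinates exist on $B_r(p)$ satisfying \pref{eq:BoundsOng} (with $r$ in place of $r_h$). On a closed smooth manifold this is positive, finite and continuous in $p$, and --- the key elementary observation, which dictates the right normalization in the definition --- it is scale-covariant: the harmonic radius of $(M,\lambda^2 g)$ at $p$ equals $\lambda\,r_h(p)$. It then suffices to bound $\inf_{p\in M} r_h(p)$ from below uniformly over $\mathcal{M}(n,\kappa,\iota,V)$.

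If no such bound existed, there would be manifolds $(M_i,g_i)\in\mathcal{M}(n,\kappa,\iota,V)$ and points $p_i\in M_i$ with $\rho_i := r_h(p_i)\to 0$. I would rescale, replacing $g_i$ by $\tilde g_i := \rho_i^{-2} g_i$ while keeping the centre $p_i$. By the scaling property the harmonic radius of $(M_i,\tilde g_i)$ at $p_i$ is now exactly $1$, while $\Ric_{\tilde g_i}\geq\rho_i^2\kappa\to 0$ and $\inj_{\tilde g_i}(p_i)\geq\rho_i^{-1}\iota\to\infty$. The next step is to invoke a priori estimates: with a lower Ricci bound and a lower injectivity radius bound one obtains, through Bishop--Gromov volume comparison together with control on distance functions, harmonic coordinate charts around $p_i$ of a definite size on which the $\tilde g_i$ are bounded in $C^\alpha$ uniformly in $i$. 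Passing to a subsequence, the pointed spaces $(M_i,\tilde g_i,p_i)$ then converge in the pointed $C^{\alpha'}$ topology, for every $\alpha'<\alpha$, to a pointed $C^\alpha$ Riemannian manifold $(M_\infty,g_\infty,p_\infty)$.

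To finish, I would identify the limit. Since the rescaled metrics flatten out ($\Ric_{\tilde g_i}\to 0$) and the injectivity radii blow up, a neighbourhood of $p_\infty$ is isometric to a Euclidean ball, so its harmonic radius at $p_\infty$ is $+\infty$. On the other hand the harmonic radius is continuous under pointed $C^{\alpha'}$ convergence --- harmonic coordinates for $\tilde g_i$ converge to harmonic coordinates for $g_\infty$, the bounds \pref{eq:BoundsOng} pass to the limit, and by openness of the strict inequalities they persist on a slightly larger ball --- so the harmonic radius of $(M_\infty,g_\infty)$ at $p_\infty$ equals $\lim_i 1 = 1$. This contradiction proves the theorem.

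The genuinely hard step is the a priori estimate used to obtain compactness: that a \emph{lower} bound on the Ricci curvature alone suffices to produce harmonic charts of definite size with a uniform $C^\alpha$ bound on the metric. With a two-sided Ricci bound this would be Anderson's convergence theorem and one would even get $C^{1,\alpha}$ control; the point of Anderson--Cheeger is to make do with one-sided control, which forces one to control distance functions via volume comparison, the segment inequality and Abresch--Gromoll-type estimates in order to build a preliminary chart, then to solve a Dirichlet problem for harmonic coordinates and run an elliptic bootstrap, keeping every constant uniform over $\mathcal{M}(n,\kappa,\iota,V)$. A secondary subtlety is the continuity of the harmonic radius under this weak convergence, which requires care about whether open or closed balls appear in its definition.
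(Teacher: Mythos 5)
Your argument is the classical blow-up/compactness proof, i.e.\ essentially the route of Anderson and Cheeger themselves, from whom the theorem is quoted; it is genuinely different from the proof this paper actually supplies. The paper's own proof is the direct, quantitative construction in the appendix: the supremum bound \pref{eq:BoundLaplRho} on $\Delta\rho_{p_i}$ from Laplacian comparison, the Bochner formula plus the segment inequality giving the scale-invariant Hessian bound \pref{eq:HessSquared}, a Campanato-type iteration giving H\"older continuity of $g(\nabla\rho_i,\nabla\rho_j)$ (Lemma \ref{le:Holder}), distance-function coordinates (Theorem \ref{th:DistCoord}), and finally harmonic coordinates obtained by solving a Dirichlet problem and combining the maximum principle (via the Abresch--Gromoll function) with interior elliptic estimates (Theorem \ref{th:HarmCoord}). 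That route is longer, but it produces an explicit admissible $r_h$, which is the whole point here: the paper emphasizes that the compactness argument ``does not give a quantitative estimate,'' and the quantitative harmonic radius is what makes the bounds on the number of heat kernels and eigenfunctions effective. Your route, even if completed, yields existence of $r_h$ only.

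As written, your sketch also has genuine gaps at exactly the places where the analysis lives. First, the precompactness step is deferred to ``a priori estimates'' producing charts of definite size with uniform $C^\alpha$ control of $\tilde g_i$ from lower Ricci and injectivity bounds alone; but that assertion is, up to the $Q$-closeness, the theorem being proved, and establishing it is precisely the content of the appendix. The standard way to avoid this near-circularity in the blow-up argument is to choose $p_i$ as (near-)minimizers of the pointwise harmonic radius on the closed manifold $M_i$, so that after rescaling \emph{every} point has harmonic radius at least (nearly) $1$ and the charts needed for pointed $C^{\alpha'}$ convergence come from the definition of $r_h(\cdot)$ itself; with an arbitrary sequence $r_h(p_i)\to 0$ you have no chart control at points near $p_i$. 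Second, identifying the limit is not a consequence of $\Ric_{\tilde g_i}\ge \rho_i^{2}\kappa\to 0$ and $\inj_{\tilde g_i}\to\infty$ alone: the convergence is only $C^{\alpha'}$, so no pointwise curvature information survives in the limit, and flatness must instead be extracted from quantitative estimates that do pass to the limit --- in Anderson--Cheeger's argument, the rescaled version of \pref{eq:HessSquared}, whose right-hand side is of order $\Lambda\rho_i\to 0$, forces the limiting distance functions to be affine with orthonormal gradients, producing a Euclidean chart. Finally, the direction of ``continuity of the harmonic radius'' you actually need is that a nearly Euclidean limit forces $r_h^{\tilde g_i}(p_i)>1$ for large $i$; this requires transplanting the limit coordinates back to $M_i$ and re-harmonizing them by solving a Dirichlet problem with elliptic estimates, not merely the openness of strict inequalities. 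So the skeleton matches the cited source, but the two steps you treat as black boxes are the substance of the proof --- and they are what the appendix carries out, in a way that in addition makes $r_h$ explicit.
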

In the appendix we will give a quantitative estimate on the harmonic radius $r_h$.

\section{Some heat kernel estimates}
\label{se:HeatKernelEstimates}

We will now recall some properties of the decay of the heat kernel, following mostly the book by Grigor'yan \cite{grigoryan_heat_2012}. We will show how these estimates imply heat kernel decay in coordinates. Moreover, we will see how the decay of the heat kernel implies growth of the eigenvalues of the Laplace operator on the manifold, with a lower bound expressed in the Ricci curvature, volume, injectivity radius, and the dimension. After combining this with elliptic estimates, we conclude that the heat kernel can be truncated.

\subsection{Heat kernel decay}

By the estimate on the harmonic radius in Section \ref{se:HarRad}, there is a radius $r_h = r_h(n,\kappa,\iota,\alpha=1/2,Q=\sqrt{2})$ such that for every open subset $U \subset M$ contained in a ball with radius less than $r_h$, the Faber-Krahn inequality holds for $\lambda_{\min}(U)$, the smallest Dirichlet eigenvalue of the Laplace operator on the domain $U$,
\begin{equation}
\lambda_{\min} (U) \geq a(n) |U|^{-2/n},
\end{equation}
where for any measurable set $U$ on the manifold, $|U|$ denotes the standard volume measure of $U$.
By \cite[Theorem 15.14]{grigoryan_heat_2012}, 
\begin{equation}
\label{eq:DecHeatKernMan}
K(p,t;q) \leq \frac{C(n) \left( 1+ \frac{d(p,q)^2}{t} \right)^{n/2}}
 {\left(a(n) \min(t,r_h^2)\right)^{n/2}}\exp\left[ - \frac{d(p,q)^2}{4t}\right],
\end{equation}
where $d(p,q)$ denotes the (geodesic) distance between $p$ and $q$. 

From interior parabolic Schauder estimates it also follows that for $t \leq 2 r_h^2$,
\begin{equation}
\label{eq:GradDecHeatKernMan}
|\nabla K(p,t;q)| \leq \frac{D(n)}
 {t^{(n+1)/2}}\exp\left[ - \frac{d(p,q)^2}{8t}\right].
\end{equation}
Indeed, for points $p$ and $q$, we can use parabolic interior Schauder estimates (cf. \cite[Ch. 4, Theorem 4]{friedman_partial_2008}) on a ball around $p$, to conclude (\ref{eq:GradDecHeatKernMan}) for $t \leq \min ( d(p,q)^2/2 , 2 r_h^2)$. 
For $t > d(p,q)^2/2$, we can use that the heat kernel is $C^1$-close to the Euclidean heat kernel, as explained in Section \ref{se:SchauderTheory}, to conclude that the bound (\ref{eq:GradDecHeatKernMan}) also holds on this scale.

\subsection{Heat kernel decay in coordinates}
\label{se:DecCoord}

Let $p \in M$, let $Q \leq \sqrt{2}$ and  $r < r_h(n,\kappa,\iota,\alpha = 1/2, Q)$, and let the coordinates $u:B_{r_h} \to \R^n$ be harmonic satisfying (\ref{eq:BoundsOng}) and $u(p) = 0$. 
Define the rescaled heat kernels
\begin{equation}
\tilde{K}(x,s;q) := r^n K\left( u^{-1}(x r), s r^2; q \right),
\end{equation}
and
\begin{equation}
\Gamma(x,s;y) := r^n K \left(u^{-1}(xr), s r^2; u^{-1}(y r) \right).
\end{equation}

It follows that in this case, there is a constant $C_d = C_d(n)$ such that for $s < 2 r_h^2/r^2$,
\begin{equation}
\label{eq:DecCoord}
\Gamma(x,s;y) \leq \frac{C_d(n)}{ s^{n/2} } \exp\left[ - \frac{|x-y|^2}{8s}\right],
\end{equation}
and a constant $D_d= D_d(n)$ such that
\begin{equation}
\label{eq:GradDecCoord}
\left| \nabla \Gamma(x,s;y) \right| \leq 
	\frac{D_d(n)}{s^{(n+1)/2}} \exp\left[ - \frac{|x-y|^2}{8s} \right].
\end{equation}

\subsection{Eigenvalue growth}

We use (\ref{eq:DecHeatKernMan}) to  bound the trace of the heat kernel as follows
\begin{equation}
\int_M K(p,t;p) dp \leq \mathrm{Vol}(M) \frac{C(n)}{(a(n) \min(t,r_h^2))^{n/2}}.
\end{equation}
It follows by \cite[Theorem 14.25]{grigoryan_heat_2012} that if
\begin{equation}
k \geq \frac{C(n) \mathrm{Vol}(M)}{a(n)^{n/2} r_h^n}e^{n/2},
\end{equation}
then the following lower bound on $\lambda_k$ holds
\begin{equation}
\label{eq:EigValLowBound}
\lambda_k(M) \geq \frac{n}{2e} \, a(n) \left( \frac{k}{C(n) \mathrm{Vol}(M)} \right)^{2/n}.
\end{equation}

\subsection{Bounds on the eigenfunctions and their derivatives}

In the following lemma, we use elliptic estimates to get bounds on the supremum norm of the eigenfunctions and their gradients in terms of their $L^2$ norm. These bounds follow from local arguments, and while not optimal from a global perspective, they are good enough for our purposes (cf. \cite{donnelly_eigenfunctions_2006}). 

\begin{Lemma}
\label{le:EigFuncBounds}
There is a constant $C = C(n,\kappa,\iota)$ such that for all $(M,g) \in \mathcal{M}(n,\kappa,\iota,V)$ and eigenfunctions $\phi_k$ of the (negative of the) Laplace operator on $M$, with corresponding eigenvalues $\lambda_k$, it holds that for $k \geq k(n,\kappa,\iota,V)$,
\begin{subequations}
\label{eq:EigFuncBounds}
\begin{align}
\|\phi_k\|_\infty &\leq C \lambda_k^{n/4} \| \phi_k \|_2, \\
\|\nabla \phi_k\|_\infty &\leq C \lambda_k^{(n+2)/4} \| \phi_k \|_2.
\end{align}
\end{subequations}
\end{Lemma}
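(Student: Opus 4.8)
We would establish the lemma as follows.

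The plan is to use the harmonic radius to rewrite the eigenvalue equation as a uniformly elliptic divergence-form equation with controlled $C^{1/2}$ coefficients, rescale by the natural length $\lambda_k^{-1/2}$, and then apply standard interior estimates at a fixed scale.

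Fix $p\in M$. By the eigenvalue growth estimate (\ref{eq:EigValLowBound}), there is $k(n,\kappa,\iota,V)$ such that for $k\ge k(n,\kappa,\iota,V)$ we have $\lambda_k\ge r_h^{-2}$, where $r_h=r_h(n,\kappa,\iota,1/2,\sqrt2)$; hence $\rho:=\lambda_k^{-1/2}\le r_h$. Take harmonic coordinates $u\colon B_{r_h}(p)\to\R^n$ with $u(p)=0$ satisfying (\ref{eq:BoundsOng}) for $Q=\sqrt2$, $\alpha=1/2$. In these coordinates $-\Delta\phi_k=\lambda_k\phi_k$ becomes $-\partial_i(\sqrt{\det g}\,g^{ij}\partial_j\phi_k)=\lambda_k\sqrt{\det g}\,\phi_k$, so the rescaled function $\psi(x):=\phi_k(u^{-1}(\rho x))$, defined on $B_1(0)\subset B_{r_h/\rho}(0)$, solves $-\partial_i(\tilde a^{ij}\partial_j\psi)+c\,\psi=0$ on $B_1(0)$ with $\tilde a^{ij}(x)=(\sqrt{\det g}\,g^{ij})(\rho x)$ and $c(x)=-\sqrt{\det g}(\rho x)$. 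By (\ref{eq:BoundsOng}) this equation is uniformly elliptic with ellipticity ratio controlled by $Q$, and $\|c\|_\infty$ together with the $C^{1/2}$ bounds on $\tilde a^{ij}$ over $B_1(0)$ are controlled in terms of $n$ alone (the rescaling only improves the $C^{1/2}$ seminorm, by the factor $\rho^{1/2}\le r_h^{1/2}$).

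The De Giorgi--Nash--Moser local boundedness estimate for such equations gives $\|\psi\|_{L^\infty(B_{1/2}(0))}\le C(n,\kappa,\iota)\,\|\psi\|_{L^2(B_1(0))}$. Undoing the change of variables and using (\ref{eq:BoundsOng}) to compare Lebesgue measure with the Riemannian volume form yields $\|\psi\|_{L^2(B_1(0))}^2\le C(n)\,\rho^{-n}\int_M\phi_k^2\,d\mathrm{vol}_g=C(n)\,\lambda_k^{n/2}\|\phi_k\|_2^2$. Since $|\phi_k(p)|=|\psi(0)|$ and $p$ was arbitrary, the first inequality of (\ref{eq:EigFuncBounds}) follows. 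For the gradient, the right-hand side $-c\psi$ of the equation for $\psi$ is now known to be bounded on $B_{1/2}(0)$ by $C\lambda_k^{n/4}\|\phi_k\|_2$, and the coefficients lie in $C^{1/2}$; interior elliptic (Schauder) estimates then give $\|\nabla\psi\|_{L^\infty(B_{1/4}(0))}\le C(n,\kappa,\iota)\,\lambda_k^{n/4}\|\phi_k\|_2$. Finally, the Euclidean gradient of $\psi$ is $\rho$ times the gradient of $\phi_k$ written in the coordinates $u$, whose norm is comparable to $|\nabla\phi_k|$ by (\ref{eq:BoundsOng}), so $|\nabla\phi_k(p)|\le C\,\rho^{-1}\lambda_k^{n/4}\|\phi_k\|_2=C\,\lambda_k^{(n+2)/4}\|\phi_k\|_2$, the second inequality.

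The only delicate point is the uniformity bookkeeping: one must verify that the ellipticity ratio and the rescaled $C^{1/2}$ bounds on the coefficients, hence the constants in the Moser and Schauder estimates, depend only on $Q=\sqrt2$ and therefore only on $n,\kappa,\iota$, and that the threshold $k(n,\kappa,\iota,V)$ from (\ref{eq:EigValLowBound}) is exactly what makes the rescaled unit ball fit inside the harmonic chart. A coordinate-free alternative uses the semigroup representation $\phi_k=e^{\lambda_k t}\int_M K(p,t;q)\phi_k(q)\,dq$ with $t=\lambda_k^{-1}$: by Cauchy--Schwarz $|\phi_k(p)|\le e\,\|K(p,t;\cdot)\|_2\|\phi_k\|_2$ and $|\nabla\phi_k(p)|\le e\,\|\nabla_pK(p,t;\cdot)\|_2\|\phi_k\|_2$, where $\|K(p,t;\cdot)\|_2^2=K(p,2t;p)$ is bounded by (\ref{eq:DecHeatKernMan}) and $\|\nabla_pK(p,t;\cdot)\|_2$ is bounded by combining (\ref{eq:GradDecHeatKernMan}) with the semigroup identity and the Bishop--Gromov volume bound; this reproduces the powers $\lambda_k^{n/4}$ and $\lambda_k^{(n+2)/4}$.
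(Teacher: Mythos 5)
Your argument is essentially the paper's own proof: both use the eigenvalue growth bound (\ref{eq:EigValLowBound}) to guarantee $\lambda_k^{-1/2}\le r_h$ for $k\ge k(n,\kappa,\iota,V)$, pass to harmonic coordinates satisfying (\ref{eq:BoundsOng}), rescale by $\lambda_k^{-1/2}$, and apply a local $L^\infty$--$L^2$ bound followed by interior Schauder estimates, with the powers $\lambda_k^{n/4}$ and $\lambda_k^{(n+2)/4}$ arising from the Jacobian and chain-rule factors exactly as you describe. The only cosmetic difference is that the paper exploits harmonicity of the coordinates to work with the non-divergence form $g^{ij}\partial_{x^i}\partial_{x^j}\phi_k=\phi_k$ after rescaling, whereas you use the divergence form (and your appended heat-semigroup alternative is an unneeded extra).
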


\begin{proof}
Let $r_h=r_h(n,\kappa,\iota,\alpha=1/2,Q=2)$ be the harmonic radius. 
Let $p\in M$. 
Select harmonic coordinates $u:B_{r_h}(p)\to \R^n$ so that $u(p)=0$ and the metric coefficients $g^{ij}$ with respect to these coordinates satisfy (\ref{eq:BoundsOng}). 
The eigenfunctions $\phi_k$ satisfy $- \Delta \phi_k = \lambda_k \phi_k$ on the manifold. 
By the estimate (\ref{eq:EigValLowBound}) on the growth of the eigenvalues $\lambda_k$ we may now pick $k$ large enough, depending only on $n, \kappa, \iota$ and $V$, such that $\lambda_k \geq 1/r_h^2$.
We introduce coordinates $x = u \sqrt{\lambda_k}$, and write down the equation for $\phi_k$
\[
g^{ij}(x / \sqrt{\lambda_k}) \partial_{x^i}\partial_{x^j} \phi_k = \phi_k, \qquad x \in u(B_{r_h}(p))/r.
\]
Note that $B_{\frac{1}{2}\sqrt{2}}(0) \subset u(B_r(p))$.
Since the equation has bounded coefficients, if $|x| \leq 1/2$,
\[
| \phi_k(x) | \leq C(n) \left( \int_{B_{\frac{1}{2}\sqrt{2}}(0)} |\phi_k(y)|^2 dy \right)^{1/2}.
\]
Consequently, by the elliptic Schauder estimates, for $|x| \leq 1/4$ also
\[
| \nabla \phi_k (x)| \leq C(n) \left( \int_{B_{\frac{1}{2}\sqrt{2}}(0)} |\phi_k(y)|^2 dy \right)^{1/2}.
\]
This implies (\ref{eq:EigFuncBounds}).
\end{proof}

\subsection{Truncation of the heat kernel}

Using the bounds on the eigenfunctions derived in the previous section, we can control the tail of the heat kernel.

\begin{Lemma}
\label{le:TruncHeatKernClose}
Let $M \in \mathcal{M}(n,\kappa,\iota,V)$. Let $\epsilon > 0$ and $t_0>0$ be given. Then there exists $N_0 = N_0(n,\kappa,\iota,V,\epsilon,t_0)$, such that when $N \geq N_0$, for every $t_0 \leq t \leq 4$,
\begin{align}
\label{eq:TruncHeatKernClose}
\| K_N(.,t;q) - K(.,t;q) \|_\infty &< \epsilon,\\
\label{eq:DerTruncHeatKernClose}
\|\nabla K_N(.,t;q) - \nabla K(.,t;q) \|_\infty &< \epsilon.
\end{align}
\end{Lemma}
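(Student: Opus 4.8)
The plan is to estimate the tail of the eigenfunction expansion directly. Write
\[
K(p,t;q) - K_N(p,t;q) = \sum_{k=N+1}^\infty e^{-\lambda_k t}\,\phi_k(p)\,\phi_k(q),
\]
and similarly for the gradient in the first variable. First I would take $N_0$ at least as large as the threshold $k(n,\kappa,\iota,V)$ from Lemma~\ref{le:EigFuncBounds}, so that for every $k > N_0$ the pointwise bounds $\|\phi_k\|_\infty \le C\lambda_k^{n/4}$ and $\|\nabla\phi_k\|_\infty \le C\lambda_k^{(n+2)/4}$ hold (recall $\|\phi_k\|_2 = 1$). Then for $N \ge N_0$ and every $q \in M$,
\begin{align*}
\|K(\cdot,t;q) - K_N(\cdot,t;q)\|_\infty &\le C^2 \sum_{k > N} e^{-\lambda_k t}\,\lambda_k^{n/2}, \\
\|\nabla K(\cdot,t;q) - \nabla K_N(\cdot,t;q)\|_\infty &\le C^2 \sum_{k > N} e^{-\lambda_k t}\,\lambda_k^{(n+1)/2},
\end{align*}
where the termwise differentiation in the second line is justified a posteriori by the uniform convergence of the differentiated series on $M$ for $t>0$, which itself follows from the very bound being proved.

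Next I would exploit $t \ge t_0$ to trade the polynomial factor for extra exponential decay: for any exponent $m > 0$ the function $s \mapsto s^m e^{-s t_0/2}$ is bounded on $[0,\infty)$ by a constant $C(m,t_0)$, so $e^{-\lambda_k t}\lambda_k^m \le e^{-\lambda_k t_0}\lambda_k^m \le C(m,t_0)\,e^{-\lambda_k t_0/2}$. By the eigenvalue growth estimate \pref{eq:EigValLowBound}, using $\Vol(M) \le V$, there is a constant $c = c(n,\kappa,\iota,V) > 0$ with $\lambda_k \ge c\,k^{2/n}$ for all $k$ beyond a geometry-dependent threshold; enlarging $N_0$ if necessary, this holds for $k > N_0$. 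Combining, for $N \ge N_0$ both left-hand sides above are bounded by $C'\sum_{k>N} e^{-c t_0 k^{2/n}/2}$ with $C' = C'(n,\kappa,\iota,V,t_0)$. The series $\sum_k e^{-c t_0 k^{2/n}/2}$ converges (compare with $\int_0^\infty e^{-c t_0 x^{2/n}/2}\,dx$), so its tail tends to $0$ as $N \to \infty$; choosing $N_0$ large enough, depending on $n,\kappa,\iota,V,\epsilon,t_0$, makes both bounds smaller than $\epsilon$. Since the estimate is independent of $q \in M$ and of $t$ with $t \ge t_0$ (in particular for $t_0 \le t \le 4$), this yields \pref{eq:TruncHeatKernClose} and \pref{eq:DerTruncHeatKernClose}.

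I do not expect a genuine obstacle; the argument is routine once Lemma~\ref{le:EigFuncBounds} and \pref{eq:EigValLowBound} are available. The only points needing a little care are that the eigenfunction bounds apply only past a threshold — harmless, since the corresponding low-frequency terms belong to $K_N$ and never enter the tail — and the justification of termwise differentiation, which is immediate from the uniform estimate above. The real content of the lemma is that the truncation index $N_0$ can be chosen uniformly over the class $\mathcal{M}(n,\kappa,\iota,V)$, and this uniformity is already encoded in \pref{eq:EigValLowBound} and Lemma~\ref{le:EigFuncBounds}.
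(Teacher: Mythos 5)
Your proposal is correct and follows essentially the same route as the paper: bound the tail termwise using the eigenfunction estimates of Lemma \ref{le:EigFuncBounds}, trade the polynomial factor $\lambda_k^{(n+1)/2}$ for exponential decay using $t \ge t_0$, and sum via the eigenvalue growth bound \pref{eq:EigValLowBound} to make the tail smaller than $\epsilon$ uniformly in $q$ and $t$.
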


\begin{proof}
Consider the sum
\[
K_{N_1}^{N_2}(p,t;q) := \sum_{k=N_1}^{N_2} e^{-\lambda_k t} \phi_k(p) \phi_k(q).
\]
By the bounds (\ref{eq:EigFuncBounds}) we find that for a constant $C = C(n,\kappa,\iota)$, and $N_1 \geq k(n,\kappa,\iota,V)$,
\[
\left| \nabla K_{N_1}^{N_2}(p,t;q) \right|
\leq \sum_{k=N_1}^{N_2} e^{-\lambda_k t} | \nabla \phi_k(p)||\phi_k(q)|
\leq C \sum_{k=N_1}^{N_2} e^{-\lambda_k t} \lambda_k^{\frac{n+1}{2}}.
\]
Since the eigenvalues are bounded below as in (\ref{eq:EigValLowBound}), for $k\geq k_0(n,\kappa,\iota,V,t)$,
\[
e^{-\lambda_k t} \lambda_k^{\frac{n+1}{2}} \leq e^{-\lambda_k t/2}.
\]
With (\ref{eq:EigValLowBound}), we know that with a constant $c=c(n,\kappa,\iota,V)$,
\[
\sum_{k=N_1}^{N_2} e^{-\lambda_k t/2} \leq \sum_{k=N_1}^{N_2} e^{- c \, k^{2/n} t},
\]
and consequently, there is an $N_0 = N_0(n,\kappa,\iota,V,\epsilon,t_0)$ such that if $N_1 \geq N_0$ then (\ref{eq:DerTruncHeatKernClose}) holds. A similar argument shows that (\ref{eq:TruncHeatKernClose}) holds as well.
\end{proof}

\section{Embedding with heat kernels}
\label{se:HeatTriangulation}

In this section we will prove that manifolds can be embedded with heat kernels. In subsections \ref{suse:maximum} and \ref{suse:euclidean} we will show how the local dilatation can be controlled in case of an embedding into $\R^N$ endowed with the maximum norm and Euclidean norm respectively.

\subsection{Embedding with heat kernels in $\R^N$ with maximum norm}

The next theorem shows that the map $G$ is an embedding, and almost an isometry, for a dense enough net $\{q_i\}_{i=1}^{N_0}$, when the image space $\R^{N_0}$ is endowed with the maximum norm.
\label{suse:maximum}
\begin{Theorem}
\label{Th:EmbHeatKernInf}
For each $\epsilon > 0$, there is a $t_0 = t_0(n,\kappa,\iota,\epsilon)$ such that for all $0< t \leq t_0$ there is a $\delta = \delta(n,\kappa,\iota,\epsilon, t) > 0$ such that for all $M \in \mathcal{M}(n,\kappa,\iota,V)$ and every $\delta$-net $\{q_i\}_{i=1}^{N_0}\subset M$, the map $G$ given by
\begin{equation}
\begin{split}
G(p) &:= 
	(2t)^\frac{n+1}{2} ( K(p,t;q_1), \dots, K(p,t;q_{N_0}) ),
\end{split}
\end{equation}
is an embedding of $M$ into $(\R^{N_0},|.|_\infty)$, such that
\begin{equation}
1- \epsilon < (2\pi)^{\frac{n}{2}}e^{\frac{1}{2}}  |(dG)_p| < 1 + \epsilon.
\end{equation}
In addition, there exists an $N_E= N_E(n,\kappa,\iota,V,\epsilon,t)$ such that whenever $N\geq N_E$, the same statements hold for the map $G^N$, defined by
\begin{equation}
G^N(p) = (2t)^\frac{n+1}{2} (K_N(p,t;q_1), \dots, K_N(p,t;q_{N_0})).
\end{equation}
\end{Theorem}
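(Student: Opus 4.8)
The plan is to work in harmonic coordinates at the scale of the harmonic radius, where by the rescaling of Section~\ref{se:DecCoord} the heat kernel on $M$ is close to the Euclidean heat kernel $\Gamma_E$. The central point is a \emph{heat triangulation} estimate: at a fixed point $p$ with harmonic coordinates $u$ centred at $p$, and for a nearby base point $q$ with $u(q)=z$, the rescaled kernel $\tilde K(x,s;q)$ and its spatial gradient are, for an appropriate choice of $s$ comparable to a small constant, uniformly close in $C^1$ on a fixed coordinate ball to $\Gamma_E(x,s;z)$ (after the matching normalization). This is exactly where parabolic Schauder estimates (Section~\ref{se:SchauderTheory}) enter: the metric coefficients $g_{ij}$ in harmonic coordinates have small $C^{\alpha}$ norm at scale $r_h$, so the rescaled heat equation has coefficients $C^{\alpha}$-close to the constant-coefficient Laplacian, and the fundamental solutions are correspondingly $C^1$-close on interior balls. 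I would fix once and for all a reference scale $s=s(n)$ (or $t_0 = s\, r_h^2$ with $r_h=r_h(n,\kappa,\iota,1/2,\sqrt 2)$), small enough that the Euclidean model on the unit ball is well inside its domain.

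Next I would compute the differential of $G$ at $p$ explicitly. Since $(dG)_p$ sends $v\in T_pM$ to the vector $\bigl((2t)^{(n+1)/2}\langle \nabla K(p,t;q_i),v\rangle\bigr)_{i=1}^{N_0}$, its operator norm into $(\R^{N_0},|\cdot|_\infty)$ is $\max_i (2t)^{(n+1)/2} |\nabla_p K(p,t;q_i)|_{\mathrm{op}}$, i.e.\ the largest directional derivative magnitude over all base points. For the Euclidean model, a direct calculation gives $\nabla_x \Gamma_E(x,t;y) = -\frac{x-y}{2t}\Gamma_E(x,t;y)$, so at $x=0$ the supremum over $|y|\le $ (some radius) of $|\nabla_x \Gamma_E(0,t;y)|$, after the normalization $(2t)^{(n+1)/2}$, tends as the net gets dense to $(2\pi)^{-n/2} e^{-1/2}$, attained in the limit $|y|^2/(2t)\to 1$. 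Matching normalizations, this yields $(2\pi)^{n/2}e^{1/2}|(dG)_p|\to 1$. The density requirement on the net is precisely what ensures there is always a point $q_i$ with $u(q_i)$ essentially achieving the maximizing direction and length; given $t$, the Schauder-closeness is quantitative, so one gets an explicit $\delta=\delta(n,\kappa,\iota,\epsilon,t)$.

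To upgrade the dilatation estimate to an embedding (injectivity plus immersion), I would argue as follows. The immersion property is immediate once $|(dG)_p|$ is bounded away from $0$, which the displayed two-sided bound gives. For injectivity, suppose $G(p)=G(p')$. If $p,p'$ lie in a common harmonic-coordinate ball at scale $r_h$, the $C^1$-closeness to $\Gamma_E$ forces the coordinate difference to be small: evaluating the (nearly Euclidean) kernels from a base point $q_i$ near $p$ shows $K(p,t;q_i)$ is strictly larger than $K(p',t;q_i)$ unless $u(p)$ and $u(p')$ are very close, and then the lower dilatation bound integrated along a path gives $p=p'$. If $p,p'$ are far apart (distance $\gtrsim r_h$), one uses the off-diagonal decay \eqref{eq:DecHeatKernMan}: pick $q_i$ within $\delta$ of $p$; then $K(p,t;q_i)$ is of order $t^{-(n+1)/2}$ times an $O(1)$ constant, while $K(p',t;q_i)$ is exponentially smaller because $d(p',q_i)^2/t$ is large, contradicting $G(p)=G(p')$. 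This two-regime argument is the standard Gromov-type separation packaged with the heat kernel.

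The main obstacle — and the reason the $t_0$ must be chosen before $\delta$ — is making the $C^1$ comparison between $\tilde K$ and $\Gamma_E$ genuinely uniform over all of $\mathcal M(n,\kappa,\iota,V)$ and over all admissible base points $q$, with the Schauder constants depending only on $n$ (via the $Q=\sqrt2$, $\alpha=1/2$ bounds on $g_{ij}$ at the harmonic scale). One must handle both the short-time interior estimate near $p$ and, separately, the matching of the global heat kernel with its localized/Euclidean model away from the base point; the rescaled decay bounds \eqref{eq:DecCoord} and \eqref{eq:GradDecCoord} are what allow the cutoff errors to be absorbed. Finally, the extension to $G^N$ is routine: by Lemma~\ref{le:TruncHeatKernClose}, for $N\ge N_E(n,\kappa,\iota,V,\epsilon,t)$ the truncated kernel and its gradient are within $\epsilon'$ (in sup norm) of the full kernel uniformly for $t_0\le t\le 4$, so $G^N$ and $(dG^N)_p$ are $C^0$-close to $G$ and $(dG)_p$; shrinking $\epsilon'$ relative to $\epsilon$ preserves both the dilatation bounds and, via the same two-regime separation now applied with a controlled perturbation, the embedding property.
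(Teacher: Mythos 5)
Your overall architecture coincides with the paper's: rescale in harmonic coordinates at the scale $r=\sqrt{2t}$ below the Anderson--Cheeger harmonic radius, use the parabolic Schauder results of Section~\ref{se:SchauderTheory} (Lemma~\ref{le:CloseToEucl}) to get $C^1$-closeness of the rescaled kernel to $\Gamma_E$, use off-diagonal decay to discard far-away base points, and invoke Lemma~\ref{le:TruncHeatKernClose} for the truncated maps $G^N$; the constant $(2\pi)^{-n/2}e^{-1/2}$ is identified exactly as in the paper, and your far-apart separation via \eqref{eq:DecHeatKernMan} is a legitimate variant of the paper's use of \eqref{eq:ChoiceR1} together with the maximum principle.

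There is, however, a genuine gap in how you pass from the dilatation bound to the embedding. The quantity you compute, $|(dG)_p|=\max_i (2t)^{(n+1)/2}|\nabla_p K(p,t;q_i)|$, is only the operator norm of $dG_p$ into $(\R^{N_0},|\cdot|_\infty)$; bounding it away from zero says nothing about injectivity of $dG_p$, since a priori all the gradients $\nabla_p K(p,t;q_i)$ could be nearly parallel. So the step ``the immersion property is immediate once $|(dG)_p|$ is bounded away from $0$'' is invalid as stated, and for the same reason ``the lower dilatation bound integrated along a path'' cannot give injectivity on a coordinate ball of definite size: a pointwise immersion bound only gives local injectivity on uncontrolled neighbourhoods. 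What is needed, and what the paper proves, is a per-direction statement: for every unit $v\in T_pM$ the $\delta$-net contains a point $q_{k_0}$ with $|r^{-1}u(q_{k_0})-\hat v|<2\delta$, whence $v\cdot\nabla K(p,t;q_{k_0})\geq Q^{-1}(1-C(n)(\sigma+\delta))(2\pi)^{-n/2}e^{-1/2}r^{-(n+1)}$; and for injectivity on a ball of definite coordinate size one selects $n$ net points near the rescaled unit vectors and uses \eqref{eq:CloseToUnitVec} to keep the corresponding gradients uniformly close to fixed multiples of the $e_j$ throughout $B_{4\rho}$, so the map is a small perturbation of a fixed invertible linear map there. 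Your $C^1$-closeness does furnish these facts, but they must be invoked for all directions simultaneously rather than deduced from the operator norm. Two smaller quantitative points: for $q_i$ within $\delta$ of $p$ one has $K(p,t;q_i)\sim t^{-n/2}$, not $t^{-(n+1)/2}$; and in the intermediate regime (coordinate distance between a few multiples of $\rho$ and the coordinate radius) a single-kernel comparison separates points only if the Schauder error $\sigma$ is chosen small relative to the variation of $\Gamma_E$ over that range --- the paper packages this as the separation constant $d_1(n)$ of the $(n+1)$-component map $\Phi_E$ and fixes $\sigma,\delta,\rho$ in the right order, which your sketch would need to spell out.
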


The proof will be divided into three different steps. Step 1 is quite technical, and is mainly to clarify what the various constants depend on. In it, we select a scale $r = \sqrt{2t}$, and a large factor $\tilde{R}$, such that:
\begin{itemize}
\item On $B_{r\tilde{R}}(p)$, the heat kernel is close enough to the Euclidean heat kernel by results from Section \ref{se:SchauderTheory};
\item Fundamental solutions from points outside $B_{r \tilde{R}}(p)$ have very small gradients, so that they do not influence $|(dG)_p|$;
\item Fundamental solutions from points inside $B_{2r}(p)$ are small outside $B_{r\tilde{R}}(p)$, which we need to show that $G$ is one-to-one. 
\end{itemize}
With Euclidean heat kernels, we can do calculations explicitly, and it is clear how fundamental solutions yield an embedding. We show in Step 2 how to use the $C^1$-closeness to conclude that $G$ is an embedding. In Step 3 we estimate $|(dG)_p|$.

\begin{proof}

\textbf{Step 1: find an appropriate scale $r>0$.}

\noindent Let $\epsilon > 0$ be given. Set $\alpha = 1/2$. Pick $1 < Q < \sqrt{2}$ close enough to $1$, less than $Q_1(n,\epsilon)$ below, and such that 
\begin{equation}
2(Q-1)C(n,\alpha) \leq \sigma,
\end{equation}
where $C(n,\alpha)$ is as in Lemma \ref{le:CloseToEucl}, and $\sigma$ is less than $\sigma_1(n)$, $\sigma_2(n)$ and $\sigma_3(n,\epsilon)$, that are specified later.
Pick also $R_0=R(n,\alpha,C_d(n), Q)$ as in Lemma \ref{le:CloseToEucl}, where $C_d(n)$ is defined in (\ref{eq:DecCoord}).

Choose $R_1 = R_1(n)$ such that for $\Gamma$ satisfying the decay in (\ref{eq:DecCoord}), and $|y| < 2$, $|x| > R_1$, and $1/2\leq s \leq 2$,
\begin{equation}
\label{eq:ChoiceR1}
\Gamma(x,s;y) \leq \frac{1}{10} \inf_{\substack{z \in B_2 \\ 1/2\leq \tau \leq 2}} \Gamma_E(z,\tau;e_1),
\end{equation}
where $e_i$ $(i=1,\dots,n)$ stands for the $i$th standard unit vector in $\R^n$, with a $1$ in the $i$th coordinate and zeros in the remaining.

Let $R_2 = R_2(n)$ be large enough such that for every $\tilde{r} < r_h = r_h(n,\kappa,\iota,\alpha = 1/2, Q)$, if $d(p,q) > R_2 \tilde{r}$,
\begin{equation}
\label{eq:GradEstimate}
\tilde{r}^{n+1} \left| \nabla K\left(p, \tfrac{\tilde{r}^2}{2}; q \right)\right| < \frac{1}{2} (2 \pi)^{-\frac{n}{2}} e^{-\frac{1}{2}}.
\end{equation}

Define the scale $r_0$ by
\begin{equation}
r_0 := \frac{r_h(n,\kappa,\iota,\alpha = 1/2, Q) }{2 \max(R_0,R_1,R_2)},
\end{equation}
and set $t_0 := r_0^2/2$. Set $r \leq r_0$, and $t = r^2 /2$.\\

\noindent \textbf{Step 2: Show that the map is an embedding.}

\noindent Now suppose $q_1, \dots, q_{N_0}$ is a $\delta r$-net on $M$, where $\delta$ is less than $\delta_1(n)$, $\delta_2(n)$, and $\delta_3(n,\epsilon)$ specified later. Let $p \in M$, and choose harmonic coordinates $u: B_{r_h}(p) \to \R^n$, that satisfy (\ref{eq:BoundsOng}) and $u(p) = 0$. 
Denote $y_i := r^{-1} u(q_i)$ whenever $q_i \in B_{r_h}(p)$. 
Note that $B_{r_h/\sqrt{Q}} \subset u(B_{r_h}(p))$.

Introduce the rescaled heat kernel
\[
\tilde{K}( x, s; q ) := r^n K( u^{-1} (x r), s r^2; q ),
\]
and
\[
\Gamma(x, s; y ) := r^n K( u^{-1} (x r), s r^2; u^{-1}(y r) ).
\]

We will want to use the results on fundamental solutions of parabolic equations as presented in Section \ref{se:SchauderTheory}. Note that, because the coordinates $u$ are harmonic, $\Gamma$ is a fundamental solution to the operator $L$, defined by
\[
L u = u_s - a^{ij} \partial_{x^i}\partial_{x^j} u,
\]
on the domain $B_R(0)$, where
\[
R = \frac{2}{\sqrt{Q}} \max(R_0, R_1, R_2),
\]
and $a^{ij}(x) = g^{ij}(r x)$, so that the coefficients $a^{ij}$ satisfy (\ref{eq:BoundsOna}). 
Moreover, as explained in Section \ref{se:DecCoord}, $\Gamma$ satisfies the exponential decay estimate (\ref{eq:DecCoord}). 
By Lemma \ref{le:CloseToEucl} and our choice of $Q$ and $R \geq R_0$, we know that for $y \in B_R$, $x \in B_2$,
\begin{equation}
\label{eq:GammaCloseGammaE}
|\nabla \Gamma(x,s;y) - \nabla \Gamma_E(x,s;y) | < \sigma,
\end{equation}
for $1/2 \leq s \leq 2$. Moreover, for $y \in B_2$ and $x \in B_R$,
\begin{equation}
|\Gamma(x,s;y) - \Gamma_E(x,s;y) | < \sigma.
\end{equation}

We calculate
\[
\nabla \Gamma_E(x,s;y) = - \frac{1}{2s} (x - y) \Gamma_E(x,s;y).
\]
Since the $q_i$ form a $(\delta r)$-net, for every $j = 1, \dots, n$, there is an index $i_j$ such that $|y_{i_j} - e_j| \leq 2\delta$, where $e_j$ is the $j$th standard unit vector in $\R^n$. 
Then, when $|x|<4\rho$,
\[
\left| \nabla \Gamma_E(x,s;y_{i_j}) - \frac{1}{2s} e_j \Gamma_E(0,s;e_1) \right| \leq C(n) (\delta + \rho).
\]
Hence, also
\begin{equation}
\label{eq:CloseToUnitVec}
\left| \nabla \Gamma(x,s;y_{i_j}) - \frac{1}{2s} e_j \Gamma_E(0,s;e_1)\right| \leq C(n) (\delta + \rho) + \sigma.
\end{equation}

Therefore, when $\sigma < \sigma_1(n)$, $\delta < \delta_1(n)$, and $\rho < \rho_1(n)$, the ball $B_{4\rho}$ is embedded by the map
\begin{equation}
x \mapsto \left( \Gamma(x,t; y_{i_1}) , \dots, \Gamma(x,t; y_{i_n} ) \right).
\end{equation}
Set
\begin{equation}
\Phi_E: x \mapsto \left(\Gamma_E(x,s;0), \Gamma_E(x,s;e_1), \Gamma_E(x,s;e_2), \dots, \Gamma_E(x,s;e_n) \right),
\end{equation}
and note that there is a distance $d_1(n) > 0$ between the sets $\{ \Phi_E(y) \, | \, y \in B_\rho(0) \}$ and $\{ \Phi_E(y) \, | \, y \in \R^n \backslash B_{2\rho}(0)  \}$ in $\R^n$. 
Let $y_{i_0}$ be such that $|y_{i_0} | < 2\delta$, and consider the map $\Phi$ given by
\begin{equation}
\Phi: x \mapsto \left(\Gamma(x,s;y_{i_0}), \Gamma(x,s;y_{i_1}), \Gamma(x,s;y_{i_2}), \dots, \Gamma(x,s;y_{i_n}) \right).
\end{equation}
Since for every $x \in B_R$,
\[
|\Phi(x) - \Phi_E(x)| \leq C(n)(\sigma + \delta),
\]
choosing $\sigma < \sigma_2(n)$, $\delta < \delta_2(n)$, such that $C(n)(\sigma + \delta) < d_1(n)/4$, we guarantee that if for some $x \in B_R$, $y \in B_\rho$, it holds that $\Phi(x) = \Phi(y)$, then $x = y$. By the choice of $R_1$, (\ref{eq:ChoiceR1}), and the maximum principle, if $G(\tilde{q}) = G(q)$ for some $q \in B_{\rho r}(p)$ then $\tilde{q} = q$. Since $p$ was an arbitrary point on $M$, it follows that $G$ is an embedding. 

If the heat kernel is replaced by the truncated heat kernel $K_N$, by (\ref{eq:CloseToUnitVec}), the same argument still works to prove that $G^N$ is an embedding. Indeed, define
\[
\Gamma^N(x,t;y) := r^n K_N\left( u^{-1}(x r), s r^2; u^{-1}(y r) \right).
\]
By Lemma \ref{le:TruncHeatKernClose}, for $N$ large enough, the analogue of (\ref{eq:CloseToUnitVec}) still holds,
\[
\left| \nabla \Gamma^N(x,s;y_{i_j}) - \frac{1}{2s} e_j \Gamma_E(0,s;e_1)\right| \leq C(n) (\delta + \rho) + \sigma.
\]

\noindent \textbf{Step 3: Control the dilatation.}

\noindent Let $v \in T_p(M)$, $|v| = 1$. 
First note that if we fix $s=1/2$, the function $(x,y) \mapsto |\nabla \Gamma_E(x,s;y)|$ is maximized if $|x-y|=1$ and its maximal value is
\[
\left|\nabla \Gamma_E\left(x,\tfrac{1}{2};y\right)\right| = (2\pi)^{-\frac{n}{2}} e^{-\frac{1}{2}}.
\]
By (\ref{eq:GammaCloseGammaE}) and by the definition of $R_2$, it follows with (\ref{eq:GradEstimate}) that for all $q \in M$,
\[
\left|\nabla \tilde{K} \left(x,\tfrac{1}{2};q\right)\right| \leq (2\pi)^{-\frac{n}{2}} e^{-\frac{1}{2}} (1+ \sigma).
\]
Consequently, for all $q \in M$,
\[
|\nabla K(p,t;q) | \leq Q \frac{(2\pi)^{-\frac{n}{2}} e^{-\frac{1}{2}}(1 + \sigma)}{t^{(n+1)/2}}.
\]

Now we express $v = v^j \partial_{u^j}$ and write $\hat{v} = (v^1, v^2, \dots, v^n)$.
Because the points $q_i$ form a $(\delta r)$-net, there is an index $k_0$ such that $| \hat{v} - y_{k_0} | < 2 \delta$. 
It follows that
\[
\begin{split}
\hat{v} \cdot \nabla \Gamma\left(0,\tfrac{1}{2};y_{k_0}\right) 
&\geq \hat{v} \cdot \nabla \Gamma_E\left(0,\tfrac{1}{2};y_{k_0}\right) - \sigma |\hat{v}| \\
& \geq |\hat{v}|\, (2\pi)^{-\frac{n}{2}} e^{-\frac{1}{2}}(1-C(n) (\sigma + \delta)).
\end{split}
\]
Therefore,
\[
v \cdot \nabla K\left(p,\frac{r^2}{2};q_{k_0}\right) \geq \frac{1}{Q} \frac{(2\pi)^{-\frac{n}{2}} e^{-\frac{1}{2}}(1-C(n)(\sigma+\delta))}{r^{n+1}}.
\]

As $t = r^2/2$, with $1<Q< Q_1(n,\epsilon)$, $\delta < \delta_3(n,\epsilon)$, $\sigma < \sigma_3(n,\epsilon)$, we find that the map $G$,
\[
G(p) = (2t)^{\frac{n+1}{2}}  ( K(p, t; q_1), \dots, K(p,t; q_{N_0}) ),
\]
is actually an embedding in $\R^{N_0}$ endowed with the maximum norm, such that for every $p \in M$,
\[
1 - \epsilon < (2\pi)^{\frac{n}{2}} e^{\frac{1}{2}} |(dG)_p| < 1 + \epsilon.
\]
Clearly, truncating the heat kernel at large enough $N\geq N_E(n,\kappa,\iota,V,\epsilon,t)$ yields by Lemma \ref{le:TruncHeatKernClose}
\[
1 - \epsilon < (2\pi)^{\frac{n}{2}} e^{\frac{1}{2}} |(dG^N)_p| < 1 + \epsilon.
\]
\end{proof}

It is illustrative to consider a limit case of the map $G$. That is, let $q_i$, $i=1,2,\dots$ be dense in $M$. Then the map $\tilde{G}:M\to\ell^\infty$ defined by $(\tilde{G}(p))_i=(2\pi)^{\frac{n}{2}}e^{\frac{1}{2}} (2t)^{(n+1)/2} K(p,t;q_i)$ is an embedding for $t$ small enough, and as $t \downarrow 0$, the map becomes closer to an isometry. The map $\tilde{G}$ is similar to the Kuratowski embedding $I$, given by $I(p)_i = d(p,q_i)$, which is an exact isometry.

\subsection{Embedding in Euclidean space}
\label{suse:euclidean}

In the previous section, we observed that whenever points $q_i$ form a dense enough net, the map $G$ consisting of heat kernels from the points $q_i$ is an embedding for some $t$ that is almost an isometry if the image space is endowed with the maximum norm. 

Even for very dense nets, the map $G$ may not give rise to an almost isometry when we map into Euclidean space. The next theorem shows that we do get almost an isometry when we weigh the heat kernels from the different points differently.

\begin{Theorem}
\label{Th:EmbDiffWeight}
Let $\epsilon > 0$. Then, there exists a $t_0 = t_0(n,\kappa,\iota,\epsilon)$ such that for all $0<t<t_0$ there exists an $\delta = \delta(n,\kappa,\iota,\epsilon,t)$ such that for all $(M,g) \in \mathcal{M}(n,\kappa,\iota,V)$ and $\delta$-net $q_1, \dots, q_{N_0}$ on $M$, the map
\begin{equation}
H(p) := (2t)^{\frac{n+2}{4}} \frac{1}{V_e} \left(|A_1|^{1/2} K(p,t;q_1), \dots, |A_{N_0}|^{1/2} K(p,t;q_{N_0}) \right),
\end{equation}
is an embedding of $M$ into $\R^{N_0}$ satisfying for all $p \in M$,
\begin{equation}
1 - \epsilon <  |(d H)_p| < 1 + \epsilon.
\end{equation}
Here, $\{A_i\}_{i=1}^{N_0}$ is a partition of $M$ such that for all $i$, $A_i \subset B_{\delta}(q_i)$, and 
\begin{equation}
V_e := \left( \int_{\R^n} (\partial_{x_1} \Gamma_E(0,1/2;y) )^2 dy \right)^{1/2}
= \frac{1}{\sqrt{2} (4\pi)^\frac{n}{4}}.
\end{equation}
Moreover, there exists an $N_E= N_E(n,\kappa,\iota,V,\epsilon,t)$ such that whenever $N\geq N_E$, the same statements hold for the map $H^N$, which is the map $H$ except with the heat kernel $K$ replaced by the truncated version $K_N$,
\begin{equation}
H^N(p) := (2t)^{\frac{n+2}{4}} \frac{1}{V_e} \left(|A_1|^{1/2} K_N(p,t;q_1), \dots, |A_{N_0}|^{1/2} K_N(p,t;q_{N_0}) \right).
\end{equation}
\end{Theorem}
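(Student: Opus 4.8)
The plan is to follow the same three-step scheme as in the proof of Theorem \ref{Th:EmbHeatKernInf}, reusing Step 1 and Step 2 essentially verbatim — the choice of scale $r = \sqrt{2t}$, the large radius $\tilde R$, and the argument that $G$ (hence also $H$, which differs only by positive weights and an overall constant) is injective do not depend on how we normalize the components. So the only genuinely new content is Step 3: controlling $|(dH)_p|$ in the \emph{Euclidean} norm rather than the maximum norm. First I would fix $p \in M$, pick harmonic coordinates $u : B_{r_h}(p) \to \R^n$ with $u(p) = 0$ satisfying (\ref{eq:BoundsOng}), and write, for a unit vector $v = v^j \partial_{u^j} \in T_p M$ with $\hat v = (v^1,\dots,v^n)$,
\[
v \cdot (dH)_p = (2t)^{\frac{n+2}{4}} \frac{1}{V_e}\sum_i |A_i|^{1/2}\, r^{-(n+1)}\,\hat v \cdot \nabla_x \tilde K\!\left(0,\tfrac12;q_i\right),
\]
so that
\[
|v \cdot (dH)_p|^2 = \frac{(2t)^{\frac{n+2}{2}}}{V_e^2\, r^{2(n+1)}} \sum_i |A_i| \left(\hat v \cdot \nabla_x \tilde K\!\left(0,\tfrac12;y_i\right)\right)^2,
\]
where $y_i = r^{-1}u(q_i)$ when $q_i \in B_{r_h}(p)$. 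The point is that this sum is a Riemann sum: since $\{A_i\}$ is a partition of $M$ with $A_i \subset B_\delta(q_i)$, and since $|A_i|$ in the manifold measure differs from the Euclidean measure of $u(A_i)/r$ by a factor $Q^{\pm n/2}$ close to $1$, the sum approximates $r^{-n}\int_{\R^n}(\hat v \cdot \nabla_x \Gamma_E(0,\tfrac12;y))^2\,dy$, up to errors controlled by $\delta$, $\sigma$, and $Q-1$.

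The key steps, in order, are: (i) invoke Steps 1 and 2 of the previous proof to fix the scale and establish injectivity of $H$ (and of $H^N$ via Lemma \ref{le:TruncHeatKernClose}); (ii) use the $C^1$-closeness estimate (\ref{eq:GammaCloseGammaE}) and the exponential decay (\ref{eq:GradDecCoord}) to replace $\nabla_x \tilde K(0,\tfrac12;q_i)$ by $\nabla_x \Gamma_E(0,\tfrac12;y_i)$ inside the sum, with an error that is small uniformly in $v$ once $\tilde R$ is large enough that the tail $\sum_{q_i \notin B_{r\tilde R}(p)}$ is negligible; (iii) compare the weighted sum $\sum_i |A_i|(\hat v \cdot \nabla_x \Gamma_E(0,\tfrac12;y_i))^2$ to the integral $\int_{\R^n}(\hat v \cdot \nabla_x \Gamma_E(0,\tfrac12;y))^2\,dy$, using that $y \mapsto (\hat v \cdot \nabla_x \Gamma_E(0,\tfrac12;y))^2$ is Lipschitz and rapidly decaying, so that Riemann-sum convergence holds with a rate in $\delta$, and correcting the measure discrepancy $||A_i| - |u(A_i)/r|_{\mathrm{eucl}}|$ by $(Q-1)$-type bounds; (iv) evaluate the integral: by rotational symmetry of $\Gamma_E$ and the identity $\nabla_y\Gamma_E(0,s;y) = -\tfrac{1}{2s}y\,\Gamma_E(0,s;y)$, one has $\int_{\R^n}(\hat v\cdot\nabla_x\Gamma_E(0,\tfrac12;y))^2\,dy = |\hat v|^2\int_{\R^n}(\partial_{x_1}\Gamma_E(0,\tfrac12;y))^2\,dy = |\hat v|^2 V_e^2$, and $|\hat v|$ is comparable to $|v|=1$ up to $Q^{\pm 1/2}$; (v) track the powers of $r$ and $t$: with $t = r^2/2$ the prefactor $(2t)^{(n+2)/2}/(V_e^2 r^{2(n+1)}) = r^{n+2}/(V_e^2 r^{2n+2}) = 1/(V_e^2 r^n)$ exactly cancels the $r^{-n}$ from the Riemann sum, leaving $|v\cdot(dH)_p|^2$ within $\epsilon$ of $1$ after choosing $Q-1$, $\sigma$, $\delta$ small; (vi) take supremum over unit $v$ to get $|(dH)_p|$, and finally invoke Lemma \ref{le:TruncHeatKernClose} once more to transfer everything to $H^N$ for $N \geq N_E$.

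The main obstacle is step (iii)–(iv): one must show the weighted discrete sum genuinely converges to the Gaussian integral \emph{uniformly in the direction $v$} and with the errors expressible solely in terms of the already-chosen parameters $n$, $\epsilon$, $t$ (through $\delta$, $\sigma$, $Q$). The subtlety is that the sum ranges over \emph{all} of $M$, not just $B_{r\tilde R}(p)$, so one needs the exponential decay (\ref{eq:DecCoord})/(\ref{eq:GradDecCoord}) together with the volume bound $V$ and Bishop–Gromov to control the number of net points at each distance scale and conclude that the contribution of points with $|y_i| > \tilde R$ to $\sum_i |A_i|(\hat v\cdot\nabla\Gamma_E)^2$ is $O(e^{-c\tilde R^2})$, matching the analogous Gaussian tail $\int_{|y|>\tilde R}(\partial_{x_1}\Gamma_E)^2$. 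Once that uniform tail control is in place, the interior comparison is a standard Riemann-sum estimate whose rate is linear in $\delta$ — routine, but it is where all the constants must be pinned down, and it is the reason the statement is phrased with $t_0$ depending only on $n,\kappa,\iota,\epsilon$ and then $\delta$ depending additionally on $t$.
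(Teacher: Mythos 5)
Your plan is essentially the paper's own proof: fix the scale and injectivity exactly as in Theorem \ref{Th:EmbHeatKernInf}, rescale in harmonic coordinates so that $|(dH)_p(v)|^2$ becomes $\frac{1}{V_e^2}\sum_i \frac{|A_i|}{r^n}(\hat v\cdot\nabla\tilde K(0,\tfrac12;q_i))^2$, treat this as a Riemann sum for $\int_{\R^n}(\hat v\cdot\nabla\Gamma_E(0,\tfrac12;y))^2dy=|\hat v|^2V_e^2$ using the $C^1$-closeness of Lemma \ref{le:CloseToEucl}, the modulus of continuity of $\nabla\Gamma_E$, and $(Q-1)$-control of the measure distortion, with the tails handled by the Gaussian decay, and finally pass to $H^N$ via Lemma \ref{le:TruncHeatKernClose}. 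One small caution: for the far tail you should not invoke the volume bound $V$ (nor count net points) — since the $A_i$ partition $M$, the tail sum is directly comparable to the integral $\int_{M\setminus B_{r_h/2}(p)}|\nabla K(p,t;q)|^2dq$, which by Bishop--Gromov versus the Gaussian decay is small independently of $V$, and this is what preserves the claimed dependence $t_0=t_0(n,\kappa,\iota,\epsilon)$ and $\delta=\delta(n,\kappa,\iota,\epsilon,t)$.
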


The proof of this theorem has great similarities with that of Theorem \ref{Th:EmbHeatKernInf} in the previous section. In Step 2, we calculate $|(dH)_p|$, which now represents the operator norm as a map to Euclidean space. The proof uses the observation that in Euclidean space $\R^n$, the integral
\begin{equation}
\int_{\R^n} \left(v \cdot \nabla \Gamma_E(0,1/2,y)\right)^2 dy
\end{equation}
is independent of the direction of $v$. In the proof we select points and weights such that the integral is approximated well by a (Riemann) sum. For that, again we need that the heat kernel is close to the Euclidean heat kernel on a ball $B_{r \tilde{R}}(p)$, and that the contribution from heat kernels from points outside this ball is small by the exponential decay of the gradient of the heat kernel. We introduce the necessary estimates in Step 1.

\begin{proof} \textbf{Step 1: Determine a scale $r>0$.}\\
\noindent Let $\epsilon > 0$ be given. Let $R_1 = R_1(n,\epsilon)$ be a radius such that for every $\Gamma$ satisfying the gradient decay (\ref{eq:GradDecCoord}), $1/2 \leq s \leq 2$,
\begin{equation}
\label{eq:IntTailDec}
\int_{\R^n \backslash B_{R_1}(0)} \left|\nabla \Gamma(0,s;y) \right|^2 dy < \epsilon,
\end{equation}
and moreover, if $\{B_i\}_i$ is a partition of $\R^n$, such that the diameter of every $B_i$ is less than $1$,
\begin{equation}
\label{eq:IntTailDecPart}
\sum_{i, B_i \nsubseteq B_{R_1}} \sup_{y \in B_i} |\nabla \Gamma(0,s; y)|^2 |B_i| < \epsilon.
\end{equation}

Let $\sigma > 0$, be less than $\sigma_1(n,\epsilon)$ to be determined later. Set $\alpha = 1/2$. 
Pick $1 < Q < \sqrt{2}$ close enough to $1$. 
To be precise, such that $Q-1 < \epsilon$ and 
\begin{equation}
2(Q-1)C(n,\alpha) \leq \sigma,
\end{equation}
where $C(n,\alpha)$ is as in Lemma \ref{le:CloseToEucl}. 
Select also $R_0 = R(n,\alpha, C_d(n), Q)$ as in Lemma \ref{le:CloseToEucl}, where $C_d(n)$ is the constant in (\ref{eq:DecCoord}).

Set $r_h := r_h(n,\kappa,\iota,\alpha = 1/2,Q)$. 

Let moreover $0< r_3=r_3(n,\kappa,\iota,\epsilon) <r_h/2$ be small enough such that for $t< 2 r_3^2$,
\begin{equation}
\label{eq:NoEffRestMan}
(2t)^{\frac{n+2}{2}} \int_{M\backslash B_{r_h/2}(p)} \left| \nabla K(p,t;q)\right|^2 dq < \epsilon.
\end{equation}
At first sight, the value of $r_3$ may seem to depend on $V$, yet by the Bishop-Gromov inequality, the volume of the manifold $M$ grows at most exponentially with the distance, while the heat kernel decays with the exponential of the distance squared.

Now set 
\begin{equation}
r_0 := \min\left(\frac{r_h}{R_0},\frac{r_h}{R_1}, r_3\right).
\end{equation}
Let $r < r_0$ and $t = r^2/2$. Note that the function $\nabla \Gamma_E(0,s; .)$ is uniformly continuous, with modulus of continuity $\omega$, say. 
Given $\epsilon$, let $\delta$ be smaller than $r \omega(\epsilon)/2$ and let $\{q_i\}_{i = 1}^{N_0}$, be a $\delta$-net.
% Note that $N \leq N_0(n,\kappa,\iota,V,h)$. 
Now partition the manifold $M$ into $N_0$ sets $A_i \subset B_\delta(q_i)$. 

\noindent \textbf{Step 2: Control the dilatation}

\noindent Let $p \in M$ and let $u:B_{r_h}(p) \to \R^n$ be harmonic coordinates with $u(p) = 0$, satisfying (\ref{eq:BoundsOng}). Define
\[
\tilde{K}(x,s; q) := r^n K( u^{-1} (x r),  s r^2 ; q),
\]
and
\[
\Gamma(x,s;y) := r^n K\left(u^{-1} (x r), s r^2; u^{-1}(y r) \right).
\]
Therefore, if $ v \in T_p(M)$, $|v| = 1$, $v = v^j \partial_{u^j}$, and $\hat{v}=(v^1,\dots,v^n)$,
\begin{equation}
\label{eq:CalcdH}
\begin{split}
|(dH)_p (v) |^2 &= \frac{1}{V_e^2} r^{n+2} 
\sum_{i = 1}^{N_0} (v \cdot \nabla {K}(p,t;q_i))^2 |A_i| \\
&= \frac{1}{V_e^2} \frac{r^{2n+2}}{\left(r^{n+1}\right)^2} 
\sum_{i=1}^{N_0} (\hat{v} \cdot \nabla \tilde{K}(0,\tfrac{1}{2};q_i))^2 \frac{|A_i|}{r^{n}} \\
&= \frac{1}{V_e^2} \sum_{i \in I_{R_1 r}(p)} \left(\hat{v} \cdot \nabla \tilde{K}\left(0,\tfrac{1}{2};q_i\right)\right)^2 \frac{|A_i|}{r^{n}} \\ 
&\qquad  + \frac{1}{V_e^2} \sum_{i \notin I_{R_1 r}(p)} \left(\hat{v} \cdot \nabla \tilde{K}\left(0,\tfrac{1}{2};q_i\right)\right)^2 \frac{|A_i|}{ r^{n}},
\end{split}
\end{equation}
where for each $\tilde{r}>0$ and $\tilde{p}\in M$, $I_{\tilde{r}}(\tilde{p})$ denotes the subset of $\{1, \dots, N_0\}$ such that $A_j \cap B_{\tilde{r}}(p) \neq \emptyset$. We will first show the first term is close to $1$, after which we will prove that the last term is small.

Note that $B_R \subset u(B_{r_h}(p))/r$, where 
\[
R := \frac{2}{\sqrt{Q}}\max(R_0,R_1).
\]
By Lemma \ref{le:CloseToEucl} and our choice of $Q$ and $R_0$, for all $y \in B_R(0)$, $x \in B_2(0)$,
\[
\left| \nabla\Gamma\left(x,\tfrac{1}{2};y\right) 
	- \nabla\Gamma_E\left(x,\tfrac{1}{2};y\right) \right| < \sigma,
\]
which implies that
\begin{multline*}
\left| \sum_{i \in I_{R_1 r}(p)} 
  \left(\hat{v} \cdot \nabla \Gamma\left(0,\tfrac{1}{2};y_i\right) \right)^2 
  	\frac{|A_i|}{r^{n}} 
  - \sum_{i \in I_{R_1 r}} 
  	  \left(\hat{v} \cdot \nabla \Gamma_E\left(0,\tfrac{1}{2};y_i\right)\right)^2
  	  \frac{|A_i|}{r^{n}} \right| \\
< C(n) \sigma \sum_{i \in I_{R_1 r}(p)} \frac{|A_i|}{r^n}
< C(n) \sigma |B_{R_1}(0)|,
\end{multline*}
where $y_i := r^{-1} u(q_i)$ if $q_i \in B_{r_h}(p)$.
By choosing $\sigma < \sigma_1(n,\epsilon)$, we can bound the right-hand side by $\epsilon$. 
Since $\mathrm{diam} \, A_i < \omega(\epsilon)$,
\[
\left| \sum_{i \in I_{R_1 r}(p)} \left(\hat{v} \cdot \nabla \Gamma_E\left(0,\tfrac{1}{2};y_i\right)\right)^2 
\frac{|A_i|}{r^{n}} 
	- \sum_{i \in I_{R_1 r}(p)} \int_{r^{-1} u(A_i)} \left(\hat{v} \cdot \nabla \Gamma_E\left(0,\tfrac{1}{2};y\right)\right)^2 d((r^{-1}u)_\#\Vol)\right| < C(n) \epsilon,
\]
where $((r^{-1}u)_\#\Vol)$ denotes the push-forward under $r^{-1}u$ of the standard volume measure on $M$. Since $u$ satisfies (\ref{eq:BoundsOng}), we find
\begin{multline*}
\left| \sum_{i \in I_{R_1 r}(p)} \int_{r^{-1} u(A_i)} \left(\hat{v} \cdot \nabla \Gamma_E\left(0,\tfrac{1}{2};y\right)\right)^2 d((r^{-1}u)_\#\Vol)
- \sum_{i \in I_{R_1 r}(p)} \int_{r^{-1} u(A_i)} \left(e_1 \cdot \nabla \Gamma_E\left(0,\tfrac{1}{2};y\right)\right)^2 dy \right| \\ < C(n) (Q-1) < C(n) \epsilon.
\end{multline*}

Let us now estimate the last term in (\ref{eq:CalcdH}). By the choice of $R_1$ in (\ref{eq:IntTailDecPart}),
\[
\sum_{i \in I_{r_h/2}(p) \backslash I_{R_1 r}(p) } \left(\hat{v} \cdot \nabla \Gamma\left(x,\tfrac{1}{2};y_i\right) \right)^2 \frac{|A_i|}{r^n} < C(n) \epsilon.
\]
Finally, because of (\ref{eq:NoEffRestMan}),
\[
\sum_{i \in (I_{r_h/2})^c} \left(\hat{v} \cdot \nabla \tilde{K}\left(0,\tfrac{1}{2}; q_i\right)\right)^2 \frac{|A_i|}{r^n} 
= \frac{(r^{n+1})^2 }{r^n} 
	\sum_{i \in (I_{r_h/2})^c} (v \cdot \nabla K(0,t;q_i))^2 |A_i| 
< C(n) \epsilon.
\]

Combining the estimates above, we find
\[
\left| |dH(v)|^2 - 1 \right|  \leq C(n) \epsilon.
\]
It follows immediately from Lemma \ref{le:TruncHeatKernClose} that also
\[
\left| |dH^N(v)|^2 -1 \right| \leq C(n) \epsilon,
\]
for $N$ large enough.
\end{proof}

It follows that particular choices of the points $q_i$ in fact make $G$ close to an isometry: for that we can start with an arbitrary net, apply Theorem \ref{Th:EmbDiffWeight}, and just repeat points $q_i$ in the list with smaller, but uniform weights.

\begin{Theorem}
\label{Th:EmbHeatKernEucl}
Let $\epsilon > 0$. Then, there exists a $t_0 = t_0(n,\kappa,\iota,\epsilon)$ such that for all $0<t \leq t_0$ there exists an $N_0 = N_0(n,\kappa,\iota,V,\epsilon,t)$ such that if $(M,g) \in \mathcal{M}(n,\kappa,\iota,V)$, there exist points $p_1, \dots, p_{N_0}$ on $M$ such that the map $H$ defined by
\begin{equation}
H(p) := (2t)^{\frac{n+2}{4}}\sqrt{2}(4\pi)^{\frac{n}{4}} \lambda (K(p,t;p_1), \dots, K(p,t;p_{N_0})),
\end{equation}
for a certain constant $\lambda > 0$, is an embedding of $M$ into $\R^N$ satisfying for all $p \in M$,
\begin{equation}
1 - \epsilon <  |(d H)_p| < 1 + \epsilon.
\end{equation}
In addition, there exists an $N_E= N_E(n,\kappa,\iota,V,\epsilon,t)$ such that whenever $N\geq N_E$, the same statements hold with every heat kernel $K$ replaced by the truncated version $K_N$.
\end{Theorem}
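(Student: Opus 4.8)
The plan is to obtain this theorem as a corollary of Theorem~\ref{Th:EmbDiffWeight}, exactly along the lines indicated just above it: a coordinate of the weighted map of Theorem~\ref{Th:EmbDiffWeight} that carries the weight $|A_i|^{1/2}$ is to be replaced by several coordinates carrying one common, small weight $\lambda$, in such a way that the contribution to the squared differential is left unchanged; and since the resulting map is an injective linear image of the old one, it is still an embedding.

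First I would fix, once and for all, a small $\epsilon_1 = \epsilon_1(n,\epsilon)$ to be specified at the end, and apply Theorem~\ref{Th:EmbDiffWeight} with $\epsilon_1$ in place of $\epsilon$. This yields the time $t_0 = t_0(n,\kappa,\iota,\epsilon_1)$ of the present statement and, for each fixed $0 < t \le t_0$, a scale $\delta = \delta(n,\kappa,\iota,\epsilon_1,t)$. I would then make a specific choice of net and partition: let $\{q_1,\dots,q_{N_0'}\}$ be a maximal $\tfrac{\delta}{2}$-separated subset of $M$ (in particular a $\delta$-net), and let $\{A_i\}_{i=1}^{N_0'}$ be the corresponding Voronoi cells, so that $B_{\delta/4}(q_i) \subseteq A_i \subseteq B_{\delta/2}(q_i)$. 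Because the metric is two-sidedly comparable to the Euclidean one on balls below the harmonic radius, one gets $|A_i| \ge v_0$ for all $i$ with $v_0 = v_0(n,\kappa,\iota,\delta) > 0$, and the disjoint balls $B_{\delta/4}(q_i)$ together with $\Vol(M) \le V$ give $N_0' \le N_0'(n,\kappa,\iota,V,\epsilon_1,t)$. For this net and partition, Theorem~\ref{Th:EmbDiffWeight} provides the embedding $H_{\mathrm{old}}$ (and, for $N$ large, its truncated analogue) with
\[
\bigl| \, |(dH_{\mathrm{old}})_p(v)|^2 - 1 \, \bigr| \le C(n)\,\epsilon_1
\]
for every $p \in M$ and unit $v \in T_pM$, and with the identity $|(dH_{\mathrm{old}})_p(v)|^2 = (2t)^{\frac{n+2}{2}} V_e^{-2} \sum_i |A_i|\,(v \cdot \nabla K(p,t;q_i))^2$ from its proof.

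Next I would quantize the weights. Since $t$ is now fixed, (\ref{eq:GradDecHeatKernMan}) gives a bound $\sup_{p,q} |\nabla K(p,t;q)| \le D(n)\, t^{-(n+1)/2}$, and $N_0'$ is fixed too, so I may pick $w \in (0, v_0)$ so small that $(2t)^{\frac{n+2}{2}} V_e^{-2}\, w\, N_0'\, D(n)^2\, t^{-(n+1)} < \epsilon_1$. Set $m_i := \lfloor |A_i|/w \rfloor \ge 1$, let $N_0 := \sum_i m_i$ (so $N_0 \le V/w$, a quantity depending only on $n,\kappa,\iota,V,\epsilon,t$), let $p_1,\dots,p_{N_0}$ be obtained by repeating each $q_i$ exactly $m_i$ times, and put $\lambda := \sqrt{w}$. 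Then $H$ is obtained from $H_{\mathrm{old}}$ by post-composition with the linear map $\R^{N_0'} \to \R^{N_0}$ that divides the $i$-th coordinate by $|A_i|^{1/2}$, rescales by $\sqrt w$, and repeats it $m_i$ times; this map is injective and $M$ is compact, so $H$ is again an embedding. For the dilatation one computes directly
\[
|(dH)_p(v)|^2 = (2t)^{\frac{n+2}{2}} V_e^{-2} \sum_{i=1}^{N_0'} m_i w\, (v \cdot \nabla K(p,t;q_i))^2,
\]
and since $0 \le |A_i| - m_i w < w$, the choice of $w$ yields $\bigl| |(dH)_p(v)|^2 - |(dH_{\mathrm{old}})_p(v)|^2 \bigr| < \epsilon_1$, hence $\bigl| |(dH)_p(v)|^2 - 1 \bigr| \le C(n)\epsilon_1$ for all $p$ and unit $v$; choosing $\epsilon_1$ small enough in terms of $\epsilon$ gives $1 - \epsilon < |(dH)_p| < 1 + \epsilon$. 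The truncated statement follows in the same way, using the truncated map of Theorem~\ref{Th:EmbDiffWeight} and Lemma~\ref{le:TruncHeatKernClose} to keep $\sup|\nabla K_N|$ under control.

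There is no essential difficulty here; the theorem is really a repackaging of Theorem~\ref{Th:EmbDiffWeight}. The one point that needs attention is the order of the quantifiers in the quantization step: $w$ (equivalently $\lambda$) must be chosen \emph{after} $t$, the net, and therefore $N_0'$ have been fixed, since the rounding error is the finite sum $\sum_i(|A_i| - m_i w)(v\cdot\nabla K(p,t;q_i))^2$, whose number of terms and whose pointwise factor $t^{-(n+1)/2}$ on $|\nabla K|$ are only bounded once $t$ and the net are in place. Using a separated net rather than an arbitrary one serves only to keep $\min_i|A_i|$, and hence $w$ and $N_0$, quantitatively controlled by $n,\kappa,\iota,V,\epsilon,t$.
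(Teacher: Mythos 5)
Your proposal is correct and follows essentially the same route as the paper: the theorem is deduced from Theorem \ref{Th:EmbDiffWeight} by quantizing each weight $|A_i|$ into (roughly $|A_i|/\lambda$) repetitions of the point $q_i$ carrying one common small weight, so that the squared differential changes by at most a rounding error controlled by taking $\lambda$ small after $t$ and the net are fixed. Your version merely makes the bookkeeping explicit (floor instead of ceiling, a separated net to keep $\min_i|A_i|$ and hence $N_0$ quantitative, and the sup bound on $|\nabla K|$ for the error), which is consistent with the paper's shorter argument.
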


\begin{proof}
This is an immediate consequence of Theorem \ref{Th:EmbDiffWeight}. This follows because, for any $\lambda>0$, we can replace for each fixed $i$ the point $q_i$ in the formulation of Theorem \ref{Th:EmbDiffWeight} by points $q_i^j$, $j = 1, \dots, N_i$, where $N_i = \lceil|A_i|/\lambda\rceil$, such that
\[
\left| \lambda N_i - |A_i| \right| < \lambda.
\]
After renaming the points $q_i^j$ to $p_k$, it follows that when $\lambda = \lambda(n,\kappa,\iota,V,\epsilon,t)$ small enough,
\[
1 - \epsilon < |(d H)_p| < 1 + \epsilon.
\]
\end{proof}

Again let us consider the continuous version of the map $H$ as well. 
That is, consider the map $\mathcal{H}: M \to L^2(M)$ given by 
\begin{equation}
\mathcal{H}(p)(q) = (2t)^{\frac{n+2}{4}}\frac{1}{V_e} K(p,t;q).
\end{equation}
Then, 
\begin{equation}
|d\mathcal{H}_p(v)|^2 = 
	(2t)^{\frac{n+2}{2}} \frac{1}{V_e^2} \int_M \left( v \cdot \nabla K(p,t,q) \right)^2 dq.
\end{equation}
As $t \to 0$, the right-hand side will converge to $1$. 
In fact, we have the following theorem.

\begin{Theorem}
\label{Th:EmbCont}
Let $\epsilon > 0$. Then there exists a $t_0 = t_0(n,\kappa,\iota,\epsilon)$ such that for all $0< t < t_0$, and all $(M,g) \in \mathcal{M}(n,\kappa,\iota,V)$, the map
\begin{equation}
\mathcal{H}(p)(q) := (2t)^{\frac{n+2}{4}} \sqrt{2}(4\pi)^{\frac{n}{4}} K(p,t,q),
\end{equation}
is an embedding of $M$ into $L^2(M)$ such that
\begin{equation}
1 - \epsilon < |(d\mathcal{H})_p| < 1 + \epsilon.
\end{equation}
\end{Theorem}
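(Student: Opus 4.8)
The plan is to deduce Theorem \ref{Th:EmbCont} from the already-established facts about the maps $H$ in Theorem \ref{Th:EmbDiffWeight} (and $H^N$ in Theorem \ref{Th:EmbHeatKernEucl}), by recognizing the continuous map $\mathcal H$ as a ``limit'' of those discrete maps whose dilatation we have already pinned down. The injectivity of $\mathcal H$ for $t$ small is essentially contained in Step 2 of the proof of Theorem \ref{Th:EmbHeatKernInf}: if $\mathcal H(p) = \mathcal H(\tilde p)$ as elements of $L^2(M)$, then in particular $K(p,t;q) = K(\tilde p,t;q)$ for a.e.\ $q$, hence (by continuity of the heat kernel) for all $q$; but the maximum principle argument together with the fact that the fundamental solutions from points in a small ball around $p$ are small outside $B_{r\tilde R}(p)$ forces $p = \tilde p$. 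So injectivity needs only a remark, not a new argument.

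The heart of the matter is the dilatation estimate, and here the key observation is exactly the one flagged in the paragraph preceding the theorem: for $v \in T_p(M)$ with $|v|=1$, writing $v = v^j\partial_{u^j}$ in harmonic coordinates and $\hat v = (v^1,\dots,v^n)$,
\begin{equation*}
|d\mathcal H_p(v)|^2 = (2t)^{\frac{n+2}{2}} 2(4\pi)^{n/2} \int_M (v\cdot\nabla K(p,t;q))^2\, dq
= \frac{1}{V_e^2}\int_M (v\cdot\nabla K(p,t;q))^2\, dq,
\end{equation*}
using $V_e = 1/(\sqrt 2 (4\pi)^{n/4})$ and $2t = r^2$. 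First I would split this integral over $M$ as $\int_{B_{r_h/2}(p)} + \int_{M\setminus B_{r_h/2}(p)}$, exactly as in Step 1 of the proof of Theorem \ref{Th:EmbDiffWeight}: the second piece is bounded by $\epsilon$ via (\ref{eq:NoEffRestMan}) once $r < r_3$. On the inner ball, I change variables $x = r^{-1}u(\cdot)$ and pass to the rescaled heat kernel $\Gamma$, so the inner integral becomes $\int_{r^{-1}u(B_{r_h/2}(p))} (\hat v\cdot\nabla\Gamma(0,\tfrac12;y))^2 d((r^{-1}u)_\#\Vol)$. Then I compare in three steps, all already worked out in Theorem \ref{Th:EmbDiffWeight}: replace $\Gamma$ by $\Gamma_E$ using Lemma \ref{le:CloseToEucl} (error $C(n)\sigma |B_{R_1}|$, made $<\epsilon$ by choosing $\sigma$ small and first truncating the $y$-integral outside $B_{R_1}$ using (\ref{eq:IntTailDec})); replace the measure $(r^{-1}u)_\#\Vol$ by Lebesgue measure using (\ref{eq:BoundsOng}) (error $C(n)(Q-1)$); and finally invoke the rotational invariance of $\int_{\R^n}(\hat v\cdot\nabla\Gamma_E(0,\tfrac12;y))^2 dy$ in the direction of $\hat v$, which equals $\int_{\R^n}(\partial_{x_1}\Gamma_E(0,\tfrac12;y))^2 dy = V_e^2$ by definition of $V_e$. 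Collecting the errors gives $\big||d\mathcal H_p(v)|^2 - 1\big| \le C(n)\epsilon$, hence $1-\epsilon < |(d\mathcal H)_p| < 1+\epsilon$ after relabeling $\epsilon$.

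The main obstacle — really the only nontrivial point beyond bookkeeping — is that $\mathcal H$ now maps into the infinite-dimensional space $L^2(M)$, so the identification $\dil_p(\mathcal H) = |(d\mathcal H)_p|$ and the finiteness of everything must be justified: one checks that $p \mapsto K(p,t;\cdot)$ is a smooth $L^2(M)$-valued map (heat kernel estimate (\ref{eq:DecHeatKernMan}) gives $L^2$ membership for fixed $t>0$, and the gradient estimate (\ref{eq:GradDecHeatKernMan}) together with dominated convergence gives differentiability with derivative $v \mapsto v\cdot\nabla K(p,t;\cdot)$), so that the expression for $|d\mathcal H_p(v)|^2$ above is legitimate and the dilatation is the operator norm of $d\mathcal H_p$. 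Since everything is controlled uniformly over $M \in \mathcal M(n,\kappa,\iota,V)$ in exactly the way the discrete proofs were, no new dependence on $V$ or on the manifold enters, and the argument is genuinely a corollary of the machinery already in place. One can equally read off the truncated statement by invoking Lemma \ref{le:TruncHeatKernClose}, though since the target here is $L^2(M)$ and the theorem as stated does not mention $K_N$, this is not needed.
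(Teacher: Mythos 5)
Your proposal is correct and follows essentially the paper's own route: the paper merely remarks that the proof is ``very similar to, but easier than'' that of Theorem \ref{Th:EmbDiffWeight}, and your argument is exactly that continuous analogue (split off $M\setminus B_{r_h/2}(p)$ via (\ref{eq:NoEffRestMan}), rescale to $\Gamma$, compare with $\Gamma_E$ via Lemma \ref{le:CloseToEucl}, use (\ref{eq:BoundsOng}) and the definition of $V_e$), with injectivity imported from Theorem \ref{Th:EmbHeatKernInf}. The only blemish is the harmless slip that the second equality in your first display drops the factor $(2t)^{(n+2)/2}$, which your subsequent bookkeeping in fact retains.
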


The proof of Theorem \ref{Th:EmbCont} is very similar to, but easier than that of Theorem \ref{Th:EmbDiffWeight}. 

\section{Embedding with eigenfunctions}
\label{se:EmbEig}

As was also observed by B\'{e}rard et al. \cite{berard_embedding_1994}, the map $\mathcal{H}$ in Theorem \ref{Th:EmbCont} can be composed with the isometry between $L^2(M)$ and $\ell^2$ given by the eigenfunction expansion. By the estimates on the eigenfunctions in Lemma \ref{le:EigFuncBounds}, we can truncate this map. This yields our main result about embedding with eigenfunctions.

\begin{Theorem}
\label{Th:EmbEigFunc}
Let $\epsilon > 0$. Then there exists a $t_0 = t_0(n,\kappa,\iota,\epsilon)$ such that for all $0 < t < t_0$, there is an $N_E = N_E(n,\kappa,\iota,V,\epsilon,t)$ such that if $N \geq N_E$, for all $(M,g) \in \mathcal{M}(n,\kappa,\iota,V)$, the map
\begin{equation}
\mathcal{F}_N(p) := (2t)^{\frac{n+2}{4}} \sqrt{2}(4\pi)^{n/4} 
\left(e^{-\lambda_1 t} \phi_1(p) , \dots, e^{- \lambda_N t} \phi_N(p) \right)
\end{equation}
is an embedding of $M$ into $\R^N$ such that 
\begin{equation}
1 - \epsilon < |(d\mathcal{F}_N)_p| < 1 + \epsilon.
\end{equation}
\end{Theorem}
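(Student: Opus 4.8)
The plan is to deduce Theorem \ref{Th:EmbEigFunc} from Theorem \ref{Th:EmbCont} by composing the continuous heat-kernel embedding $\mathcal{H}$ with the $L^2$-isometry furnished by the eigenfunction expansion, and then truncating. Concretely, fix $\epsilon>0$ and let $t_0=t_0(n,\kappa,\iota,\epsilon)$ be the time given by Theorem \ref{Th:EmbCont} (possibly shrunk to also satisfy $t_0\le 4$ and $t_0<2r_h^2$, so that the heat-kernel decay and gradient estimates of Section \ref{se:HeatKernelEstimates} apply). For $0<t<t_0$ the map $\mathcal{H}(p)(q)=(2t)^{(n+2)/4}\sqrt{2}(4\pi)^{n/4}K(p,t,q)$ is an embedding into $L^2(M)$ with $1-\epsilon<|(d\mathcal{H})_p|<1+\epsilon$. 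Now the linear map $L^2(M)\to\ell^2$, $f\mapsto(\langle f,\phi_k\rangle)_{k\ge 0}$, is an isometry, and applied to $\mathcal{H}(p)$ it sends $p$ to the sequence whose $k$-th entry is $(2t)^{(n+2)/4}\sqrt{2}(4\pi)^{n/4}e^{-\lambda_k t}\phi_k(p)$, using the spectral representation $K(p,t,q)=\sum_k e^{-\lambda_k t}\phi_k(p)\phi_k(q)$ and orthonormality of the $\phi_k$. Thus the full (infinite) map $\mathcal{F}_\infty$ is, up to the harmless inclusion of the $k=0$ constant term, precisely $\mathcal{H}$ followed by an isometry, hence is itself an embedding with the same dilatation bounds; the $k=0$ term contributes a constant vector and affects neither injectivity nor the differential.

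The remaining point is to show that truncating at a finite index $N$, chosen uniformly in terms of $n,\kappa,\iota,V,\epsilon,t$, changes the map by so little that it stays an embedding with dilatation in $(1-\epsilon,1+\epsilon)$. For this I would invoke Lemma \ref{le:TruncHeatKernClose}: with $t_0\le t$ fixed, for $N\ge N_0(n,\kappa,\iota,V,\epsilon',t)$ one has $\|K_N(\cdot,t;q)-K(\cdot,t;q)\|_\infty<\epsilon'$ and the analogous bound on gradients, uniformly in $q$ and over the class $\mathcal{M}(n,\kappa,\iota,V)$. Replacing $K$ by $K_N$ in $\mathcal{H}$ produces exactly $\mathcal{F}_N$ (modulo the constant $k=0$ term), and the $C^1$-closeness of $K_N$ to $K$ translates into closeness of $\mathcal{F}_N$ to $\mathcal{F}_\infty$ both pointwise and in the differential. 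Since $\mathcal{F}_\infty$ is an embedding with dilatation bounded away from $1$ by a definite amount, a sufficiently small $C^1$-perturbation remains an embedding—injectivity is preserved because the image of the unperturbed map on the compact manifold $M$ has a definite "spread" (the argument of Step 2 of Theorem \ref{Th:EmbHeatKernInf}, or simply compactness plus the uniform lower bound on $|(d\mathcal{F}_\infty)_p|$ together with a Lebesgue-number/openness argument), and the dilatation bound is preserved because $|(d\mathcal{F}_N)_p|$ differs from $|(d\mathcal{F}_\infty)_p|$ by at most the operator norm of the perturbation's differential, which the gradient estimate of Lemma \ref{le:TruncHeatKernClose} makes as small as desired. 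Choosing $\epsilon'$ small enough relative to $\epsilon$ and $n$ yields $1-\epsilon<|(d\mathcal{F}_N)_p|<1+\epsilon$ for all $p$, and setting $N_E=N_0(n,\kappa,\iota,V,\epsilon',t)$ completes the proof.

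The main obstacle—and the place where some care is needed rather than a routine estimate—is the passage from "$\mathcal{F}_N$ is $C^1$-close to an embedding" to "$\mathcal{F}_N$ is itself an embedding," i.e. the stability of injectivity under perturbation. Closeness of differentials is genuinely easy here because everything is linear in the heat kernel, so the dilatation estimate is immediate from Lemma \ref{le:TruncHeatKernClose}; but global injectivity is not an open condition in the $C^0$ topology alone. One clean way around this is to not argue perturbatively for injectivity at all, but instead re-run Step 2 of the proof of Theorem \ref{Th:EmbHeatKernInf} (or its analogue for $\mathcal{H}$), which already established injectivity of the truncated heat-kernel construction directly via the local heat-triangulation argument and the maximum principle, uniformly over the class and for $N$ large. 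I would adopt that route: observe that $\mathcal{F}_N$ and the truncated map $G^N$ (resp. $H^N$) from Theorems \ref{Th:EmbHeatKernInf}–\ref{Th:EmbDiffWeight} are related by the same isometric change of target as in the continuous case, so the injectivity of $G^N$/$H^N$ already proved there transfers to $\mathcal{F}_N$, and only the dilatation statement needs the new (and trivial) perturbation estimate. This makes the proof short and avoids any delicate global-injectivity-stability argument.
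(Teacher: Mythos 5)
Your proposal is correct and follows essentially the same route as the paper: injectivity is obtained by factoring the truncated heat-kernel map $G^N$ of Theorem \ref{Th:EmbHeatKernInf} through $\mathcal{F}_N$, and the dilatation bound comes from Theorem \ref{Th:EmbCont} composed with the $L^2(M)\to\ell^2$ isometry, with the tail beyond $N$ discarded by a Lemma \ref{le:TruncHeatKernClose}/Lemma \ref{le:EigFuncBounds}-type estimate (for the $\ell^2$ tail of the differential one either reruns that argument with the eigenfunction gradient bounds, or integrates the pointwise bound of Lemma \ref{le:TruncHeatKernClose} over $M$ using the volume bound $V$). The only cosmetic inaccuracy is calling the relation between $\mathcal{F}_N$ and $G^N$ an ``isometric change of target''; it is a fixed linear (affine, because of the constant $k=0$ mode) map applied to $\mathcal{F}_N$, which is all that is needed to transfer injectivity.
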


\begin{proof}
It immediately follows from Theorem \ref{Th:EmbHeatKernInf} that the map $\mathcal{F}_N$ is an embedding for $N$ larger than some $N_E(n,\kappa,\iota,V,\epsilon,t)$. Indeed, as the truncated map $G^N$ is an embedding, $\mathcal{F}_N$ is one as well.

Next, we note that $L^2(M)$ is isometric to $\ell^2$, by the isometry
\[
U(f)_k = \int_M f(q) \phi_k(q) dq.
\]
If we let $t_0$ be the constant from Theorem \ref{Th:EmbCont}, we know that for $t < t_0$,
\[
\begin{split}
(1 - \epsilon)^2
&< 2 (2t)^{\frac{n+2}{2}} (4\pi)^{n/2} \sum_{k=1}^\infty e^{-2 \lambda_k t} 
	\left( v \cdot \nabla \phi_k(p)\right)^2\\
&= |(d\mathcal{H})_p(v)|^2 \\
&= | (d (U \circ \mathcal{H}) )_p (v)|^2\\
&< (1+\epsilon)^2.
\end{split}
\]

By Lemma \ref{le:EigFuncBounds}, an argument along the lines of the proof of Theorem \ref{le:TruncHeatKernClose} yields that there exists an $N_E= N_E(n,\kappa,\iota,V,\epsilon,t)$ such that the tail can be ignored for $N \geq N_E$.

\end{proof}

\section{Schauder theory for fundamental solutions on charts}
\label{se:SchauderTheory}

In this section, we will show how parabolic Schauder estimates in $\R^n$ yield closeness of fundamental solutions to Euclidean fundamental solutions. 

For $x \in \R^n$ and $r,\sigma>0$, we define the parabolic cylinder 
\begin{equation}
P_{r,\sigma}(x) = \{(y,s)\in \R^n\times\R^+ \, | \, |y - x| < r, 0 < t < \sigma \}.
\end{equation}
We will consider the parabolic operator $L$ given by
\begin{equation}
L u := u_t - a^{ij}(x) \partial_{x^i}\partial_{x^j} u,
\end{equation}
acting on functions $u$ defined on a domain $\Omega \subset \R^n \times \R^+$ such that $P_{R,T}(0)\subset \Omega$ where $R>0$ and $T>0$ are constants. Moreover, we assume that the coefficients $a^{ij}$ form a symmetric matrix $(a^{ij}=a^{ji})$ and satisfy
\begin{subequations}
\label{eq:BoundsOna}
\begin{align}
Q^{-1} (\delta_{ij}) \leq (a_{ij}) &\leq Q (\delta_{ij}) \qquad \text{ as bilinear forms, }\\
\| a_{ij} \|_{C^\alpha} &\leq Q - 1,
\end{align}
\end{subequations}
for some $Q > 1$ and $0 < \alpha <1$. Throughout this section we will additionally assume that $Q < \sqrt{2}$.

By a fundamental solution to $L$ we mean a function $\Gamma$ satisfying $L \Gamma = 0$ for $t>0$, and moreover for every continuous function $f$ on $\overline{\Omega}$,
\begin{equation}
\lim_{t \downarrow 0} \int_\Omega \Gamma(x,t; y) f(x) dx = f(y).
\end{equation}

The function $Z$ is defined as the fundamental solution of the equation with the coefficients frozen at $y$, that is
\begin{equation}
Z(x,t;y) 
:= \frac{\sqrt{\det(a_{ij}(y))}}{(2 \sqrt{\pi})^n  t^{n/2}}
	\exp\left[ - \frac{\sum_{i,j=1}^n a_{ij}(y) (x^i - y^i)(x^j - y^j)}{4t}\right].
\end{equation}

The following lemma is the main technical result from the Schauder estimates that we will need. It states that every fundamental solution to $L$ that decays sufficiently fast, is in the space-coordinate $C^1$-close to $Z$. The main part of the proof is based on Lemmas \ref{le:FundSolCloseToEucl} and \ref{le:DiffFundSolSmall}.

\begin{Lemma}
\label{le:CloseToEucl}
Let $\Gamma$ be a fundamental solution of $L$ satisfying
\begin{equation}
\Gamma(x,t;y) \leq \frac{C_d}{t^{n/2}} \exp\left[ - \frac{|x-y|^2}{8t} \right],
\end{equation}
for a constant $C_d$ and for all $x, y \in B_{R}(0)$, $0<t\leq 2$. Then for $R\geq R(n,C_d,\alpha,Q)$, for all $(x,t) \in P_{R-1,4}(0) \backslash P_{\frac{1}{2},\frac{1}{4}}(y)$, if $|x| \leq 2$ or $|y| \leq 2$,
\begin{subequations}
\label{eq:FundCloseToFrozen}
\begin{align}
\left| \Gamma(x, t ; y) - Z(x,t; y) \right| 
	&\leq (Q - 1) C(n, \alpha),\\
\left| \nabla \Gamma(x, t ; y)- \nabla Z(x,t; y) \right|
	&\leq (Q - 1) C(n, \alpha),
\end{align}
\end{subequations}
and
\begin{subequations}
\label{eq:FundCloseToEucl1}
\begin{align}
\left| \Gamma(x, t ; y) - \Gamma_E(x,t; y) \right| 
	&\leq (Q - 1) C(n, \alpha),\\
\left| \nabla \Gamma(x, t ; y)- \nabla \Gamma_E(x,t; y) \right|
	&\leq (Q - 1) C(n, \alpha).
\end{align}
\end{subequations}

\end{Lemma}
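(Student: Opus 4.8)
The statement will follow by triangulating through the frozen-coefficient kernel $Z$: since $|\Gamma-\Gamma_E|\le|\Gamma-Z|+|Z-\Gamma_E|$ and likewise for the gradients, it suffices to prove \eqref{eq:FundCloseToFrozen} and to compare $Z$ with the Euclidean kernel $\Gamma_E$. Throughout, the crucial structural point is that the region $P_{R-1,4}(0)\setminus P_{\tfrac12,\tfrac14}(y)$, intersected with $\{|x|\le2\}\cup\{|y|\le2\}$, is bounded away from the common singularity of these kernels at $(y,0)$, so on it $\Gamma$, $Z$ and $\Gamma_E$ are smooth and, outside the far field, uniformly bounded together with their derivatives in terms of $n$ alone.

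For the comparison of $\Gamma$ with $Z$ I would use that $w:=\Gamma-Z$ solves a parabolic equation with small right-hand side. Since $Z$ satisfies the frozen equation $\partial_t Z-a^{ij}(y)\partial_{x^i}\partial_{x^j}Z=0$,
\begin{equation*}
Lw = -LZ = \bigl(a^{ij}(x)-a^{ij}(y)\bigr)\,\partial_{x^i}\partial_{x^j}Z=:f ,
\end{equation*}
and the H\"older bound \eqref{eq:BoundsOna} on the coefficients together with the explicit bounds on $\partial_{x^i}\partial_{x^j}Z$ valid away from the pole (where $|x-y|\ge\tfrac12$ or $t\ge\tfrac14$, with $t\le4$) gives $\|f\|_{C^\alpha}\le C(n,\alpha)(Q-1)$ uniformly on the region. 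A $C^0$ bound $|w|\le C(n,\alpha)(Q-1)$ then comes from Lemma \ref{le:FundSolCloseToEucl}, and interior parabolic Schauder estimates (Lemma \ref{le:DiffFundSolSmall}) upgrade it to $|w|+|\nabla w|\le C(n,\alpha)(Q-1)$, which is \eqref{eq:FundCloseToFrozen}. In the far field, i.e. when $|x-y|$ is comparable to $R$, one does not use the equation for $w$ but the hypothesised Gaussian decay of $\Gamma$ — this is exactly where $R$ must be allowed to depend on $C_d$ (in addition to $\alpha$ and $Q$) — and the faster explicit decay of $Z$ and of $\nabla Z$: since $\sup_{0<t\le4}t^{-n/2}\exp(-cR^2/t)\to0$ as $R\to\infty$, choosing $R\ge R(n,C_d,\alpha,Q)$ forces $|\Gamma|+|\nabla\Gamma|+|Z|+|\nabla Z|\le C(n,\alpha)(Q-1)$ there, the gradient of $\Gamma$ being controlled from $L\Gamma=0$ by a further interior estimate.

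The comparison of $Z$ with $\Gamma_E$ is elementary: \eqref{eq:BoundsOna} gives $|a_{ij}(y)-\delta_{ij}|\le Q-1$, hence $|\sqrt{\det(a_{ij}(y))}-1|\le C(n)(Q-1)$ and $\bigl|\sum_{i,j}a_{ij}(y)(x^i-y^i)(x^j-y^j)-|x-y|^2\bigr|\le n(Q-1)|x-y|^2$, so that on the region, where $|x-y|^2 Z$, $Z$, $|\nabla Z|$ and the analogous quantities for $\Gamma_E$ are all bounded by $C(n)$, a mean-value estimate in these parameters yields $|Z-\Gamma_E|+|\nabla Z-\nabla\Gamma_E|\le C(n)(Q-1)$. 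Combining this with \eqref{eq:FundCloseToFrozen} by the triangle inequality and absorbing $C(n)$ into $C(n,\alpha)$ gives \eqref{eq:FundCloseToEucl1}.

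The main obstacle is the $\Gamma$-versus-$Z$ step, and within it the uniform $C^\alpha$ control of the inhomogeneity $f$ together with the placement of the Schauder cylinders right up to the parabolic boundary of the excluded cylinder $P_{\tfrac12,\tfrac14}(y)$ — where $\partial_{x^i}\partial_{x^j}Z$ is largest — and up to the lateral boundary $\{|x|=R-1\}$. This is precisely what Lemmas \ref{le:FundSolCloseToEucl} and \ref{le:DiffFundSolSmall} are designed to deliver, so the role of the present lemma is largely to assemble them, carry out the triangulation through $Z$, and fix $R$ large enough (in terms of $n$, $C_d$, $\alpha$, $Q$) that the Gaussian tails are negligible.
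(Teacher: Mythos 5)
Your overall architecture --- triangulating through $Z$, disposing of the far field by the Gaussian decay after choosing $R$ large, and comparing $Z$ with $\Gamma_E$ by the elementary frozen-coefficient computation --- is the same as the paper's. The genuine gap is in the $\Gamma$-versus-$Z$ step, where you assign the two auxiliary lemmas the wrong roles. Lemma \ref{le:FundSolCloseToEucl} is an \emph{existence} statement: the parametrix construction produces one particular fundamental solution that is close to $Z$. The $\Gamma$ in the present lemma is an \emph{arbitrary} fundamental solution that merely satisfies the Gaussian decay hypothesis, and nothing identifies it with the constructed one; so the $C^0$ bound $|w|\leq C(n,\alpha)(Q-1)$ for $w=\Gamma-Z$ cannot be quoted from Lemma \ref{le:FundSolCloseToEucl} alone. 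The missing step is exactly the approximate-uniqueness content of Lemma \ref{le:DiffFundSolSmall}: apply it to the pair consisting of the given $\Gamma$ and the parametrix solution of Lemma \ref{le:FundSolCloseToEucl} (both satisfy the decay (\ref{eq:ExpDecGammai}), the latter by the final assertion of Lemma \ref{le:FundSolCloseToEucl}), with $\epsilon$ of order $Q-1$; this is precisely where $R$ acquires its dependence on $Q$ and $C_d$. Since that lemma already yields $C^{2,\alpha}$ closeness of the two fundamental solutions, the gradient estimate in (\ref{eq:FundCloseToFrozen}) then follows at once by the triangle inequality, with no need for your inhomogeneous equation $Lw=f$.

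Relatedly, Lemma \ref{le:DiffFundSolSmall} cannot be used the way you propose, as an interior Schauder upgrade for $w=\Gamma-Z$: it applies only to the difference of two fundamental solutions of $L$, which solves the \emph{homogeneous} equation with vanishing initial trace, and $Z$ is not a fundamental solution of $L$ (it solves the frozen equation). Your inhomogeneous route (interior Schauder for $Lw=f$ with $\|f\|_{C^\alpha}\leq C(n,\alpha)(Q-1)$ away from the pole) could be made to work, but only after the $C^0$ bound on $w$ has been obtained from the two-lemma comparison just described, and with some care in placing cylinders up to the initial time; as written, the $C^0$ input is exactly the unproven step. The $Z$-versus-$\Gamma_E$ comparison and the far-field discussion (with $R=R(n,C_d,\alpha,Q)$ absorbing the tails) are fine and agree with the paper.
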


\begin{proof}
The inequalities (\ref{eq:FundCloseToFrozen}) follow immediately from Lemmas \ref{le:FundSolCloseToEucl} and \ref{le:DiffFundSolSmall} below.

To show the inequalities in (\ref{eq:FundCloseToEucl1}), we can calculate explicitly
\[
\partial_{x^j} Z(x,t;y) = \frac{1}{2t}a_{ij}(y)(x^i-y^i) Z(x,t;y),
\]
and
\[
\nabla \Gamma_E(x,t; y) = \frac{1}{2t} (x-y) \Gamma_E(x,t;y).
\]
It follows that for $(x,t) \notin P_{\frac{1}{2},\frac{1}{4}}(y)$,
\begin{align*}
\left| \Gamma_E(x, t ; y) - Z(x,t; y) \right| 
	&\leq (Q - 1) C(n, \alpha),\\
\left| \nabla \Gamma_E(x, t ; y)- \nabla Z(x,t; y) \right|
	&\leq (Q - 1) C(n, \alpha),
\end{align*}
and therefore also if $|y| \leq 2$ or $|x| \leq 2$,
\begin{align*}
\left| \Gamma(x, t ; y) - \Gamma_E(x,t; y) \right| 
	&\leq (Q - 1) C(n, \alpha),\\
\left| \nabla \Gamma(x, t ; y)- \nabla \Gamma_E(x,t; y) \right|
	&\leq (Q - 1) C(n, \alpha).
\end{align*}
\end{proof}

We first use the parametrix method to construct a fundamental solution that is close to $Z$. In the context of heat kernels on manifolds, the parametrix method usually has a related but slightly different meaning, and is at the basis of the Minakshisundaram-Pleijel expansion. Both methods start with an initial guess for the fundamental solution. However, our initial guess is courser, and therefore we need to use less regularity properties. We will follow the method as presented in Friedman's book \cite{friedman_partial_2008}. The precise statement is contained in the following lemma.

\begin{Lemma}
\label{le:FundSolCloseToEucl}
There is a fundamental solution $\Gamma$ for $L$ on the domain $\Omega$ such that for $y \in B_R$, and $(x,t) \in P_{R,T}(0)$,
\begin{subequations}
\label{eq:FundCloseToEucl}
\begin{align}
\left| \Gamma(x, t ; y) - Z(x,t; y) \right| 
	&\leq \frac{(Q - 1) C(n, \alpha)}{t^{(n-\alpha)/2}}\exp\left[ - \frac{|x- y|^2}{8t} \right],\\
\left| \nabla \Gamma(x, t ; y)- \nabla Z(x,t; y) \right|
	&\leq \frac{(Q - 1) C(n, \alpha)}{t^{(n+1-\alpha)/2}}\exp\left[ - \frac{|x- y|^2}{8t} \right].
\end{align}
\end{subequations}
In particular, there is a $C_d = C_d(n,\alpha)$ such that for $t \leq 4$, $\Gamma$ satisfies
\begin{equation}
\Gamma(x,t;y) \leq \frac{C_d}{t^{n/2}} \exp\left[ - \frac{|x-y|^2}{8t} \right].
\end{equation}
\end{Lemma}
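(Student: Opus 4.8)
The plan is to construct $\Gamma$ by the classical Levi parametrix method, following Friedman's book \cite{friedman_partial_2008}, with the frozen-coefficient kernel $Z$ playing the role of parametrix. A preliminary step is to extend the coefficients $a^{ij}$ from a neighbourhood of $\overline{P_{R,T}(0)}$ to all of $\R^n$, keeping them symmetric, uniformly elliptic, and with $C^\alpha$ norm bounded by $Q-1$ up to an inessential constant absorbed into $C(n,\alpha)$ (in the actual application the harmonic radius theorem already supplies the coefficients on a strictly larger ball, so this costs nothing). One then seeks a fundamental solution in the form $\Gamma = Z + W$, where $W$ is the space--time (Volterra) convolution of $Z$ against an unknown density $\Phi$,
\[
W(x,t;y) := \int_0^t\!\!\int_{\R^n} Z(x,t;\xi,\tau)\,\Phi(\xi,\tau;y)\,d\xi\,d\tau,
\]
$Z(x,t;\xi,\tau)$ denoting the frozen-at-$\xi$ Gaussian kernel with pole at $(\xi,\tau)$, so that $Z(x,t;y)=Z(x,t;y,0)$.

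Applying $L$ in the $(x,t)$ variables to the ansatz and imposing $L\Gamma=0$ yields, in the familiar way, a Volterra integral equation of the form $\Phi = L_xZ + (L_xZ)\star\Phi$ for the density, where $\star$ is the same space--time convolution. The crucial point is that $L_x Z(x,t;y)=\bigl(a^{ij}(y)-a^{ij}(x)\bigr)\partial_{x^i}\partial_{x^j}Z(x,t;y)$, which by the Hölder bound is of size $(Q-1)\,|x-y|^\alpha\,t^{-(n+2)/2}\exp[-c|x-y|^2/t]$, hence bounded by $(Q-1)C(n,\alpha)\,t^{-(n+2-\alpha)/2}\exp[-|x-y|^2/(8t)]$ (and identically so with the pole shifted to $(\xi,\tau)$ and $t$ replaced by $t-\tau$): the Hölder exponent absorbs $\alpha$ powers of the singularity, and a factor $Q-1$ appears. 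I would then solve the Volterra equation by the Neumann series $\Phi=\sum_{m\ge1}(L_xZ)^{\star m}$: at each convolution the extra $t^{\alpha/2}$ produces a Gamma-function denominator, so the series converges uniformly for $t\le T$ (and $T\le 4$ here is convenient), and since every summand carries at least one power of $Q-1$, which is bounded by $\sqrt2-1<1$, the sum satisfies $|\Phi(x,t;y)|\le (Q-1)C(n,\alpha)\,t^{-(n+2-\alpha)/2}\exp[-|x-y|^2/(8t)]$.

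Convolving $\Phi$ once more against $Z$ to form $W$ gains a further power of $t$ in the denominator, lowering the singularity from $n+2-\alpha$ to $n-\alpha$; differentiating under the integral sign, $\nabla_x Z$ contributes an additional but still integrable $t^{-1/2}$, which yields precisely the stated bounds on $W$ and $\nabla_x W$. Since $\Gamma-Z=W$ and $\nabla\Gamma-\nabla Z=\nabla W$, this is (\ref{eq:FundCloseToEucl}). It remains to check that $\Gamma$ is genuinely a fundamental solution: $L\Gamma=0$ holds by construction, and the initial condition follows from $\int_{\R^n}Z(x,t;y)\,dx=1$ together with $\int_{\R^n}|W(x,t;y)|\,dx=O(t^{\alpha/2})\to0$; restricting to $\Omega$ is harmless because the defining limit is local and $\Gamma$ decays Gaussianly away from the pole. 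For the final Gaussian upper bound, use $\det(a_{ij}(y))\le Q^n$ and $\sum_{i,j}a_{ij}(y)(x-y)^i(x-y)^j\ge Q^{-1}|x-y|^2\ge\tfrac12|x-y|^2$ (as $Q<\sqrt2$) to obtain $Z(x,t;y)\le C(n)\,t^{-n/2}\exp[-|x-y|^2/(8t)]$, and combine with $|W|\le (Q-1)C(n,\alpha)\,t^{\alpha/2}\,t^{-n/2}\exp[-|x-y|^2/(8t)]\le C(n,\alpha)\,t^{-n/2}\exp[-|x-y|^2/(8t)]$ for $t\le4$, giving $\Gamma\le C_d(n,\alpha)\,t^{-n/2}\exp[-|x-y|^2/(8t)]$.

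The bulk of the work, and the main obstacle, is the system of parametrix estimates: bounding the iterated space--time convolutions $(L_xZ)^{\star m}$, tracking how much of the Gaussian exponent is degraded at each step (which is why one states the clean exponent $1/(8t)$ with room to spare rather than the sharp $1/(4t)$, and why confining attention to $t\le T\le4$ is convenient), and verifying that the $Q-1$ factor survives into the final bounds on $\Phi$, $W$ and $\nabla W$. All of this is entirely classical; the only point requiring care is to run the book-keeping so that every constant depends solely on $n$, $\alpha$ and $Q$, with the leading correction proportional to $Q-1$. The coefficient extension and the passage from a fundamental solution on $\R^n$ to one on $\Omega$ are routine.
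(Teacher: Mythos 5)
Your proposal is correct and follows essentially the same route as the paper: the Levi parametrix construction from Friedman's book with the frozen-coefficient kernel $Z$ as parametrix, the Volterra/Neumann-series bound $|\Phi|\leq (Q-1)C(n,\alpha)\,t^{-(n+2-\alpha)/2}\exp[-|x-y|^2/(8t)]$, and one further convolution (resp. with $\nabla Z$) to obtain the stated bounds on $\Gamma-Z$ and $\nabla\Gamma-\nabla Z$, plus the elementary Gaussian bound on $Z$ for the final estimate. The only differences are cosmetic: the paper integrates over $B_R$ and simply cites Friedman's Lemma 3 and the estimate behind his (4.15) (choosing the decay parameter $\lambda=1/\sqrt2$), whereas you extend the coefficients to $\R^n$ and spell out the series and the verification of the fundamental-solution property.
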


\begin{proof}
We denote the fundamental solution we are looking for by $\Gamma$. Following Friedman \cite{friedman_partial_2008}, we look for a fundamental solution $\Gamma$ of the form
\[
\Gamma(x,t; y) = Z(x,t;y) 
+ \int_0^t \int_{B_R} Z(x,t-\sigma;\eta) \Phi( \eta, \sigma; y ) d\eta\, d\sigma,
\]
for some function $\Phi$. Since 
\[
\Phi(x,t;y) = LZ(x,t;y) + \int_0^t \int_{B_R} LZ(x,t-\sigma;\eta) \Phi(\eta,\sigma;y) d\eta \, d\sigma,
\]
we can solve for the function $\Phi$ by iteration,
\[
\Phi(x,t; y) = \sum_{i = 1}^\infty (LZ)_i(x,t;y),
\]
where $(LZ)_1 := LZ$ and for $i \in \mathbb{N}$,
\[
(LZ)_{i+1}(x,t;y) := \int_0^t \int_{B_R} LZ(x,t-\sigma;\eta) (LZ)_i(\eta,\sigma; y) d\eta \, d\sigma.
\]
Inspecting the proof of \cite[Ch. 1, (4.15)]{friedman_partial_2008}, we find that the following estimates hold for $\Phi$. For every $0 < \lambda < 1/Q$, there is a $C = C(n,\alpha,\lambda)$ such that
\[
\left| \Phi(x,t; y) \right| \leq
	\frac{(Q-1) C}{t^{(n+2-\alpha)/2}}
		\exp\left[ - \frac{\lambda |x-y|^2}{4t} \right].
\]
Let us select $\lambda = 1/\sqrt{2}$. 
With \cite[Ch. 1, Lemma 3]{friedman_partial_2008}, it follows that
\[
\int_0^4 \int_{B_R} Z(x,t-\sigma;\eta) |\Phi(\eta,\sigma;y)| d\eta d\sigma \leq 
\frac{(Q-1) C(n,\alpha)}{t^{(n-\alpha)/2}} 
	\exp\left[ - \frac{| x - y|^2}{8t} \right]
\]
As a consequence,
\[
|\Gamma(x,t;y)| \leq \frac{C_d(n,\alpha)}{ t^{n/2}} 
\exp\left[- \frac{|x-y|^2}{8t}\right].
\]
Note also that
\[
\nabla\Gamma(x,t;y) = \nabla Z(x,t;y) 
	+ \int_0^t \int_{B_R} \nabla Z(x,t-\sigma; \eta) 
		\Phi(\eta,\sigma; y) \, d\eta \, d\sigma.
\]
Again using \cite[Ch. 1, Lemma 3]{friedman_partial_2008},
\[
\int_0^4 \int_{B_R} |\nabla Z(x,t-\sigma;\eta)| |\Phi(\eta,\sigma;y)| \, d\eta \, d\sigma \leq 
\frac{(Q-1) C(n,\alpha)}{t^{(n+1-\alpha)/2}} 
	\exp\left[ - \frac{| x - y|^2}{8t} \right]
\]
\end{proof}

By the previous lemma, we know that there is at least one heat kernel close to the Euclidean heat kernel. 
The following lemma shows that any other fundamental solution with sufficient decay is close to this heat kernel by parabolic Schauder estimates.

\begin{Lemma}
\label{le:DiffFundSolSmall}
Let $\epsilon > 0$, $C_d>0$, and $0 < \alpha < 1$ be given. Then there exists an $R=R(n,\alpha,C_d,\epsilon)$ as follows. Suppose $\Gamma_1$ and $\Gamma_2$ are two fundamental solutions for the operator $L$ on $P_{R,4}(0)$, satisfying for $i=1,2$, and $0 < t \leq 4$,
\begin{equation}
\label{eq:ExpDecGammai}
\Gamma_i(x,t;y) \leq \frac{C_d}{t^{n/2}} \exp\left[ - \frac{|x-y|^2}{8t} \right].
\end{equation}
Then 
\begin{equation}
\label{eq:EstXSmall}
\| \Gamma_1(.,.;y) - \Gamma_2(.,.;y) \|_{C^{2,\alpha}(P_{2,4})} \leq \epsilon,
\end{equation}
for all $y \in B_R(0)$, and
\begin{equation}
\label{eq:EstXiSmall}
\left\| \Gamma_1(.,.;y) - \Gamma_2(.,.;y) \right\|_{C^{2,\alpha}(P_{R-1,4})} \leq \epsilon,
\end{equation}
for all $y \in B_2(0)$.

\end{Lemma}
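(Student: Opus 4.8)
The plan is to prove Lemma~\ref{le:DiffFundSolSmall} in two movements: first establish that the difference $w := \Gamma_1(\cdot,\cdot;y) - \Gamma_2(\cdot,\cdot;y)$ is a solution of $Lw = 0$ with a small and rapidly decaying ``initial trace'', and then invoke interior parabolic Schauder estimates to upgrade this smallness in an integral / sup sense to smallness in $C^{2,\alpha}$ on the relevant parabolic cylinders. The key observation is that, although each $\Gamma_i$ has a singularity at $t=0$, $x=y$, the \emph{difference} $w$ is a genuine (bounded, classical) solution of $Lw=0$ on $P_{R,4}(0)$ away from the diagonal, and in fact extends across $t=0$ as the zero solution once $|x-y|$ is bounded below, because both fundamental solutions have the same $\delta_y$ as initial data. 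So on any cylinder $P_{2,4}(0)$ with $y\in B_R(0)$ and $R$ large, the point $y$ is far from $B_2(0)$, the exponential decay bound \pref{eq:ExpDecGammai} forces $|w(x,t)|\le 2C_d t^{-n/2}e^{-|x-y|^2/(8t)}$, and this is uniformly tiny on $P_{2,4}(0)$ when $R=R(n,\alpha,C_d,\epsilon)$ is chosen large enough (the worst case being $t\approx 4$, where the bound is $\lesssim C_d e^{-R^2/32}$). The symmetric statement \pref{eq:EstXiSmall}, with the roles of $x$ and $y$ swapped, uses the same decay but now requires exploiting the symmetry of the fundamental solution (or, equivalently, that $\Gamma_i(x,t;y)$ also decays in the $y$-variable for fixed $x$, which follows from the fact that $\Gamma_i$ is the kernel of a self-adjoint semigroup, or can be extracted directly from the construction in Lemma~\ref{le:FundSolCloseToEucl}).

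**The Schauder step** then proceeds as follows. Fix $y$ as above and regard $w$ as a solution of the linear parabolic equation $w_t - a^{ij}\partial_i\partial_j w = 0$ on $P_{R,4}(0)$, where the coefficients $a^{ij}$ satisfy the uniform ellipticity and $C^\alpha$ bounds \pref{eq:BoundsOna}. Interior parabolic Schauder estimates (cf.~\cite[Ch.~4, Theorem~4]{friedman_partial_2008}) give, for any subcylinder $P' \Subset P''$ compactly contained in $P_{R,4}(0)$,
\[
\|w\|_{C^{2,\alpha}(P')} \le C(n,\alpha,Q,P',P'')\, \|w\|_{L^\infty(P'')}.
\]
Applying this with $P'' = P_{3,4}(0)$ (say) and $P' = P_{2,4}(0)$, and feeding in the $L^\infty$ bound $\|w\|_{L^\infty(P_{3,4})} \le 2C_d\, \sup_{|x|\le 3,\, t\le 4} t^{-n/2}e^{-|x-y|^2/(8t)} \lesssim C(n,C_d)\, e^{-cR^2}$ for $y\in B_R(0)$, we obtain \pref{eq:EstXSmall} once $R$ is large. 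Here one must be slightly careful that the Schauder constant does not itself blow up: it depends only on $n$, $\alpha$, the ellipticity constant $Q$ (bounded by $\sqrt 2$), the $C^\alpha$ norm of the coefficients (bounded by $Q-1$), and the fixed geometry of $P'\Subset P''$, so it is a harmless constant $C(n,\alpha)$. For \pref{eq:EstXiSmall}, one works on cylinders $P_{R-1,4}(0) \Subset P_{R,4}(0)$; the subtlety is that the inner and outer radii now grow with $R$, but the gap between them stays equal to $1$, so the Schauder constant stays bounded (parabolic interior estimates are local and translation-covariant), and the $L^\infty$ smallness on the outer cylinder comes from the decay in the $y$-variable with $y\in B_2(0)$.

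**The main obstacle** I anticipate is the bookkeeping around \emph{where} the difference $w$ is actually a bona fide solution with the claimed $L^\infty$ bound. One has to justify that $w$ extends continuously (indeed smoothly) to $\{t=0\}$ on the region where $x$ is away from $y$, with boundary value $0$; this is exactly the statement that a fundamental solution is unique given its decay, and it is clean only because both $\Gamma_1,\Gamma_2$ are required to satisfy the same defining limit $\lim_{t\downarrow0}\int \Gamma_i(x,t;y)f(x)\,dx = f(y)$ together with the Gaussian upper bound \pref{eq:ExpDecGammai}. Concretely: the decay bound lets us control $w$ near $t=0$ for $x$ bounded away from $y$ by $2C_d t^{-n/2}e^{-\delta^2/(8t)}\to 0$, so the singularities cancel and $w$ is uniformly small on the \emph{closed} parabolic cylinder including $t=0$. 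A secondary, more technical nuisance is establishing the $y$-decay needed for \pref{eq:EstXiSmall}; the cleanest route is to note that the construction of Lemma~\ref{le:FundSolCloseToEucl} produces the \emph{symmetric} fundamental solution (since $Z(x,t;y)$ depends symmetrically on $x,y$ up to the $C^\alpha$-small perturbation, and the parametrix series inherits enough symmetry), or alternatively to invoke that any fundamental solution satisfying \pref{eq:ExpDecGammai} equals this symmetric one by the uniqueness just discussed — reducing \pref{eq:EstXiSmall} to \pref{eq:EstXSmall} with $x,y$ interchanged. Modulo these points, which are standard in the parabolic-PDE literature, the proof is a direct application of Schauder estimates to an exponentially small quantity.
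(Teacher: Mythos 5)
There is a genuine gap, and it sits exactly where your proposal leans on ``symmetry'' instead of an argument. First, a misreading: \pref{eq:EstXSmall} is claimed for \emph{all} $y\in B_R(0)$, which includes poles $y$ inside or near $B_2(0)$; your direct argument (bound $\|w\|_{L^\infty(P_{3,4})}$ by the Gaussian decay and apply interior Schauder on $P_{2,4}\Subset P_{3,4}$) only works when the pole is far from $B_3(0)$, since the bound \pref{eq:ExpDecGammai} gives nothing when $|x-y|$ is small --- there it only yields $|w|\le 2C_d t^{-n/2}$, which blows up. The same problem is fatal for your treatment of \pref{eq:EstXiSmall}: you need $\|w\|_{L^\infty(P_{R,4})}$ to be small, but the cylinder $P_{R-1,4}(0)$ \emph{contains} the pole $y\in B_2(0)$, so ``decay in the $y$-variable'' cannot produce smallness for $x$ close to $y$ at small $t$. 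Note also that \pref{eq:EstXiSmall} is \emph{not} the $x\leftrightarrow y$ swap of \pref{eq:EstXSmall} (the norm is still taken in the $(x,t)$-variables, only over a larger cylinder), the problematic configuration ``$x$ near $y$, $t$ small'' is invariant under swapping $x$ and $y$, and for the non-divergence operator $L=\partial_t-a^{ij}(x)\partial_{x^i}\partial_{x^j}$ with merely H\"older coefficients an arbitrary fundamental solution satisfying \pref{eq:ExpDecGammai} need not be symmetric, nor equal to the one built in Lemma \ref{le:FundSolCloseToEucl} (on a bounded cylinder fundamental solutions are not unique --- they differ by their lateral boundary behavior, which is precisely what the lemma must quantify). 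So the symmetry/self-adjointness/uniqueness escape routes you sketch do not close the hole.

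The missing idea, which is the heart of the paper's proof, is a maximum-principle (uniqueness-of-the-Cauchy--Dirichlet-problem) step: since $\Gamma_1$ and $\Gamma_2$ have the same initial trace $\delta_y$, the difference $w$ is controlled on the whole cylinder $B_R(0)\times[0,4]$ by its values on the \emph{lateral} boundary $\partial B_R(0)\times[0,4]$ alone, with no contribution from $\{t=0\}$; on that lateral boundary $|x-y|\ge R-2$ for $y\in B_2(0)$ (more generally $y\in B_{R_1}$), so \pref{eq:ExpDecGammai} makes $w$ uniformly small there once $R$ is large. Combining this with interior Schauder estimates on a covering of $P_{R-1,4}(0)$ by unit-scale cylinders (your correct observation that the Schauder constant is uniform because the gap is fixed) yields \pref{eq:EstXiSmall}, and since $P_{2,4}\subset P_{R-1,4}$ this also supplies \pref{eq:EstXSmall} for the poles with $|y|$ small or moderate that your first step leaves uncovered. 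Your remarks about $w$ extending by zero across $t=0$ away from the pole are correct and are part of justifying this maximum-principle step, but as written you only use them where the pole lies outside the cylinder; the case where the pole lies inside --- the only genuinely delicate case --- is not handled by your proposal.
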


\begin{proof}
Consider $\Gamma_1 - \Gamma_2$. By interior parabolic estimates \cite[Ch. 4, Theorem 4]{friedman_partial_2008} and the maximum principle, there is a $C_1 = C_1(n,\alpha)$ such that
\[
\left\|\Gamma_1(.,.;y) - \Gamma_2(.,.;y)\right\|_{C^{2,\alpha}(P_{2,4}(0))}
\leq C_1(n,\alpha) \left\|\Gamma_1(.,.;y) - \Gamma_2(.,.;y )\right\|_{C^0(\partial B_{3}(0) \times [0,4])}.
\]
Because of the decay (\ref{eq:ExpDecGammai}), for $|y|\geq R_1 = R_1(n,\alpha,C_d,\epsilon)$,
\[
\left\|\Gamma_1(.,.;y) - \Gamma_2(.,.;y )\right\|_{C^0(\partial B_{3}(0) \times [0,4])} < \frac{\epsilon}{C_1(n,\alpha)},
\]
so that (\ref{eq:EstXSmall}) holds for $|y| \geq R_1$.

Similarly, there exists a constant $C_2(n,\alpha)$ such that for all $x \in B_{R-1}(0)$,
\[
\left\|\Gamma_1(.,.;y) - \Gamma_2(.,.;y)\right\|_{C^{2,\alpha}(P_{1/2,4}(x))}
\leq C_2(n,\alpha) \left\|\Gamma_1(.,.;y) - \Gamma_2(.,.;y )\right\|_{C^0(P_{1,4}(x))}.
\]
It follows by this bound and the maximum principle that there is a constant $C_3 = C_3(n,\alpha)$, such that for all $R \geq 2$, 
\[
\left\|\Gamma_1(.,.;y) - \Gamma_2(.,.;y) \right\|_{C^{2,\alpha}(P_{R-1,4}(0))} 
\leq C_3(n,\alpha) \left\|\Gamma_1(.,.;y) - \Gamma_2(.,.;y)\right\|_{C^0(\partial B_{R}(0) \times [0,4])}.
\]
Consequently, if $|y| < R_1$, and $R \geq R_2=R_2(n,\alpha,C_d,\epsilon)$, (\ref{eq:EstXiSmall}) holds, and (\ref{eq:EstXSmall}) also holds for $|y| \leq R_1$.
\end{proof}

\appendix
\section{Quantitative Harmonic Radius Estimates}

In \cite{anderson_compactness_1992} Anderson and Cheeger prove that the $C^\alpha$-harmonic radius can be bounded uniformly from below in the class of $n$-dimensional manifolds with a fixed lower bound for the Ricci curvature and injectivity radius. 
We used this result to bound the number of eigenfunctions or heat kernels needed to embed a manifold in Euclidean space.
However, the proof by Anderson and Cheeger is qualitative, and does not yield an estimate for the harmonic radius. 
Consequently, we do not yet have a quantitative estimate of the number of eigenfunctions or heat kernels needed.

In contrast, under sectional curvature bounds rather than Ricci curvature lower bounds, Jost and Karcher \cite{jost_geometrische_1982} explicitly constructed harmonic coordinates with quantitative estimates. 

The purpose of this appendix is therefore to make a (more) quantitative estimate on the harmonic radius, under the assumptions of the paper by Anderson and Cheeger. 
The ideas we use are similar to ideas used by Cheeger and Colding in \cite{cheeger_lower_1996}, the series of papers by Colding \cite{colding_shape_1996,colding_large_1996,colding_ricci_1997}, and by Cheeger and Colding \cite{cheeger_structure_1997,cheeger_structure_2000,cheeger_structure_2000-1}.
However, where these papers typically argue from a local volume lower bound, we assume a lower bound on the injectivity radius.
In this less general setting, it is possible to show H\"{o}lder continuity of the metric in terms of distance function coordinates.

\subsection{Setup and construction of harmonic coordinates}
\label{se:HRsetup}

Throughout the appendix, $(M,g)$ will be in the class $\mathcal{M}(n,\Lambda,D)$ of smooth Riemannian manifolds with or without boundary such that
\begin{equation}
\Ric \geq - (n-1) \Lambda^2, \quad (\Lambda > 0), \qquad \inj_M \geq \iota.
\end{equation}
For two points $p,q \in M$ that are nonconjugate, we will let $\gamma_{p,q}:[0,\infty)\to M$ denote the (continuation of the) minimizing geodesic from $p$ to $q$ with arclength parametrization.
We will select an arbitrary point $p \in M$ such that $d(p,\partial M) > \iota$.
One of the goals is to construct harmonic coordinates on a ball $B_r(p)$ for some radius $r$ that we can estimate from below, and give bounds on the H\"{o}lder continuity of the metric coefficients.

We will select points $p_1, \dots, p_n$ such that for $i = 1, \dots, n$,
\begin{equation}
\frac{3\iota}{16} \leq d(p_i,p) \leq \frac{\iota}{4}.
\end{equation}
The first goal is to obtain regularity for the Busemann functions $\rho_{p_i}:=d(p_i,.)$. 
The lower bound on the Ricci curvature and injectivity radius immediately imply a supremum bound on the Laplacian of the $\rho_{p_i}$. 
Indeed, Lemma 1.4 of \cite{anderson_compactness_1992} states that if $\Ric \geq -(n-1) \Lambda^2$ ($\Lambda > 0$) as long as $\rho_{p_1} \leq \iota/2$,
\begin{equation}
\label{eq:BoundLaplRho}
|\Delta \rho_{p_1}| \leq (n-1) \Lambda \coth( \Lambda \rho_{p_1} ).
\end{equation}
For completeness, we have repeated the proof of this bound at the end of this section.
We will combine this supremum bound with the Bochner formula to show that the average squared norm of the Hessian of distance functions is small. By the segment inequality we may from this extract H\"{o}lder bounds on $g(\nabla \rho_{p_i}, \nabla \rho_{p_j})$ by a Morrey / Campanato argument.

Subsequently, we choose the points $p_i$ in such a way that at the origin, the gradients $\nabla \rho_{p_i}$ form an orthonormal system.
Afterwards, we construct harmonic coordinates $b_i$ by solving the following Dirichlet problems
\begin{equation}
\begin{cases}
\Delta b_i = 0, & \text{ on } B_r(p),\\
b_i  = \rho_i, & \text{ on } \partial B_r(p).
\end{cases}
\end{equation}

Because the supremum norm estimate (\ref{eq:BoundLaplRho}) on the Laplacian of Busemann functions is central to the estimates that follow below, we will now include its proof for completeness.

\begin{Lemma}[{Anderson and Cheeger \cite{anderson_compactness_1992}}]
Let $(M,g) \in \mathcal{M}(n,\Lambda,\iota)$ be a smooth Riemannian manifold. Let $p_1 \in M$.
Moreover, assume either
\begin{itemize}
\item $\rho_{p_1} < \iota / 2$ and $d(p_1,\partial M) \geq \iota$, or 
\item for some $p \in M$, $\rho_p \leq \iota/4$, $d(p,\partial M) \geq \iota$, and $d(p_1,p) \leq \iota /4$.
\end{itemize}
Then
\begin{equation}
|\Delta \rho_{p_1}| \leq (n-1) \Lambda \coth( \Lambda \rho_{p_1} ).
\end{equation}
\end{Lemma}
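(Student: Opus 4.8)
The plan is to establish the bound pointwise, so fix a point $q$ at which we wish to estimate $\Delta\rho_{p_1}$. First I would observe that under either hypothesis one has $d(p_1,q)<\iota$ — in the second case $d(p_1,q)\le d(p_1,p)+\rho_p(q)\le\iota/2$ — and that a ball around $q$ of definite radius stays in the interior of $M$. Since $d(p_1,q)<\iota\le\inj_M\le\inj(p_1)$, the point $q$ lies outside the cut locus of $p_1$, so $\rho_{p_1}=d(p_1,\cdot)$ is smooth near $q$ and $\Delta\rho_{p_1}(q)$ is the ordinary Laplacian. Write $\ell:=\rho_{p_1}(q)$ and let $\gamma=\gamma_{p_1,q}$ be the unit-speed geodesic with $\gamma(0)=p_1$ and $\gamma(\ell)=q$; it is minimizing at least on $[0,\iota]$.

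For the upper bound $\Delta\rho_{p_1}(q)\le(n-1)\Lambda\coth(\Lambda\ell)$ I would invoke the Laplacian comparison theorem, which uses only $\Ric\ge-(n-1)\Lambda^2$. Concretely: with a parallel orthonormal frame $E_1,\dots,E_{n-1}$ of $\dot\gamma^{\perp}$ along $\gamma$, one has $\Delta\rho_{p_1}(q)=\sum_k I_\ell(J_k,J_k)$, where $J_k$ is the Jacobi field along $\gamma$ with $J_k(0)=0$, $J_k(\ell)=E_k(\ell)$ and $I_\ell$ is the index form; since $\gamma$ has no conjugate points on $(0,\ell]$, the index lemma gives $I_\ell(J_k,J_k)\le I_\ell(W_k,W_k)$ for the comparison fields $W_k(s):=\frac{\sinh(\Lambda s)}{\sinh(\Lambda\ell)}E_k(s)$, and summing over $k$, integrating by parts, and using $\sum_k\langle R(W_k,\dot\gamma)\dot\gamma,W_k\rangle\ge-\bigl(\tfrac{\sinh(\Lambda s)}{\sinh(\Lambda\ell)}\bigr)^2(n-1)\Lambda^2$ yields $\Delta\rho_{p_1}(q)\le(n-1)\Lambda\coth(\Lambda\ell)$. (Alternatively one can trace the Riccati equation $A'+A^2+R(\cdot,\dot\gamma)\dot\gamma=0$ for $A=\Hess\rho_{p_1}|_{\dot\gamma^{\perp}}$, use $\operatorname{tr}(A^2)\ge(\operatorname{tr}A)^2/(n-1)$ together with the Ricci bound to get $h'\le-h^2/(n-1)+(n-1)\Lambda^2$ for $h(s)=\Delta\rho_{p_1}(\gamma(s))$, and compare with the model solution $(n-1)\Lambda\coth(\Lambda s)$ using the matching asymptotics $h(s)=(n-1)/s+O(s)$ as $s\downarrow0$.)

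For the lower bound I would use a reflection trick. Because $\ell<\iota/2$ we have $2\ell<\iota\le\inj_M$, so $\gamma$ is still minimizing on $[0,2\ell]$, and the room to the boundary supplied by the hypotheses keeps $\gamma([0,2\ell])$ in the interior of $M$. Set $\tilde p:=\gamma(2\ell)$; then $d(p_1,\tilde p)=2\ell$, $d(\tilde p,q)=\ell$, and $q$ lies outside the cut locus of $\tilde p$, so $d(\tilde p,\cdot)$ is smooth near $q$. The triangle inequality gives $\rho_{p_1}(x)\ge 2\ell-d(\tilde p,x)$ for every $x$, with equality at $x=q$; hence $x\mapsto\rho_{p_1}(x)-\bigl(2\ell-d(\tilde p,x)\bigr)$ is a smooth nonnegative function near $q$ vanishing at $q$, so it has a local minimum there and its Laplacian at $q$ is $\ge0$. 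Thus $\Delta\rho_{p_1}(q)\ge-\Delta d(\tilde p,\cdot)(q)$, and applying the upper bound already proven to the distance function $d(\tilde p,\cdot)$ at $q$ — where it takes the value $\ell$ — gives $\Delta d(\tilde p,\cdot)(q)\le(n-1)\Lambda\coth(\Lambda\ell)$, whence $\Delta\rho_{p_1}(q)\ge-(n-1)\Lambda\coth(\Lambda\ell)$. Combining the two estimates and recalling $\ell=\rho_{p_1}(q)$ finishes the proof.

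I expect the only real bookkeeping — and the only place the precise numerology ($\iota/2$, $\iota/4$, $d(\cdot,\partial M)\ge\iota$) enters — to be verifying that all the geodesics in play (from $p_1$ to $q$, its extension to $\tilde p$, and from $\tilde p$ to $q$) are minimizing and remain in the interior of $M$, so that $\rho_{p_1}$ and $d(\tilde p,\cdot)$ are genuinely smooth near $q$ and the maximum-principle comparison is legitimate; in the second hypothesis one first reduces to the first case via $d(p_1,q)\le\iota/2$. A secondary technical point, present only in the Riccati version of the upper bound, is justifying the comparison across the singularity at $s=0$, which the index-form argument sidesteps entirely.
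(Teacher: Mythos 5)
Your proof is correct and follows essentially the same route as the paper: the upper bound by Laplacian comparison, and the lower bound by extending the minimizing geodesic to the point $\tilde p=\gamma(2\ell)$ (the paper's $y=\gamma_{p_1,x}(2d)$) and using that $\rho_{p_1}+d(\tilde p,\cdot)$ attains its minimum along the geodesic, hence has nonnegative Laplacian at $q$. The only difference is cosmetic: you spell out the index-form/Riccati proof of the comparison theorem, which the paper simply invokes.
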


\begin{proof}
We only prove the statement under the first assumption, as the proof under the other assumption is completely similar. By Laplacian comparison, on $B_\iota(p_1)$, 
\[
\Delta \rho_{p_1} \leq (n-1) \Lambda \coth(\Lambda \rho_{p_1}).
\]
Let $x \in B_{\iota/2}(p_1)$, denote $d = d(p_1,x)$, and consider $y = \gamma_{p_1,x}(2d)$.
Then also
\[
\Delta \rho_y \leq (n-1) \Lambda \coth(\Lambda \rho_y).
\]
By the triangle inequality, $\rho_{p_1} + \rho_{y}$ is minimal on $\gamma_{p_1,y}$. 
Hence, at the point $x$,
\[
0 \leq \Delta \rho_{p_1} + \Delta \rho_y \leq \Delta \rho_{p_1} + (n-1) \Lambda \coth(\Lambda \rho_y),
\]
Since $\rho_y(x) \geq \rho_{p_1}(x)$,
\[
-(n-1) \Lambda \coth(\Lambda \rho_{p_1}(x)) \leq \Delta \rho_{p_1}(x),
\]
from which the bound follows.
\end{proof}

\subsection{Small average Hessian of distance functions}
\label{se:SmallHessian}

Let $(M,g)\in \mathcal{M}(n,\Lambda,\iota)$ and $p \in M$ be as in Section \ref{se:HRsetup}.
Let $r < \iota / 8$ and let $p_1 \in M$ be such that $\iota/8 < d(p_1,p) < \iota/4$.
Write $\rho = \rho_{p_1} = d(p_1, . )$.

By the Bochner-Weitzenbock formula
\begin{equation}
0 = \Delta |\nabla \rho|^2 = |\Hess_\rho|^2 + ( \nabla \Delta \rho, \nabla \rho ) + \Ric(\nabla \rho, \nabla \rho).
\end{equation}
As in \cite{anderson_compactness_1992}, after integration of this formula and integration by parts,
\begin{equation}
\int_{B_r}|\Hess_\rho|^2 \leq (n-1) \Lambda^2 \Vol(B_r) + \int_{B_r} (\Delta \rho)^2 + \int_{\partial B_r} |\Delta \rho|.
\end{equation}
We multiply both sides of the inequality by $r^2/\Vol(B_r(p))$ and use the estimate (\ref{eq:BoundLaplRho}) to obtain
\begin{equation}
\begin{split} 
\frac{r^{2}}{\Vol(B_r(p))} & \int_{B_r(p)} |\Hess_\rho|^2  \\ & \leq (n-1) (\Lambda r)^{2} +  (n-1)^2 (\Lambda r)^2 \| \coth (\Lambda \rho) \|^2_{L^\infty(B_r(p))}\\
&\quad + r \frac{\Vol(\partial B_r(p))}{\Vol(B_r(p))} (n-1) (\Lambda r) \| \coth( \Lambda \rho)\|_{L^\infty(B_r(p))}.
\end{split}
\end{equation}
By Bishop-Gromov volume comparison, we know that
\begin{equation}
\begin{split}
\frac{\Vol(B_r(p))}{\Vol(\partial B_r(p))} =  \frac{\int_0^r \Vol(\partial B_s(p)) ds}{\Vol(\partial B_r(p))} \geq \frac{\int_0^r \Vol_\Lambda(\partial B_s) ds}{\Vol_\Lambda(\partial B_r)} = \frac{\Vol_\Lambda(B_r)}{\Vol_\Lambda(\partial B_r)},
\end{split}
\end{equation}
where $\Vol_\Lambda(\partial B_r)$ and $\Vol_\Lambda(B_r)$ denote respectively the volume of the boundary of a ball of radius $r$, and the volume of a ball of radius $r$, in the simply connected model case of constant sectional curvature $-\Lambda^2$.
Hence, 
\begin{equation}
\label{eq:HessSquared}
\begin{split}
\frac{r^{2}}{\Vol(B_r(p))} & \int_{B_r(p)} |\Hess_\rho|^2 
\\ & \leq 
(n-1) (\Lambda r)^{2} 
+  (n-1)^2 (\Lambda r)^2 \|\coth (\Lambda \rho)\|^2_{L^\infty(B_r(p))} \\
& \quad + r \frac{\Vol_\Lambda(\partial B_r)}{\Vol_\Lambda (B_r)} (n-1) (\Lambda r) \|\coth( \Lambda \rho)\|_{L^\infty(B_r(p))} \\
&=: (\Lambda r) F(n, \Lambda r, \| \coth(\Lambda \rho) \|_{L^\infty(B_r(p))}),
\end{split}
\end{equation}
where the last line serves to define $F$, which is nondecreasing in its second and third arguments. 
As the left-hand side is dimensionless, we expect that this will imply certain regularity.

To obtain a (crude) estimate on $F$, recall that
\begin{equation}
\Vol_\Lambda(\partial B_r) = \Omega_n \frac{\sinh^{n-1}(\Lambda r)}{\Lambda^{n-1}},
\end{equation}
where $\Omega_n$ is the total solid angle in the simply connected hyperbolic model space of constant sectional curvature $-1$. 
Since $\sinh^{n-1}(.)$ is convex, we may apply Jensen's inequality to conclude
\begin{equation}
\begin{split}
\Vol_\Lambda(B_r) &= \Omega_n \int_0^r \frac{\sinh^{n-1}(\Lambda s)}{\Lambda^{n-1}}ds \\
&= \frac{\Omega_nr}{\Lambda^{n-1}}\frac{1}{ \Lambda r} \int_0^{\Lambda r} \sinh^{n-1} u \, du \\
&\geq \frac{\Omega_n r}{\Lambda^{n-1}} \sinh^{n-1}\left( \frac{1}{\Lambda r} \int_0^{\Lambda r} u \, du \right)\\
&\geq \frac{\Omega_n r}{\Lambda^{n-1}} \sinh^{n-1}(\Lambda r/2).
\end{split}
\end{equation}
Therefore,
\begin{equation}
r \frac{\Vol_\Lambda(\partial B_r)}{\Vol_\Lambda (B_r)} \leq \frac{\sinh^{n-1}(\Lambda r)}{\sinh^{n-1}(\Lambda r / 2)} = 2^{n-1}\cosh^{n-1}(\Lambda r / 2),
\end{equation}
and the following estimate holds for $F$:
\begin{equation}
\begin{split}
F(n&, \Lambda r, \| \coth(\Lambda \rho) \|_{L^\infty(B_r(p))}) \\ 
& \leq (n-1) \Lambda r +  (n-1)^2 \Lambda r \|\coth^2 (\Lambda \rho) \|_{L^\infty(B_r(p))}\\
& \quad +  2^{n-1} (n-1) \cosh^{n-1}(\Lambda r / 2) \| \coth( \Lambda \rho) \|_{L^\infty(B_r(p))}.
\end{split}
\end{equation}

\subsection{Segment inequality}

The inequality (\ref{eq:HessSquared}) will imply H\"{o}lder regularity of the inner products of gradients of distance functions by a Morrey / Campanato argument that we will present in the next section.
A technical ingredient that we will use is the segment inequality as introduced by Cheeger and Colding \cite{cheeger_lower_1996}.

Let $A_1$ and $A_2$ be two Borel sets on a Riemannian manifold $(M,g)$, which satisfies 
\begin{equation}
\Ric \geq -(n-1) \Lambda^2.
\end{equation}
Let $W \subset M$ be an open subset such that for every $y_1 \in A_1$ and $y_2 \in A_2$ the minimal geodesic $\gamma_{y_1,y_2}$ (with arclength parametrization) is contained in $W$.
Let $h$ be a nonnegative integrable function on $W$.
Also assume for simplicity that $W$ does not contain a pair of conjugate points.
The segment inequality states that
\begin{equation}
\label{eq:segment}
\begin{split}
\int_{A_1 \times A_2} & \int_0^{d(y_1,y_2)} h(\gamma_{y_1,y_2}(s))ds \\ & \leq c(n,\Lambda D)
  D [ \Vol(A_1) +  \Vol(A_2)] \int_W h,
\end{split}
\end{equation}
where
\begin{equation}
D := \sup_{y_1 \in A_1, y_2 \in A_2} d(y_1,y_2),
\end{equation}
and 
\begin{equation}
c(n,\Lambda s) := \sup_{0< s/2 \leq u \leq s} \frac{\Vol_\Lambda(\partial B_s)}{\Vol_\Lambda(\partial B_u)} = 2^{n-1} \cosh^{n-1}\left(\frac{\Lambda s}{2} \right).
\end{equation}
\subsection{H\"{o}lder continuity of inner product gradient of distance functions}

We select points $p_1, p_2 \in M$ such that $3\iota/16 < d(p_i,p) \leq \iota/4$. 
The main result in this section is the H\"{o}lder continuity of the inner product of the gradients of two distance functions $\rho_i:= \rho_{p_i}$, as expressed by the following lemma.

\begin{Lemma}
\label{le:Holder}
Let $(M,g) \in \mathcal{M}(n, \Lambda, \iota)$ and $p\in M$ be as above. 
Let $x_1, x_2 \in B_{\iota/64}(p)$.
Then,
\begin{equation}
|g(\nabla \rho_1, \nabla \rho_2)(x_1) - g(\nabla \rho_1, \nabla \rho_2)(x_2) | \leq C(n, \Lambda d(x_1,x_2), \Lambda \iota) \Lambda^{1/2} d(x_1,x_2)^{1/2},
\end{equation}
where
\begin{equation}
\label{eq:DefC}
C(n, \Lambda r, \Lambda \iota) = 6 \left(12 \frac{\Vol_{\Lambda}(B_{4 r})}{\Vol_\Lambda(B_{r})} c(n, 3 \Lambda r ) F(n, 3 \Lambda r, \coth(\Lambda \iota/16))\right)^{1/2}.
\end{equation}
\end{Lemma}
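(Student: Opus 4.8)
The plan is to prove Hölder continuity of $f := g(\nabla\rho_1,\nabla\rho_2)$ by a Morrey/Campanato argument in which the scale-invariant $L^2$-Hessian bound \pref{eq:HessSquared} plays the role of a gradient bound and the segment inequality \pref{eq:segment} plays the role of the Euclidean Poincaré inequality. Throughout, for $q\in B_{\iota/64}(p)$ and $r$ small, one uses that on $B_{2r}(q)$ the functions $\rho_i$ take values between roughly $\iota/16$ and $\iota/2$, so the relevant minimizing geodesics from the $p_i$ stay inside the injectivity radius and away from $\partial M$, the Laplacian bound \pref{eq:BoundLaplRho} applies (via its second hypothesis, with the auxiliary point $p$), and $\coth(\Lambda\rho_i)\le\coth(\Lambda\iota/16)$; hence \pref{eq:HessSquared} is valid on such balls.

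First I would observe the pointwise inequality $|\nabla f| = |\Hess_{\rho_1}(\nabla\rho_2,\cdot)+\Hess_{\rho_2}(\nabla\rho_1,\cdot)|\le |\Hess_{\rho_1}|+|\Hess_{\rho_2}|$, using $|\nabla\rho_i|\equiv 1$. Combining this with \pref{eq:HessSquared} on $B_{2r}(q)$ and Cauchy--Schwarz gives
\[
\int_{B_{2r}(q)}|\nabla f|\le \Vol(B_{2r}(q))^{1/2}\Big(\int_{B_{2r}(q)}|\nabla f|^2\Big)^{1/2}\le C(n)\,\Vol(B_{2r}(q))\,\Lambda^{1/2}\,r^{-1/2}\,F\big(n,2\Lambda r,\coth(\Lambda\iota/16)\big)^{1/2}.
\]
Next I would convert the segment inequality into an $L^1$-Poincaré inequality: applying \pref{eq:segment} with $A_1=A_2=B_r(q)$, $W=B_{2r}(q)$ and $h=|\nabla f|$, using $\int_0^{d(y_1,y_2)}|\nabla f|(\gamma_{y_1,y_2}(s))\,ds\ge |f(y_1)-f(y_2)|$ together with the elementary bound $\int_{B_r(q)}|f-f_{B_r(q)}|\le \Vol(B_r(q))^{-1}\int_{B_r(q)\times B_r(q)}|f(y_1)-f(y_2)|$ (where $f_{B_r(q)}:=\Vol(B_r(q))^{-1}\int_{B_r(q)}f$), one obtains
\[
\frac{1}{\Vol(B_r(q))}\int_{B_r(q)}\big|f-f_{B_r(q)}\big|\le \frac{4\,c(n,2\Lambda r)\,r}{\Vol(B_r(q))}\int_{B_{2r}(q)}|\nabla f|.
\]
Substituting the previous display and bounding $\Vol(B_{2r}(q))/\Vol(B_r(q))\le \Vol_\Lambda(B_{2r})/\Vol_\Lambda(B_r)$ by Bishop--Gromov yields a Campanato estimate
\[
\frac{1}{\Vol(B_r(q))}\int_{B_r(q)}\big|f-f_{B_r(q)}\big|\le C_1(n,\Lambda r,\Lambda\iota)\,\Lambda^{1/2}\,r^{1/2},
\]
with $C_1$ of exactly the shape of the constant in \pref{eq:DefC}.

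Finally, the classical Campanato iteration finishes the proof. Using volume doubling (again Bishop--Gromov) one bounds $|f_{B_r(q)}-f_{B_{r/2}(q)}|$ by the right-hand side above; since $F$ is nondecreasing in its arguments, summing the resulting geometric series in $(r/2^k)^{1/2}$ shows that $f_{B_r(q)}\to f(q)$ and $|f(q)-f_{B_r(q)}|\le C_2(n)\,C_1(n,\Lambda r,\Lambda\iota)\Lambda^{1/2}r^{1/2}$ for $r$ below a fixed scale. Comparing $f(x_1)$ and $f(x_2)$ through the chains of balls centred at $x_1$ and at $x_2$ up to a common ball of radius comparable to $4d(x_1,x_2)$ then gives the pointwise estimate in the statement, the numerical factors $4$, $3$, $6$ in \pref{eq:DefC} emerging from these chains and the accumulated constants; since $M$ is smooth and there is no cut locus within distance $\iota$ of the $p_i$, $f$ is genuinely smooth on $B_{\iota/64}(p)$ and no choice of representative is needed. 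I expect the only real difficulty to be bookkeeping rather than conceptual: verifying that \pref{eq:HessSquared} and \pref{eq:BoundLaplRho} really do hold on every ball $B_{2r}(q)$ and every ball occurring in the Campanato chains — keeping the geodesics from the $p_i$ inside the injectivity radius, away from $\partial M$, and with $\coth(\Lambda\rho_i)$ dominated by $\coth(\Lambda\iota/16)$ — and then tracking the explicit constants through Cauchy--Schwarz, the segment inequality, the volume comparisons and the Campanato summation so that they assemble into precisely \pref{eq:DefC}. The point worth stressing is that \pref{eq:HessSquared} is scale-invariant, so $C^{1/2}$ continuity is the best one can hope for, and a Campanato/Morrey argument with the segment inequality supplying the Poincaré step is the natural mechanism to extract it.
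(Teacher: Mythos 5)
Your proposal is correct and follows essentially the same route as the paper: a Morrey/Campanato argument in which the scale-invariant Hessian bound \pref{eq:HessSquared} is fed through the segment inequality \pref{eq:segment} and then a dyadic telescoping passes from ball averages to pointwise values. The only difference is organizational -- the paper applies the segment inequality directly to $|\Hess_{\rho_1}|^2+|\Hess_{\rho_2}|^2$ and compares averages over two balls with different centres and radii in one lemma (specialized to $r_2=2r_1$ and $r_1=r_2=d(x_1,x_2)$), whereas you first derive an $L^1$-Poincar\'e inequality for $|\nabla f|$ and then run the standard Campanato iteration -- so your constant has the same shape as \pref{eq:DefC}, though reproducing it literally would require the paper's specific choice of scales.
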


We will prove the lemma at the end of the section. 
First, define $f: M \to \R$ by $f(x) := g(\nabla \rho_1, \nabla \rho_2)(x)$, and set
\begin{equation}
(f)_{x,r} := \frac{1}{\Vol(B_r(x))} \int_{B_r(x)} f.
\end{equation}

Consider two points $x_i \in B_{\iota/64}(p)$, $(i=1,2)$, and radii $r_i < \iota / 32$. 
Let $D := d(x_1,x_2) + r_1 + r_2$.
Set
\begin{equation}
s_1 := \min\left(d(x_1,x_2),\max\left(0, \frac{d(x_1,x_2)}{2} + \frac{r_2 - r_1}{2} \right) \right),
\end{equation}
and $s_2 := d(x_1,x_2) - s_1$. 
\begin{Lemma}
Let $x_i \in B_{\iota/64}(p)$ and $r_i \leq \iota/32$ for $i=1,2$. Then, with the above notation,
\begin{equation}
\label{eq:CompareAv1}
\begin{split}
\left| (f)_{x_1,r_1} - (f)_{x_2,r_2} \right|^2 
& \leq 2 \left[\frac{\Vol_\Lambda(B_{D + s_1})}{\Vol_\Lambda(B_{r_1})} + \frac{\Vol_\Lambda(B_{D+s_2})}{\Vol_\Lambda(B_{r_2})}\right] c(n,\Lambda D) \\
  & \qquad \times F(n,\Lambda D, \coth(\Lambda \iota/16) ) (\Lambda D)
\end{split}
\end{equation}
\end{Lemma}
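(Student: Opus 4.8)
The plan is to compare the two averages by writing $f(y_1)-f(y_2)$ as a line integral of $\nabla f$ along the minimal geodesic from $y_1$ to $y_2$, averaging over $y_1\in B_{r_1}(x_1)$, $y_2\in B_{r_2}(x_2)$, and controlling the resulting triple integral by the segment inequality (\ref{eq:segment}) together with the $L^2$ Hessian bound (\ref{eq:HessSquared}). The starting point is that $\nabla_X\big(g(\nabla\rho_1,\nabla\rho_2)\big)=\Hess_{\rho_1}(\nabla\rho_2,X)+\Hess_{\rho_2}(\nabla\rho_1,X)$ and $|\nabla\rho_i|\equiv 1$, so $|\nabla f|\le|\Hess_{\rho_1}|+|\Hess_{\rho_2}|$; the fundamental theorem of calculus along $\gamma_{y_1,y_2}$ (whose length $L=d(y_1,y_2)$ is at most $D$) gives
\[
|f(y_1)-f(y_2)|\le\int_0^{L}\big(|\Hess_{\rho_1}|+|\Hess_{\rho_2}|\big)(\gamma_{y_1,y_2}(s))\,ds .
\]
Averaging over the two balls and using Cauchy--Schwarz in the product measure, then in the $s$-variable (with $L\le D$) and $(a+b)^2\le 2a^2+2b^2$, I obtain
\[
\big|(f)_{x_1,r_1}-(f)_{x_2,r_2}\big|^2\le\frac{2D}{\Vol(B_{r_1}(x_1))\Vol(B_{r_2}(x_2))}\int_{B_{r_1}(x_1)}\int_{B_{r_2}(x_2)}\int_0^{d(y_1,y_2)}\!\!\big(|\Hess_{\rho_1}|^2+|\Hess_{\rho_2}|^2\big)(\gamma_{y_1,y_2}(s))\,ds .
\]

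Next I would split each geodesic at the parameter $s_1$ (clamped to $[0,d(x_1,x_2)]$ exactly as in the statement): the unclamped value $\tfrac12 d(x_1,x_2)+\tfrac12(r_2-r_1)$ is chosen so that $s_1+r_1=s_2+r_2=D/2$, i.e.\ it balances the two halves and shifts the break point toward the smaller of the two balls. The $x_1$-piece $\gamma_{y_1,y_2}\big|_{[0,s_1]}$ then stays within distance $s_1+r_1=D/2$ of $x_1$, so it lies in $B_{D+s_1}(x_1)$, and likewise the $x_2$-piece lies in $B_{D+s_2}(x_2)$. Applying the segment inequality (\ref{eq:segment}) to each half separately --- with $A_1=B_{r_1}(x_1)$, $A_2=B_{r_2}(x_2)$, diameter bound $D$, integrand $h=|\Hess_{\rho_1}|^2+|\Hess_{\rho_2}|^2$, and $W=B_{D+s_i}(x_i)$ respectively --- and dividing by $\Vol(B_{r_1})\Vol(B_{r_2})$, the estimate collapses to
\[
\big|(f)_{x_1,r_1}-(f)_{x_2,r_2}\big|^2\le 2\,c(n,\Lambda D)\,D\sum_{i=1}^{2}\frac{1}{\Vol(B_{r_i}(x_i))}\int_{B_{D+s_i}(x_i)}\big(|\Hess_{\rho_1}|^2+|\Hess_{\rho_2}|^2\big).
\]

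Finally I would insert the Hessian bound. On each $B_{D+s_i}(x_i)$ I apply (\ref{eq:HessSquared}) (equivalently, its derivation with $p$ replaced by $x_i$ and $r$ by $D+s_i\le\iota/8$) to $\rho_1$ and to $\rho_2$. Two inputs are needed: first, $\|\coth(\Lambda\rho_j)\|_{L^\infty(B_{D+s_i}(x_i))}\le\coth(\Lambda\iota/16)$, which holds because on that ball $d(\cdot,p)<D+s_i+d(x_i,p)\le\iota/8+\iota/64$, so $\rho_j=d(p_j,\cdot)\ge d(p_j,p)-d(\cdot,p)\ge 3\iota/16-(\iota/8+\iota/64)$, and monotonicity of $F$ in its last two arguments then lets us replace the local $\coth$ and the radius by $\coth(\Lambda\iota/16)$ and $\Lambda D$; second, Bishop--Gromov volume comparison, which gives $\Vol(B_{D+s_i}(x_i))/\Vol(B_{r_i}(x_i))\le\Vol_\Lambda(B_{D+s_i})/\Vol_\Lambda(B_{r_i})$. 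Carrying these through, each summand is at most $\tfrac{\Vol_\Lambda(B_{D+s_i})}{\Vol_\Lambda(B_{r_i})}\cdot\tfrac{\Lambda}{D+s_i}\cdot F(n,\Lambda D,\coth(\Lambda\iota/16))$ up to an overall numerical constant, and since $D\cdot\tfrac{\Lambda}{D+s_i}\le\Lambda D$, collecting everything and absorbing the factors of $2$ yields exactly (\ref{eq:CompareAv1}).

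The main obstacle is the bookkeeping in the splitting step: one must verify carefully that after breaking at $s_1$ the two geodesic pieces genuinely lie in the asserted balls $B_{D+s_i}(x_i)$, and that the segment inequality applies with $W$ taken to be precisely these balls, so that the correct \emph{asymmetric} volume ratios $\Vol_\Lambda(B_{D+s_i})/\Vol_\Lambda(B_{r_i})$ and the single constant $c(n,\Lambda D)$ come out with the constants recorded in (\ref{eq:DefC}). The degenerate cases where the clamping of $s_1$ is active (when $|r_2-r_1|$ is large compared with $d(x_1,x_2)$, so the break point slides to an endpoint) must be checked separately, but there the corresponding half of the geodesic is empty and the inequality is only easier. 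Everything else --- the Cauchy--Schwarz steps, the elementary $(a+b)^2\le 2a^2+2b^2$, and the volume comparisons --- is routine.
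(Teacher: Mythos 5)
Your first and last steps coincide with the paper's (the Hessian bound on $\nabla f$, the fundamental theorem of calculus plus Cauchy--Schwarz producing the factor $2D$, and at the end the average Hessian estimate (\ref{eq:HessSquared}) combined with Bishop--Gromov and the monotonicity of $F$), but the middle step has a genuine gap. The segment inequality (\ref{eq:segment}) is a statement about \emph{full} minimizing geodesics: its hypothesis is that $W$ contains every $\gamma_{y_1,y_2}$, and its conclusion carries the symmetric factor $\Vol(A_1)+\Vol(A_2)$. You cannot apply it ``to each half separately'' and conclude that the $x_1$-piece comes with only the single factor $\Vol(A_2)$; applied as stated, each invocation would still produce $[\Vol(A_1)+\Vol(A_2)]\int_{B_{D+s_i}(x_i)}h$, hence cross terms such as $\Vol(B_{r_2}(x_2))^{-1}\int_{B_{D+s_1}(x_1)}h$, which Bishop--Gromov (a comparison for \emph{concentric} balls) does not convert into the ratios $\Vol_\Lambda(B_{D+s_i})/\Vol_\Lambda(B_{r_i})$ of (\ref{eq:CompareAv1}). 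The asymmetric, half-wise estimate you are invoking is not (\ref{eq:segment}) but a refinement of its proof, and there the split must be at the \emph{midpoint} of each $\gamma_{y_1,y_2}$: the area-element comparison yielding $c(n,\Lambda D)$ holds only on the half farther from the base point of the polar coordinates. With your split at the fixed parameter $s_1$, pairs with $d(y_1,y_2)<2s_1$ (which occur as soon as the balls overlap, or when $r_2-r_1$ is comparable to $d(x_1,x_2)$) have the piece $\{s\le s_1\}$ spilling past the midpoint, and the comparison constant degenerates; so the assertion that ``the estimate collapses to'' your asymmetric display is unjustified.

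The paper avoids all of this by never splitting the geodesics. The numbers $s_1,s_2$ enter only through the auxiliary center $x=\gamma_{x_1,x_2}(s_1)$ (equal to $x_1$ or $x_2$ in the clamped cases): one checks that every full geodesic $\gamma_{y_1,y_2}$ lies in the single ball $B_D(x)$, applies (\ref{eq:segment}) once with $W=B_D(x)$, and then uses $B_D(x)\subset B_{D+s_i}(x_i)$ (since $d(x,x_i)=s_i$) together with the concentric Bishop--Gromov inequality to get $\Vol(B_D(x))/\Vol(B_{r_i}(x_i))\le\Vol_\Lambda(B_{D+s_i})/\Vol_\Lambda(B_{r_i})$; combined with (\ref{eq:HessSquared}) applied on $B_D(x)$, this gives (\ref{eq:CompareAv1}) with the constant recorded in (\ref{eq:DefC}). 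If you insist on your splitting, you would have to prove the one-sided segment inequality with midpoint splitting and then verify each half lies in a ball about the corresponding $x_i$ -- that works, but it amounts to reproving Cheeger--Colding rather than citing (\ref{eq:segment}). Your remaining slips are of the constant-chasing kind: the intermediate display after the segment inequality is missing a factor of $D$, and your own lower bound $\rho_j\ge 3\iota/16-(\iota/8+\iota/64)=3\iota/64$ is smaller than $\iota/16$, so the asserted $\coth(\Lambda\iota/16)$ bound needs a slightly adjusted constant (the paper's reduction to the ball $B_D(x)$ is loose in the same way); these are minor compared with the structural issue above.
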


\begin{proof}
Note that for a vector field $X$ on $M$,
\[
\begin{split}
\nabla_X g(\nabla \rho_1, \nabla \rho_2) &= g(\nabla_X \nabla \rho_1, \nabla \rho_2) + g(\nabla \rho_1, \nabla_X \nabla \rho_2)\\
&= \Hess_{\rho_1} (X, \nabla \rho_2) + \Hess_{\rho_2}(\nabla \rho_1, X),
\end{split}
\]
so that for a vector $v\in T_p(M)$, and $\|v \|=1$,
\[
| D_v g(\nabla \rho_1, \nabla \rho_2) | \leq |\Hess_{\rho_1}|  + |\Hess_{\rho_2}|.
\]
We write $\gamma_{y_1,y_2}$ for the minimizing geodesic between $y_1$ and $y_2$, with arclength parametrization.
We estimate
\[
\begin{split}
\left| (f)_{x_1,r_1} - (f)_{x_2,r_2} \right|^2 
& = \left| \frac{1}{\Vol(B_{r_1}(x_1))} \int_{B_{r_1}(x_1)} f(y_1) - \frac{1}{\Vol(B_{r_2}(x_2))} \int_{B_{r_2}(x_2)} f(y_2) \right|^2 \\
& = \left| \frac{1}{\Vol(B_{r_1}(x_1))} \frac{1}{\Vol(B_{r_2}(x_2))} \int_{B_{r_1}(x_1)}\int_{B_{r_2}(x_2)} [f(y_1) - f(y_2)] \right|^2\\
& \leq \frac{1}{\Vol(B_{r_1}(x_1))}\frac{1}{\Vol(B_{r_2}(x_2))} \int_{B_{r_1}(x_1)} \int_{B_{r_2}(x_2)} \left| f(y_1) - f(y_2) \right|^2
\end{split}
\]

Now note that
\[
\begin{split}
\left| f(y_1) - f(y_2)\right|^2 
  &\leq \left| \int_{0}^{d(y_1,y_2)} \partial_s (f (\gamma_{y_1,y_2}(s)))  ds \right|^2 \\
  &\leq d(y_1,y_2) \int_0^{d(y_1,y_2)} |\partial_s(f (\gamma_{y_1,y_2}(s)) )|^2 ds \\
  &\leq d(y_1,y_2) \int_0^{d(y_1,y_2)} \left( |\Hess_{\rho_1}| + |\Hess_{\rho_2}|\right)^2\circ \gamma_{y_1,y_2}(s) ds \\
  &\leq 2 d(y_1,y_2) \int_0^{d(y_1,y_2)} \left( |\Hess_{\rho_1}|^2 + |\Hess_{\rho_2}|^2 \right) \circ \gamma_{y_1,y_2}(s) ds.
\end{split}
\]
Since we defined $D = d(x_1,x_2) + r_1 + r_2$, it holds that
\[
D \geq \sup_{y_1 \in A_1, y_2 \in A_2} d(y_1,y_2).
\]
Set $x = \gamma_{x_1,x_2}(s_1)$, that is,
\[
x =
\begin{cases}
 x_1, & \text{if } r_1 \geq r_2 + d(x_1,x_2),\\ 
 x_2, & \text{if } r_2 \geq r_1 + d(x_1,x_2),\\
 \gamma_{x_1,x_2}\left( \frac{-r_1 + d(x_1,x_2) + r_2}{2} \right), & \text{otherwise},
\end{cases}
\]
and
\[
R =
\begin{cases}
 r_1, & \text{if } r_1 \geq r_2 + d(x_1,x_2),\\ 
 r_2, & \text{if } r_2 \geq r_1 + d(x_1,x_2),\\
 D/2, & \text{otherwise}.
\end{cases}
\]
Note that for two points $y_i \in B_{r_i}(x_i)$ it holds that $d(y_i,x) < R$, $(i=1,2)$ and that the geodesic $\gamma_{y_1,y_2}$ is contained in $B_{D}(x)$.
Then, the segment inequality (\ref{eq:segment}), applied with $h = |\Hess_{\rho_1}|^2 + |\Hess_{\rho_2}|^2$, yields
\[
\begin{split}
\left| (f)_{x_1,r_1} - (f)_{x_2,r_2} \right|^2 
& \leq 2 \left[\frac{1}{\Vol(B_{r_1}(x_1))} + \frac{1}{\Vol(B_{r_2}(x_2))}\right] c(n,\Lambda D) \\
  &\qquad \times D^2 \int_{B_{D}(x)} \left( \left| \Hess_{\rho_1}  \right|^2 + \left| \Hess_{\rho_2}\right|^2\right).
\end{split}
\]
For all $y \in B_{D}(x)$, it holds that $d(y,p) \leq \iota/8$. 
Hence, the inequality (\ref{eq:HessSquared}) implies
\[
\begin{split}
\left| (f)_{x_1,r_1} - (f)_{x_2,r_2} \right|^2 
& \leq 2 \left[\frac{\Vol(B_{D}(x))}{\Vol(B_{r_1}(x_1))} + \frac{\Vol(B_{D}(x))}{\Vol(B_{r_2}(x_2))}\right] c(n,\Lambda D) \\
  & \qquad \times F(n,\Lambda D, \coth(\Lambda \iota / 16)) (\Lambda D).
\end{split}
\]
%Since $s =|r_2-r_1|/2 + d(x_1,x_2)/2$, 
Note that the ball $B_{D}(x)$ is contained in $B_{D+s_i}(x_i)$, for $i=1,2$, so that from the Bishop-Gromov inequality it follows that
\[
\frac{\Vol(B_{D}(x))}{\Vol(B_{r_i}(x_i))} \leq \frac{\Vol(B_{D+s_i}(x_i))}{\Vol(B_{r_i}(x_i))} \leq \frac{\Vol_\Lambda(B_{D+s_i})}{\Vol_\Lambda(B_{r_i})}.
\]
Consequently,
\begin{equation}
\label{eq:CompareAv2}
\begin{split}
\left| (f)_{x_1,r_1} - (f)_{x_2,r_2} \right|^2 
& \leq 2 \left[\frac{\Vol_\Lambda(B_{D+s_1})}{\Vol_\Lambda(B_{r_1})} + \frac{\Vol_\Lambda(B_{D+s_2})}{\Vol_\Lambda(B_{r_2})}\right] c(n,\Lambda D) \\
  & \qquad \times F(n,\Lambda D, \coth(\Lambda \iota/16) ) (\Lambda D)
\end{split}
\end{equation}
\end{proof}

Let us consider two special cases. 
In the first case, $x_1=x_2$, $r_2 = 2 r_1$ and in the second case $r_1 = r_2 = d(x_1,x_2)$, so that in both cases $D = 3r_1$.
Consequently, the following estimate holds for both cases
\begin{equation}
\label{eq:CompareAv3}
\begin{split}
\left| (f)_{x_1,r_1} - (f)_{x_2,r_2} \right|^2 
& \leq 4 \frac{\Vol_\Lambda(B_{4 r_1})}{\Vol_\Lambda(B_{r_1})} c(n,3 \Lambda r_1) \\
  & \qquad \times F(n, 3 \Lambda r_1, \coth(\Lambda \iota/16))  (3 \Lambda r_1).
\end{split}
\end{equation}
With the definition of the constant $C$ in (\ref{eq:DefC}),
\begin{equation}
\label{eq:CompareAv4}
\begin{split}
\left| (f)_{x_1,r_1} - (f)_{x_2,r_2} \right| \leq \frac{1}{6} C(n,\Lambda r_1,\Lambda \iota ) (\Lambda r_1)^{1/2}
\end{split}
\end{equation}

We are now ready to give the proof of Lemma \ref{le:Holder}.

\begin{proof}[Proof of Lemma \ref{le:Holder}]
First, we show that for $0<r<\iota/8$,
\[
|(f)_{x,r} - f(x)| \leq \frac{1}{3} C(n, \Lambda r, \Lambda \iota) (\Lambda r)^{1/2}.
\]
Set $s_i = r 2^{-i}$. 
Then, by (\ref{eq:CompareAv4}) we know that
\[
|(f)_{x,s_i} - (f)_{x,s_{i+1}} | \leq \frac{1}{6} C(n, \Lambda r, \Lambda \iota)(\Lambda r)^{1/2} 2^{-i/2}.
\]
Hence,
\[
|(f)_{x,s_i} - (f)_{x,s_j} | \leq \frac{1}{6} C(n, \Lambda r, \Lambda \iota) (\Lambda r)^{1/2}\left(2^{-(i-1)/2} - 2^{-(j-1)/2} \right).
\]
The claim follows by letting $j\to\infty$.

Now set $r_1 = r_2 = d(x_1,x_2)$. 
Then, again by (\ref{eq:CompareAv3}),
\[
|(f)_{x_1,r} - (f)_{x_2,r}| \leq \frac{1}{3} C(n, \Lambda r, \Lambda \iota) (\Lambda r)^{1/2}.
\]

The Lemma follows with the triangle inequality.
\end{proof}

\subsection{Distance function coordinates}

Let $(M,g) \in \mathcal{M}(n,\Lambda,\iota)$ and $p \in M$ as in Section \ref{se:HRsetup}. 
Pick an orthonormal basis $E_i$ of $T_pM$.
Define the points $p_i := \exp_p((\iota/4) E_i)$.
Set $\rho_i = \rho_{p_i} = d(p_i,.)$. 

\begin{Theorem}[Distance function coordinates]
\label{th:DistCoord}
If $r < \iota/64$ and
\begin{equation}
\label{eq:CondCoord}
C(n,\Lambda r, \Lambda \iota) (\Lambda r)^{1/2} < \frac{1}{2n}, 
\end{equation}
the functions $\rho_i$ ($i=1,\dots,n$) are coordinates on the ball $B_{r} (p)$, and in these coordinates the metric satisfies
\begin{align}
\label{eq:BilinearBounds}
(1 - n C(n, \Lambda r, \Lambda \iota) (\Lambda r)^{1/2}) \delta^{ij} \leq g^{ij} &\leq (1 + n C(n,\Lambda r, \Lambda \iota) (\Lambda r)^{1/2} ) \delta^{ij}, \\
\label{eq:HolderOnBall}
r^{1/2} [ g^{ij} ]_{C^{1/2}(B_{r}(p))} &\leq C(n,\Lambda r, \Lambda \iota) (\Lambda r)^{1/2}.
\end{align}
\end{Theorem}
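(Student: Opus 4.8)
The plan is to show that $\Phi:=(\rho_1,\dots,\rho_n)\colon B_r(p)\to\R^n$ is a diffeomorphism onto its image whose pull-back of the Euclidean structure satisfies \eqref{eq:BilinearBounds}--\eqref{eq:HolderOnBall}; the only real input is the H\"older estimate of Lemma \ref{le:Holder} together with the normalization of the $\rho_i$ at the centre. First I would observe that each $\rho_i$ is smooth on $B_r(p)$: since $d(p_i,p)=\iota/4$ and $r<\iota/64$, every $x\in B_r(p)$ satisfies $d(p_i,x)<\iota/2<\iota\le\inj_M$, so $x$ is neither $p_i$ nor a point of its cut locus, whence $\rho_i\in C^\infty(B_r(p))$ with $|\nabla\rho_i|\equiv1$. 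Moreover, because $p_i=\exp_p((\iota/4)E_i)$ with $\{E_i\}$ orthonormal and $\iota/4<\inj_M$, the unique minimizing geodesic from $p_i$ to $p$ arrives at $p$ with velocity $-E_i$, so $\nabla\rho_i(p)=-E_i$ and hence $g(\nabla\rho_i,\nabla\rho_j)(p)=\delta_{ij}$.

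Next I would apply Lemma \ref{le:Holder} to each pair $(\rho_i,\rho_j)$ with $x_1=x\in B_r(p)$ and $x_2=p$ (both lie in $B_{\iota/64}(p)$ since $r<\iota/64$). Using $d(x,p)<r$ and the fact that $r\mapsto C(n,\Lambda r,\Lambda\iota)(\Lambda r)^{1/2}$ is nondecreasing --- immediate from the formula \eqref{eq:DefC}, as $\cosh^{n-1}$, the ratio $\Vol_\Lambda(B_{4r})/\Vol_\Lambda(B_r)$ and $F$ are each nondecreasing in $r$ --- one obtains, writing $\e_0:=C(n,\Lambda r,\Lambda\iota)(\Lambda r)^{1/2}$,
\[
\bigl|\,g(\nabla\rho_i,\nabla\rho_j)(x)-\delta_{ij}\,\bigr|\ \le\ \e_0\ <\ \frac1{2n}
\qquad\text{for all }x\in B_r(p),\ 1\le i,j\le n,
\]
the last inequality being exactly the hypothesis \eqref{eq:CondCoord}. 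Therefore the Gram matrix $G(x):=\bigl(g(\nabla\rho_i,\nabla\rho_j)(x)\bigr)_{ij}$ is invertible, $\{\nabla\rho_i(x)\}$ is a basis of $T_xM$, and $d\Phi_x$ is an isomorphism; so $\Phi$ is a local diffeomorphism on $B_r(p)$. Since for any $\xi\in\R^n$ one has $\bigl|\sum_{i,j}(G_{ij}(x)-\delta_{ij})\xi_i\xi_j\bigr|\le\e_0(\sum_i|\xi_i|)^2\le n\e_0|\xi|^2$, and since (wherever $\Phi$ is a chart) the inverse-metric coefficients in the $\rho$-coordinates are precisely $g^{ij}=g(d\rho_i,d\rho_j)=g(\nabla\rho_i,\nabla\rho_j)=G_{ij}$, the bound \eqref{eq:BilinearBounds} follows; and \eqref{eq:HolderOnBall} is a direct restatement of Lemma \ref{le:Holder}, up to the harmless monotone adjustment of the first argument of $C$.

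It remains to prove the \emph{global} injectivity of $\Phi$ on $B_r(p)$, which is the one genuinely delicate point. I would introduce the pull-back metric $h:=\Phi^*g_{\mathrm{eucl}}$; from $h(v,v)=|d\Phi_xv|^2=\sum_i g(\nabla\rho_i,v)^2$ and the eigenvalue bounds on $G(x)$ one gets $(1-n\e_0)\,g\le h\le(1+n\e_0)\,g$ on $B_r(p)$, so $h$ is a smooth metric bi-Lipschitz to $g$, and $\Phi$ is tautologically a local isometry from $(B_r(p),h)$ into flat $\R^n$; in particular $\Phi$ sends $h$-geodesics to straight segments. Now take $x,y\in B_r(p)$ with $\Phi(x)=\Phi(y)$ and join them by a length-minimizing $h$-curve $\sigma$ inside the chart: as each $\rho_i$ is $1$-Lipschitz, $\Phi(B_r(p))$ is contained in a cube of side $2r$, so $\sigma$ has bounded $h$-length, and the bi-Lipschitz comparison with $g$ keeps it inside the region on which all the estimates above are valid (this is where the quantitative smallness of $\e_0$ forced by \eqref{eq:CondCoord} is used, possibly after passing to a slightly larger concentric ball, which is harmless since the $\rho_i$ stay smooth well beyond $B_r(p)$). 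Then $\sigma$ is an honest $h$-geodesic, so $\Phi\circ\sigma$ is a straight segment joining $\Phi(x)$ to $\Phi(y)=\Phi(x)$, hence the constant path, which forces the image of $\sigma$ into the discrete set $\Phi^{-1}(\Phi(x))$; so $\sigma$ is constant and $x=y$. I expect this last step --- arranging the constants so that the minimizing $h$-curves never leave the set where Lemma \ref{le:Holder} applies, while keeping track of the dependence on \eqref{eq:CondCoord} --- to be the main obstacle; everything else is the H\"older estimate together with elementary linear algebra.
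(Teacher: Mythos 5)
Your first half is fine and matches the paper: the normalization $g(\nabla\rho_i,\nabla\rho_j)(p)=\delta_{ij}$ together with Lemma \ref{le:Holder} applied to the pair $(x,p)$ gives the entrywise bound, and your quadratic-form estimate (the paper uses Gershgorin instead) yields (\ref{eq:BilinearBounds}), while (\ref{eq:HolderOnBall}) is indeed just Lemma \ref{le:Holder}. The gap is exactly where you expect it: global injectivity. Your argument needs a length-minimizing curve for the pulled-back flat metric $h=\Phi^*g_{\mathrm{eucl}}$ joining $x$ and $y$ that is an honest (unconstrained) $h$-geodesic lying in the region where $\Phi$ is a local isometry. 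Nothing in the hypotheses provides this: $B_r(p)$ is an incomplete metric ball, and with only a Ricci lower bound and an injectivity radius bound there is no convexity radius control, so a minimizing sequence of curves in $B_r(p)$ (or in any slightly larger ball) can converge to a path touching the boundary, where it is a constrained minimizer and not a geodesic, and the ``straight segment with equal endpoints'' conclusion is lost. Enlarging the ball does not repair this: the same boundary problem recurs on the larger ball, Lemma \ref{le:Holder} is only available on $B_{\iota/64}(p)$, and the smallness (\ref{eq:CondCoord}) is assumed only at scale $r$, which may be arbitrarily close to $\iota/64$, so there is genuinely no room to ``pass to a slightly larger concentric ball.'' As stated, the injectivity step is therefore not proved.

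The paper's device avoids the issue entirely and is worth comparing with. Given $x_1\neq x_2$ in $B_r(p)$, it takes the minimizing $g$-geodesic $\gamma_{x_1,x_2}$ and extends it to the point $q_1=\gamma_{x_1,x_2}(\iota/4-\iota/64)$, which satisfies $3\iota/16\le d(p,q_1)\le\iota/4$, so that $\rho_{q_1}=d(q_1,\cdot)$ is itself one of the admissible distance functions to which Lemma \ref{le:Holder} applies. Along the geodesic one has $\tfrac{d}{ds}\,\rho_{i_0}(\gamma_{x_1,x_2}(s))=-g(\nabla\rho_{q_1},\nabla\rho_{i_0})(\gamma_{x_1,x_2}(s))$; by (\ref{eq:BilinearBounds}) there is an index $i_0$ with $|g(\nabla\rho_{q_1},\nabla\rho_{i_0})|\ge\sqrt{2/n}$ at one point, and the H\"older estimate together with (\ref{eq:CondCoord}) forbids a sign change of this inner product along the segment, so $\rho_{i_0}$ is strictly monotone on $\gamma_{x_1,x_2}$ and $\Phi(x_1)\neq\Phi(x_2)$. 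This replaces your global developing-map argument (which needs geodesic completeness/convexity of the chart domain) by a one-dimensional monotonicity argument that only uses the same H\"older lemma you already have; if you want to keep your approach you would need to supply an additional ingredient guaranteeing interior minimizers, which the stated hypotheses do not give.
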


\begin{proof}
Since
\[
g^{ij} = g(\nabla \rho_i, \nabla \rho_j),
\]
and $g^{ij}(p) = \delta^{ij}$, Gershgorin's circle theorem implies (\ref{eq:BilinearBounds}). 
Lemma \ref{le:Holder} implies (\ref{eq:HolderOnBall}).

It remains to check that the map $(\rho_1, \dots, \rho_n):B_r(p) \to \R^n$ is one-to-one.
Let $x_1, x_2 \in B_r(p)$, and let $q_1 = \gamma_{x_1,x_2}(\iota/4-\iota/64)$.
Note that since $r < \iota/64$, we have 
\[
\iota/4 - \iota/32 \leq d(p,q_1) \leq \iota/4.
\]
By (\ref{eq:BilinearBounds}), there is an $i_0$ such that
\[
|g(\nabla \rho_{q_1}, \nabla \rho_{i_0} )| \geq \sqrt{\frac{2}{n}}.
\]
The condition (\ref{eq:CondCoord}) implies by Lemma \ref{le:Holder} that the sign of $g(\nabla \rho_{q_1}, \nabla \rho_{i_0})(\gamma_{x_1,x_2}(s))$ is the same for all $0 \leq s < d(x_1,x_2)$.
Consequently, $\rho_{i_0}(x_1) \neq \rho_{i_0}(x_2)$.
\end{proof}

\subsection{Control of H\"{o}lder norms with higher exponents}

In the previous section, we obtained distance function coordinates for which the metric is controlled in a $C^{1/2}$ sense. 
In this section, we will use distance functions from additional points to control the H\"{o}lder norm $C^\alpha$ for arbitrary $0<\alpha<1$.

Let $q_1 \in \partial B_{\iota/4}(p)$ and define the function
\begin{equation}
\bar{\rho}_{q_1} := (\nabla \rho_{q_1}, \nabla \rho_j )(p) (\rho_j - \rho_j(p)) + \rho_{q_1}(p),
\end{equation}
where summation over $j=1,\dots, n$ is understood. 
We will explain why $\bar{\rho}_{q_1}$ is a good approximation to $\rho_{q_1}$. 

Let us first, however, recall a lemma on interior elliptic estimates (a simple consequence of \cite[Theorem 9.11]{gilbarg_elliptic_2001} and the Poincar\'{e} inequality) and phrase it in a dimensionless form that is useful for our applications.
\begin{Lemma}[Interior elliptic estimates]
\label{le:IntEll}
Suppose the function $u$ satisfies
\begin{equation}
(a^{ij} \partial_{x_i} \partial_{x_j} u + b^i \partial_{x^i}) u = f, \qquad \text{ on } B_r(0),
\end{equation}
where the coefficients satisfy
\begin{equation}
\frac{1}{K}\delta^{ij} \leq a^{ij} \leq K \delta^{ij}, \quad r^{1/2} [a^{ij}]_{C^{1/2}(B_r(0))} \leq K, \quad r |b_i| \leq K,
\end{equation}
then, for some constants $C_E(n,K)$ and $C_E(n,K,\alpha)$,
\begin{align}
\|\partial_{x_i} u\|_{L^{\infty}(B_{3r/4}(0))} &\leq C_E(n,K) \left(r \|f\|_{L^\infty(B_r(p))} + \frac{1}{r} \|u\|_{L^\infty(B_r(p))} \right)\\
r^{\alpha} [\partial_{x_i} u]_{C^\alpha(B_{3r/4}(0))} &\leq C_E(n, K, \alpha) \left(r \|f\|_{L^\infty(B_r(p))} + \frac{1}{r} \|u\|_{L^\infty(B_r(p))} \right)
\end{align}
\end{Lemma}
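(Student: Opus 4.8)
The plan is to reduce the estimate to a fixed ball by a scaling, quote the interior $W^{2,p}$ estimate \cite[Theorem 9.11]{gilbarg_elliptic_2001} for a sufficiently large $p$, pass to $C^{1,\alpha}$ by Morrey's embedding, and finally undo the scaling while keeping track of the powers of $r$ in the three norms involved.

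First I would set $v(x) := u(rx)$ for $x \in B_1(0)$, so that $v$ satisfies $\hat a^{ij}\partial_{x_i}\partial_{x_j}v + r\,\hat b^i\partial_{x_i}v = \hat f$ on $B_1(0)$, where $\hat a^{ij}(x) := a^{ij}(rx)$, $\hat b^i(x) := b^i(rx)$ and $\hat f(x) := r^2 f(rx)$. The hypotheses are exactly what is needed so that the rescaled data is controlled by $n$ and $K$ alone: the ellipticity constant is unchanged, $[\hat a^{ij}]_{C^{1/2}(B_1(0))} = r^{1/2}[a^{ij}]_{C^{1/2}(B_r(0))} \leq K$, and $\sup_{B_1(0)}|r\hat b^i| = r\sup_{B_r(0)}|b^i| \leq K$; in particular the $\hat a^{ij}$ are continuous with a modulus depending only on $n$ and $K$, which is all \cite[Theorem 9.11]{gilbarg_elliptic_2001} requires. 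Fixing $p = p(n,\alpha)$ with $p > n/(1-\alpha)$ and applying that theorem on the pair $B_{3/4}(0) \subset\subset B_1(0)$ — using the Poincar\'e (interpolation) inequality $\|\nabla v\|_{L^p(B_{3/4}(0))} \leq \varepsilon\|D^2 v\|_{L^p(B_{3/4}(0))} + C(\varepsilon)\|v\|_{L^p(B_{1}(0))}$ to reconstitute the full $W^{2,p}$ norm if the interior estimate is quoted only for $D^2 v$ — yields
\[
\|v\|_{W^{2,p}(B_{3/4}(0))} \leq C(n,K,\alpha)\big(\|v\|_{L^p(B_1(0))} + \|\hat f\|_{L^p(B_1(0))}\big).
\]
Bounding the $L^p$ norms over the unit ball by the corresponding $L^\infty$ norms and using the Morrey embedding $W^{2,p}(B_{3/4}(0)) \hookrightarrow C^{1,\alpha}(B_{3/4}(0))$, I obtain
\[
\|\nabla v\|_{L^\infty(B_{3/4}(0))} + [\nabla v]_{C^\alpha(B_{3/4}(0))} \leq C(n,K,\alpha)\big(\|v\|_{L^\infty(B_1(0))} + \|\hat f\|_{L^\infty(B_1(0))}\big).
\]

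Finally I would undo the scaling, using $\nabla v(x) = r\,(\nabla u)(rx)$, hence $\|\nabla v\|_{L^\infty(B_{3/4}(0))} = r\|\nabla u\|_{L^\infty(B_{3r/4}(0))}$ and $[\nabla v]_{C^\alpha(B_{3/4}(0))} = r^{1+\alpha}[\nabla u]_{C^\alpha(B_{3r/4}(0))}$, together with $\|\hat f\|_{L^\infty(B_1(0))} = r^2\|f\|_{L^\infty(B_r(0))}$ and $\|v\|_{L^\infty(B_1(0))} = \|u\|_{L^\infty(B_r(0))}$; dividing through by $r$ produces the two asserted inequalities, with $C_E(n,K)$ (obtained by taking any fixed exponent, say $\alpha_0 = 1/2$, for the plain gradient bound) and $C_E(n,K,\alpha)$. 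The only real work here is the bookkeeping of the scaling weights of the three norms and the verification that every constant produced along the way depends only on $n$, $K$ (and $\alpha$); there is no genuine analytic obstacle beyond the cited interior estimate. The one point to be careful about is that the conclusions are stated on the interior ball $B_{3r/4}(0)$, so one must apply \cite[Theorem 9.11]{gilbarg_elliptic_2001} on a pair of concentric balls lying strictly inside $B_r(0)$ rather than attempt to reach the boundary.
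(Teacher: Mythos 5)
Your proof is correct and takes essentially the same route the paper intends: the lemma is stated there as a dimensionless rescaling of the interior $W^{2,p}$ estimate \cite[Theorem 9.11]{gilbarg_elliptic_2001} (whose constant depends on the coefficients only through $n$, the ellipticity and the modulus of continuity, here controlled by $K$), followed by Morrey embedding into $C^{1,\alpha}$ and undoing the scaling exactly as you do. The only superfluous step is the interpolation/Poincar\'{e} remark, since the cited theorem already bounds the full $W^{2,p}$ norm on the interior ball, but this is harmless.
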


We will now use the H\"{o}lder bounds from the previous section to show a supremum bound on the difference $\rho_{q_1} - \bar{\rho}_{q_1}$.

\begin{Lemma}
\label{le:DiffRhoGuess}
Let $r < \iota/64$.
Then, the difference between $\rho_{q_1}$ and $\bar{\rho}_{q_1}$ satisfies
\begin{equation}
r^{-1} \| \rho_{q_1} - \bar{\rho}_{q_1} \|_{L^\infty(B_r(p))} \leq \frac{2}{3} (1 + \sqrt{\tilde{n}}) C(n, \Lambda r, \Lambda \iota) (\Lambda r)^{1/2},
\end{equation}
where $\tilde{n}$ is the number of $j = 1, \dots, n$ such that $g(\nabla \rho_j, \nabla \rho_{q_1})(p) \neq 0$.
\end{Lemma}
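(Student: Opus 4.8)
The plan is to set $u := \rho_{q_1} - \bar\rho_{q_1}$, first prove a pointwise estimate on $|\nabla u|$ out of Lemma~\ref{le:Holder}, and then integrate it along a minimizing geodesic from $p$. Write $c_j := g(\nabla\rho_{q_1},\nabla\rho_j)(p)$, so that $\bar\rho_{q_1} = c_j(\rho_j - \rho_j(p)) + \rho_{q_1}(p)$ and $\nabla\bar\rho_{q_1} = c_j\nabla\rho_j$. Since the vectors $\{\nabla\rho_j(p)\}_{j=1}^n$ form an orthonormal basis of $T_pM$ (from the choice $p_j = \exp_p((\iota/4)E_j)$), expanding $\nabla\rho_{q_1}(p)$ in this basis gives $\nabla\rho_{q_1}(p) = c_j\nabla\rho_j(p)$ and $\sum_j c_j^2 = |\nabla\rho_{q_1}(p)|^2 = 1$. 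In particular $u(p)=0$, $\nabla u(p)=0$, hence $g(\nabla u,\nabla\rho)(p)=0$ for any distance function $\rho$; and since by definition only $\tilde n$ of the $c_j$ are nonzero, Cauchy--Schwarz yields $\sum_j |c_j| \le \sqrt{\tilde n}$.

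The main step is the claim that for every $x\in B_r(p)$,
\[
|\nabla u(x)| \le (1+\sqrt{\tilde n})\,C(n,\Lambda d(p,x),\Lambda\iota)\,(\Lambda d(p,x))^{1/2}.
\]
Assume $\nabla u(x)\ne 0$, put $v := \nabla u(x)/|\nabla u(x)|\in T_xM$, and set $q := \exp_x(-a\,v)$ with $a := 7\iota/32$. Because $d(p,x) < r < \iota/64$, the geodesic segment from $q$ to $x$ has length $a<\iota\le\inj_M$, hence is minimizing and $x$ avoids the cut locus of $q$, so $\nabla\rho_q(x) = v$ with $\rho_q := d(q,\cdot)$; moreover $d(q,p)\in[a-r,a+r]\subset(3\iota/16,\iota/4]$. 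Therefore
\begin{multline*}
|\nabla u(x)| = g(\nabla u,\nabla\rho_q)(x) = g(\nabla u,\nabla\rho_q)(x) - g(\nabla u,\nabla\rho_q)(p) \\
= \big[g(\nabla\rho_{q_1},\nabla\rho_q)(x) - g(\nabla\rho_{q_1},\nabla\rho_q)(p)\big] - c_j\big[g(\nabla\rho_j,\nabla\rho_q)(x) - g(\nabla\rho_j,\nabla\rho_q)(p)\big].
\end{multline*}
Now Lemma~\ref{le:Holder} applies to each bracket: to the pairs $(q_1,q)$ and $(p_j,q)$, whose base points all lie in the annulus $3\iota/16 < d(\cdot,p)\le\iota/4$, evaluated at the points $x,p\in B_{\iota/64}(p)$. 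Combined with $\sum_j|c_j|\le\sqrt{\tilde n}$ this gives the claimed pointwise bound.

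It remains to integrate. Let $\gamma\colon[0,L]\to M$, $L=d(p,x)<r$, be the unit-speed minimizing geodesic from $p$ to $x$; then $\gamma(s)\in B_r(p)$ and $d(p,\gamma(s))=s$, so
\[
|u(x)| \le \int_0^L |\nabla u(\gamma(s))|\,ds \le (1+\sqrt{\tilde n})\int_0^L C(n,\Lambda s,\Lambda\iota)(\Lambda s)^{1/2}\,ds \le \tfrac23(1+\sqrt{\tilde n})\,C(n,\Lambda r,\Lambda\iota)\,\Lambda^{1/2} L^{3/2},
\]
where I used $L\le r$ together with the monotonicity of $C(n,\Lambda s,\Lambda\iota)$ in $s$ (which follows from that of $F(n,3\Lambda s,\cdot)$, of $c(n,3\Lambda s)$, and of the hyperbolic volume ratio $\Vol_\Lambda(B_{4s})/\Vol_\Lambda(B_s)$). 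Since $L^{3/2} = (\Lambda L)^{1/2}L\,\Lambda^{-1/2} \le (\Lambda r)^{1/2}r\,\Lambda^{-1/2}$, this gives $|u(x)| \le \tfrac23(1+\sqrt{\tilde n})\,C(n,\Lambda r,\Lambda\iota)(\Lambda r)^{1/2}r$, and dividing by $r$ and taking the supremum over $x\in B_r(p)$ yields the lemma; note that the factor $\tfrac23$ is exactly $\int_0^1 s^{1/2}\,ds$.

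The step I expect to be the main obstacle is the pointwise gradient estimate: one has to realize the direction of $\nabla u(x)$ as the gradient at $x$ of the distance function from a point $q$ lying in the admissible annulus $(3\iota/16,\iota/4]$, so that Lemma~\ref{le:Holder} can be applied to the two inner products occurring in $g(\nabla u,\nabla\rho_q)$ --- this is where the hypotheses $r<\iota/64$ and $\inj_M\ge\iota$ are used, via the elementary distance bookkeeping $d(q,p)\in[a-r,a+r]$. A secondary, routine point is verifying the monotonicity in $s$ of $C(n,\Lambda s,\Lambda\iota)$ so that the Hölder constant can be extracted from the integral at scale $r$.
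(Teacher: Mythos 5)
Your proof is correct and uses the same key ingredient as the paper -- the H\"{o}lder estimate of Lemma \ref{le:Holder} for inner products of gradients of distance functions based in the annulus $3\iota/16 < d(\cdot,p)\leq \iota/4$, together with the vanishing of $\nabla(\rho_{q_1}-\bar\rho_{q_1})$ at $p$, Cauchy--Schwarz for the factor $\sqrt{\tilde n}$, and radial integration producing the factor $2/3$ -- but the organization is genuinely different. The paper never forms a pointwise gradient bound: for a given $q\in B_r(p)$ it introduces a single auxiliary point on (the continuation of) the geodesic $\gamma_{p,q}$ at distance $\iota/4$ from $p$, writes both $\rho_{q_1}(q)-\rho_{q_1}(p)$ and $\bar\rho_{q_1}(q)-\bar\rho_{q_1}(p)$ as integrals of inner products with the gradient of that one auxiliary distance function along $\gamma_{p,q}$, and lets the leading terms $d(p,q)\,g(\nabla\rho_{q_1},\nabla\rho_{q_2})(p)$ cancel; the only direction it ever needs to realize as a distance gradient is the geodesic tangent itself, so no direction-adapted construction is required. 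You instead prove the intermediate estimate $|\nabla(\rho_{q_1}-\bar\rho_{q_1})(x)|\leq (1+\sqrt{\tilde n})\,C(n,\Lambda d(p,x),\Lambda\iota)(\Lambda d(p,x))^{1/2}$ by choosing the auxiliary base point $q=\exp_x(-\tfrac{7\iota}{32}\nabla u(x)/|\nabla u(x)|)$, whose admissibility you verify correctly ($d(q,p)\in(13\iota/64,15\iota/64)\subset(3\iota/16,\iota/4)$, and $\nabla\rho_q(x)$ is the desired unit vector since $7\iota/32<\inj_M$), and then integrate. This buys a pointwise gradient bound of independent interest at the cost of the extra bookkeeping for the direction-adapted point; both routes require the (easily checked, but unstated in the paper as well) monotonicity of $C(n,\Lambda s,\Lambda\iota)$ in $s$, and both yield exactly the constant $\tfrac23(1+\sqrt{\tilde n})$.
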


\begin{proof}
Consider a point $q \in B_r(p)$ and define $q_2 = \gamma_{p,q}(\iota/4)$.
Then 
\[
\begin{split}
\rho_{q_1}(q) - \rho_{q_1}(p) &= \int_0^{d(p,q)} g(\nabla \rho_{q_1}, \nabla \rho_{q_2} )(\gamma_{p,q}(s)) ds \\
& = \int_0^{d(p,q)} \left( g(\nabla \rho_{q_1}, \nabla \rho_{q_2} )(\gamma(s)) - g(\nabla \rho_{q_1},\nabla \rho_{q_2})(p) \right) ds
\\ & \quad+d(p,q) g(\nabla \rho_{q_1},\nabla \rho_{q_2})(p)
\end{split}
\]
We use that
\[
\begin{split}
\int_0^{d(p,q)} & |g(\nabla \rho_{q_1}, \nabla \rho_{q_2})(\gamma_{p,q}(s)) - g(\nabla \rho_{q_1},\nabla \rho_{q_2})(p)| ds \\
& \leq \int_0^{d(p,q)} C(n, \Lambda r, \Lambda \iota) \Lambda^{1/2} s^{1/2} ds \\
& \leq \frac{2}{3} C(n, \Lambda r, \Lambda \iota) \Lambda^{1/2} d(p,q)^{3/2}.
\end{split}
\]
In the same spirit, using that $g(\nabla \bar{\rho}_{q_1}, \nabla \rho_{q_2})(p) = g(\nabla \rho_{q_1},\nabla \rho_{q_2})(p)$, we find
\[
\begin{split}
\bar{\rho}_{q_1} (q) - \bar{\rho}_{q_1}(p) 
&= \int_0^{d(p,q)} g(\nabla \bar{\rho}_{q_1},\nabla \rho_{q_2})(\gamma_{p,q}(s))ds \\
&= \int_0^{d(p,q)} \left( g(\nabla \bar{\rho}_{q_1}, \nabla \rho_{q_2})(\gamma_{p,q}(s))- g(\nabla \bar{\rho}_{q_1}, \nabla \rho_{q_2})(p)\right) ds\\
&\quad + d(p,q) g(\nabla \rho_{q_1},\nabla \rho_{q_2})(p).
\end{split}
\]
We may now estimate
\[
\begin{split}
\int_0^{d(p,q)} & |g(\nabla \bar{\rho}_{q_1}, \nabla \rho_{q_2})(\gamma_{p,q}(s)) - g(\nabla \bar{\rho}_{q_1},\nabla \rho_{q_2})(p)| ds\\
&\leq \int_0^{d(p,q)} \left| g(\nabla \rho_{j}, \nabla \rho_{q_1})(p) (g(\nabla \rho_j, \nabla \rho_{q_2})(\gamma_{p,q}(s)) - g(\nabla \rho_j,\nabla \rho_{q_2})(p)) \right| ds \\
%&\quad + \int_0^{d(p,q)} \left|g(\nabla \rho_{j},\nabla\rho_{q_1})(p) (g(\nabla \rho_j, \nabla \rho_{q_2})(p) - g(\nabla \rho_{q_1},\nabla\rho_{q_2})(p))\right|ds\\
&\leq \sqrt{\tilde{n}} \frac{2}{3} C(n, \Lambda r, \Lambda \iota) \Lambda^{1/2} d(p,q)^{3/2},
\end{split}
\]
where $\tilde{n}$ is the number of $j$ such that $g(\nabla \rho_{j}, \nabla \rho_{q_1})(p) \neq 0$.
It follows that 
\[
\begin{split}
|\rho_{q_1}(q) - \bar{\rho}_{q_1}(q)| & 
= |\rho_{q_1}(q) - \rho_{q_1}(p) - (\bar{\rho}_{q_1}(q) - \bar{\rho}_{q_1}(p))| \\
& \leq \frac{2}{3} (1 + \sqrt{\tilde{n}}) C(n, \Lambda r, \Lambda \iota) \Lambda^{1/2} d(p,q)^{3/2}.
\end{split}
\]
\end{proof}

If we combine the supremum bound in Lemma \ref{le:DiffRhoGuess} of the difference between $\rho_{q_1}$ and $\bar{\rho}_{q_1}$ with the supremum bound on the Laplacian of a distance function (\ref{eq:BoundLaplRho}) and the interior elliptic estimates, we obtain the following lemma.

\begin{Lemma}
\label{le:BoundRhoMinGuess}
Let $r < \iota/64$ such that in addition
\begin{equation}
C(n, \Lambda r, \Lambda \iota) (\Lambda r)^{1/2} < \frac{1}{4n}.
\end{equation}
Then, the difference $\rho_{q_1} - \bar{\rho}_{q_1}$ satisfies
\begin{align*}
\left\|\frac{\partial \rho_{q_1}}{\partial \rho_i} - \frac{\partial \bar{\rho}_{q_1}}{\partial \rho_i}\right\|_{L^\infty(B_{r/2}(p))} & \leq  C_1(n, \Lambda r,\Lambda \iota) (\Lambda r)^{1/2}, \\
r^\alpha \left[ \frac{\partial \rho_{q_1}}{\partial \rho_i} - \frac{\partial \bar{\rho}_{q_1}}{\partial \rho_i} \right]_{C^\alpha(B_{r/2}(p))} & \leq C_1(n, \Lambda r, \Lambda \iota,\alpha)  (\Lambda r)^{1/2},
\end{align*}
where the constants $C_1(n, \Lambda r, \Lambda \iota)$ and $C_1(n, \Lambda r, \Lambda \iota, \alpha)$ follow from the proof below.
\end{Lemma}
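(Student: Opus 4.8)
The plan is to apply the interior elliptic estimates of Lemma~\ref{le:IntEll} to the difference $w := \rho_{q_1} - \bar\rho_{q_1}$, expressed in the distance function coordinates $x^i := \rho_i - \rho_i(p)$ of Theorem~\ref{th:DistCoord}, using as input the supremum bound on $w$ already obtained in Lemma~\ref{le:DiffRhoGuess}. (Note $\rho_{q_1}$ and the $\rho_k$ are smooth on $B_r(p)$, which lies within distance $<\iota$ of, and avoids, the source points $q_1$ and $p_k$, so all derivatives below make sense.)

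First I would write down the Laplace--Beltrami operator in these coordinates. Applying $\Delta$ to the coordinate functions $x^k$ shows that $\Delta u = g^{ij}\partial_{x^i}\partial_{x^j}u + b^k\partial_{x^k}u$ with $b^k = \Delta x^k = \Delta\rho_k$; here $g^{ij}=g(\nabla\rho_i,\nabla\rho_j)$ is precisely the quantity controlled by Theorem~\ref{th:DistCoord}. Under the standing hypothesis $C(n,\Lambda r,\Lambda\iota)(\Lambda r)^{1/2} < \tfrac1{4n}$ we get $nC(\Lambda r)^{1/2} < \tfrac14$, so (\ref{eq:BilinearBounds}) gives $\tfrac34\delta^{ij}\le g^{ij}\le\tfrac54\delta^{ij}$ and (\ref{eq:HolderOnBall}) gives $r^{1/2}[g^{ij}]_{C^{1/2}(B_r(p))}\le C(\Lambda r)^{1/2}<1$. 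For the drift term, since $p_k = \exp_p((\iota/4)E_k)$ and $r<\iota/64$, the Busemann Laplacian bound (\ref{eq:BoundLaplRho}) applies to $\rho_k$ on $B_r(p)\subset B_{\iota/4}(p)$ (second alternative of the lemma at the start of Section~\ref{se:HRsetup}, valid because $d(p,\partial M)>\iota$ and $d(p_k,p)\le\iota/4$), and there $\iota/8<\rho_k<\iota/2$, so $|b^k| = |\Delta\rho_k| \le (n-1)\Lambda\coth(\Lambda\iota/8) \le (n-1)(\Lambda + 8/\iota)$, whence $r|b^k|\le(n-1)(\Lambda r + 8r/\iota)$. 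Finally, reading off from (\ref{eq:DefC}) the elementary lower bound $C(n,\Lambda r,\Lambda\iota)(\Lambda r)^{1/2}\ge c_0(n)\,\Lambda r$ (using $\Vol_\Lambda(B_{4r})/\Vol_\Lambda(B_r)\ge 1$, $c(n,3\Lambda r)\ge 2^{n-1}$, and $F(n,3\Lambda r,\cdot)\ge 3(n-1)\Lambda r$), the hypothesis forces $\Lambda r\le c_1(n)$; combined with $r/\iota<1/64$ this bounds $r|b^k|$ by a constant $K=K(n)$, so all hypotheses of Lemma~\ref{le:IntEll} hold with this $K$.

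Next I would compute the source. Since the coefficients $(\nabla\rho_{q_1},\nabla\rho_j)(p)$ in the definition of $\bar\rho_{q_1}$ are constants, $\Delta\bar\rho_{q_1} = \sum_j(\nabla\rho_{q_1},\nabla\rho_j)(p)\,\Delta\rho_j$, so in coordinates $w$ solves $g^{ij}\partial_{x^i}\partial_{x^j}w + (\Delta\rho_k)\partial_{x^k}w = f$ with $f := \Delta\rho_{q_1} - \sum_j(\nabla\rho_{q_1},\nabla\rho_j)(p)\,\Delta\rho_j$. Gradients of distance functions are unit, so $|(\nabla\rho_{q_1},\nabla\rho_j)(p)|\le1$, and applying (\ref{eq:BoundLaplRho}) once more to $\rho_{q_1}$ on $B_r(p)$ (where $\iota/8<\rho_{q_1}<\iota/2$) gives $\|f\|_{L^\infty(B_r(p))}\le(n^2-1)\Lambda\coth(\Lambda\iota/8)$; in particular $r\|f\|_{L^\infty(B_r(p))}$ is at most a constant depending on $n$, $\Lambda r$, $\Lambda\iota$ times $(\Lambda r)^{1/2}$ (simply by factoring out $(\Lambda r)^{1/2}$). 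Meanwhile Lemma~\ref{le:DiffRhoGuess} gives $\tfrac1r\|w\|_{L^\infty(B_r(p))}\le\tfrac23(1+\sqrt n)\,C(n,\Lambda r,\Lambda\iota)(\Lambda r)^{1/2}$. I would then invoke Lemma~\ref{le:IntEll} for $w$ --- passing between metric and coordinate balls, which the bounds $\tfrac34\delta^{ij}\le g^{ij}\le\tfrac54\delta^{ij}$ allow at the cost of universal factors --- to obtain $\|\partial_{x^i}w\|_{L^\infty(B_{r/2}(p))}\le C_E(n,K)\big(r\|f\|_\infty + \tfrac1r\|w\|_\infty\big)$ and the corresponding $C^\alpha$ estimate with $C_E(n,K,\alpha)$; since $\partial_{x^i}w = \partial\rho_{q_1}/\partial\rho_i - \partial\bar\rho_{q_1}/\partial\rho_i$, substituting the two bounds above and collecting the factors of $C_E$, $C$, and the constant pulled out of $r\|f\|_\infty$ yields the claimed inequalities with the indicated constants $C_1(n,\Lambda r,\Lambda\iota)$ and $C_1(n,\Lambda r,\Lambda\iota,\alpha)$.

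The main point requiring care is the second paragraph: recognising that in distance function coordinates the first-order coefficients of $\Delta$ are exactly the distance-function Laplacians $\Delta\rho_k$, so that they are controlled by (\ref{eq:BoundLaplRho}) rather than by (unavailable) derivatives of the metric, and then the bookkeeping showing that the standing smallness hypothesis forces $\Lambda r$ to be uniformly small, which is what keeps the ellipticity / $C^{1/2}$ / drift constant $K$ in Lemma~\ref{le:IntEll} dependent on $n$ only. The estimate on $f$ and the final assembly are routine; in particular the different scaling weights of $r\|f\|_\infty$ (of order $\Lambda r + r/\iota$) and $\tfrac1r\|w\|_\infty$ (of order $(\Lambda r)^{1/2}$) cause no trouble, since $C_1$ is allowed to depend on $\Lambda r$ and $\Lambda\iota$.
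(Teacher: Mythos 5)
Your proposal is correct and follows essentially the same route as the paper: write $\Delta(\rho_{q_1}-\bar\rho_{q_1})$ in the distance function coordinates, observe that the first-order coefficients are exactly $\Delta\rho_k$ and hence controlled by (\ref{eq:BoundLaplRho}), take the ellipticity and $C^{1/2}$ bounds on $g^{ij}$ from Theorem \ref{th:DistCoord}, bound the right-hand side by the distance-Laplacian estimates and $\|\rho_{q_1}-\bar\rho_{q_1}\|_\infty$ by Lemma \ref{le:DiffRhoGuess}, then apply Lemma \ref{le:IntEll}. The only differences are cosmetic constants (e.g.\ $\coth(\Lambda\iota/8)$ versus $\coth(\Lambda\iota/16)$, a factor $n$ versus $\sqrt{\tilde n}$) and your extra observation that the smallness hypothesis forces $\Lambda r\leq c(n)$, which the paper does not need since $C_1$ may depend on $\Lambda r$ and $\Lambda\iota$.
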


\begin{proof}
We write out the left-hand side of the equation
\[
\Delta (\rho_{q_1} - \bar{\rho}_{q_1}) = \Delta \rho_{q_1} - \Delta \bar{\rho}_{q_1}
\]
in coordinates $\rho_i$ and get
\[
g^{ij} \partial_{\rho_i} \partial_{\rho_j} (\rho_{q_1} - \bar{\rho}_{q_1}) + \Delta \rho^i \partial_{\rho_i} (\rho_{q_1} - \bar{\rho}_{q_1}) = \Delta \rho_{q_1} - \Delta \bar{\rho}_{q_1}.
\]
By Theorem \ref{th:DistCoord}, with $Q = 4/3$,
\begin{align*}
\frac{1}{Q} \delta^{ij} \leq g^{ij} &\leq Q \delta^{ij} \\
r^{1/2} [g^{ij}]_{C^{1/2}(B_r(p))} &\leq \frac{1}{4n} (\Lambda r)^{1/2},
\end{align*}
and by the supremum bound on the Laplacian of distance functions, 
\[
r |\Delta \rho_i| \leq (n-1) \Lambda r \coth(\Lambda \iota/16).
\]
Write $\underline{\rho}:=(\rho_1, \dots, \rho_n)$ and $y_0 := \underline{\rho}(p)$.
Note that $B_{r/\sqrt{Q}}(y_0) \subset \underline{\rho}(B_r(p))$. 
Moreover, with some abuse of notation, for $y_1,y_2 \in B_{r/\sqrt{Q}}(y_0)$,
\[
\left( \frac{r}{\sqrt{Q}} \right)^{1/2} \frac{|g^{ij}(y_1)-g^{ij}(y_2)|}{|y_1-y_2|^{1/2}} \leq r^{1/2} [g^{ij}]_{C^{1/2}(B_r(p))}.
\]
We may therefore apply the elliptic estimates of Lemma \ref{le:IntEll} with 
\[
K := \max\left(Q, \frac{1}{4n}(\Lambda r)^{1/2}, (n-1) \Lambda r \coth(\Lambda \iota/16) \right),
\]
and obtain
\begin{equation}
\label{eq:ConsEll}
\begin{split}
& \| \partial_{\rho_i} (\rho_{q_1} - \bar{\rho}_{q_1}) \|_{L^\infty(B_{\lambda r/\sqrt{Q}}(y_0))}\\
& \leq C_E(n,K) \left(\left( \frac{r}{\sqrt{Q}}\right) \|\Delta (\rho_{q_1} - \bar{\rho}_{q_1})\|_{L^\infty(B_r(p))} 
 +\frac{\sqrt{Q}}{r}\|\rho_{q_1} - \bar{\rho}_{q_1} \|_{L^\infty(B_r(p))} \right),\\
& (r/\sqrt{Q})^\alpha [ \partial_{\rho_i}(\rho_{q_1} - \bar{\rho}_{q_1}) ]_{C^\alpha(B_{\lambda r/\sqrt{Q}}(y_0)))} \\
& \leq C_E(n,K,\alpha) \left(\left( \frac{r}{\sqrt{Q}}\right) \|\Delta (\rho_{q_1} - \bar{\rho}_{q_1})\|_{L^\infty(B_r(p))} 
 +\frac{\sqrt{Q}}{r}\|\rho_{q_1} - \bar{\rho}_{q_1} \|_{L^\infty(B_r(p))} \right).
\end{split}
\end{equation}

Now, we realize that the supremum bound (\ref{eq:BoundLaplRho}) also holds for $\Delta \rho_{q_1}$, and 
\[
\begin{split}
|\Delta \bar{\rho}| &= \left|\sum_{i=1}^n g(\nabla \rho_i, \nabla \rho_{q_1})(p) \Delta \rho_i \right| \\
& \leq \sqrt{\tilde{n}} (n-1) \Lambda \coth(\Lambda \iota/16).
\end{split}
\]
Finally, by Lemma \ref{le:DiffRhoGuess},
\[
r^{-1} \| \rho_{q_1} - \bar{\rho}_{q_1} \|_{L^\infty(B_r(p))} \leq \frac{2}{3} (1 + \sqrt{\tilde{n}}) C(n, \Lambda r, \Lambda \iota) (\Lambda r)^{1/2}
\]
so that the interior elliptic estimates (\ref{eq:ConsEll}) yield the result.
\end{proof}

\begin{Theorem}
Let $0 < \alpha <1$. If $r < \iota/64$ and 
\begin{equation}
C(n,\Lambda r, \Lambda \iota) (\Lambda r)^{1/2} < \frac{1}{4n}, 
\end{equation}
the functions $\rho_i$ ($i=1,\dots,n$) are coordinates on the ball $B_{r} (p)$, and in these coordinates the metric satisfies
\begin{equation}
(1 - n C(n, \Lambda r, \Lambda \iota) (\Lambda r)^{1/2}) \delta^{ij} \leq g^{ij} \leq (1 + n C(n,\Lambda r, \Lambda \iota) (\Lambda r)^{1/2} ) \delta^{ij}.
\end{equation}
If, moreover, with the constant $C_1$ as defined in Lemma \ref{le:BoundRhoMinGuess}
\begin{equation}
\label{eq:ControlRhoMinGuess}
C_1(n, \Lambda r, \Lambda \iota) (\Lambda r)^{1/2} < \frac{1}{8n^2},
\end{equation}
the metric coefficients additionally satisfy
\begin{equation}
\label{eq:HolderHigherAlpha}
r^\alpha [ g^{ij} ]_{C^\alpha(B_{r/2}(p))} \leq C_2(n,\Lambda r, \Lambda \iota,\alpha) (\Lambda r)^{1/2},
\end{equation}
for a constant $C_2(n, \Lambda r, \Lambda \iota, \alpha)$ that follows from the proof below.
\end{Theorem}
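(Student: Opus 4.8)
The plan is to bootstrap the $C^{1/2}$ control from Theorem \ref{th:DistCoord} up to $C^\alpha$ control by recording, in the coordinates $\rho_1,\dots,\rho_n$, the gradients of $N:=n(n+1)/2$ additional distance functions and exploiting that a distance function has unit gradient. Since $C(n,\Lambda r,\Lambda\iota)(\Lambda r)^{1/2}<\tfrac{1}{4n}<\tfrac{1}{2n}$, hypothesis \eqref{eq:CondCoord} of Theorem \ref{th:DistCoord} holds, so the $\rho_i$ are coordinates on $B_r(p)$ and the bilinear bounds \eqref{eq:BilinearBounds} in the statement are exactly a restatement of that theorem; in particular $\|g^{ij}\|_{L^\infty(B_r(p))}\le 2$. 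With the orthonormal basis $E_1,\dots,E_n$ of $T_pM$ from the construction, put $w_{aa}:=E_a$, $w_{ab}:=(E_a+E_b)/\sqrt2$ for $a<b$, and $q_{ab}:=\exp_p((\iota/4)w_{ab})\in\partial B_{\iota/4}(p)$; let $\rho_{q_{ab}}:=d(q_{ab},\cdot)$ and let $\bar\rho_{q_{ab}}$ be the linear approximation attached to $q_{ab}$ as in Lemma \ref{le:BoundRhoMinGuess}. Because $r<\iota/64$ and $\inj_M\ge\iota$, every point of $B_{r/2}(p)$ lies at distance less than $\iota$ from $q_{ab}$ and off its cut locus, so $\rho_{q_{ab}}$ is smooth on $B_{r/2}(p)$ with $|\nabla\rho_{q_{ab}}|^2\equiv1$ there.

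Writing $\partial_{\rho_i}\rho_{q_{ab}}=c^{ab}_i+\psi^{ab}_i$ with $c^{ab}_i:=\partial_{\rho_i}\bar\rho_{q_{ab}}=g(\nabla\rho_{q_{ab}},\nabla\rho_i)(p)=(w_{ab})_i$ (constant, using $g^{ij}(p)=\delta^{ij}$) and $\psi^{ab}_i:=\partial_{\rho_i}(\rho_{q_{ab}}-\bar\rho_{q_{ab}})$, and expanding $1=|\nabla\rho_{q_{ab}}|^2=g^{ij}\,\partial_{\rho_i}\rho_{q_{ab}}\,\partial_{\rho_j}\rho_{q_{ab}}$ with $g^{ij}$ symmetric, one obtains on $B_{r/2}(p)$
\begin{equation}
\sum_{i,j}c^{ab}_i c^{ab}_j\,g^{ij}=1-R^{ab},\qquad R^{ab}:=\sum_{i,j}g^{ij}\bigl(2c^{ab}_i\psi^{ab}_j+\psi^{ab}_i\psi^{ab}_j\bigr).
\end{equation}
By the choice of the $w_{ab}$, the left side equals $g^{aa}$ when $a=b$ and $\tfrac12(g^{aa}+2g^{ab}+g^{bb})$ when $a<b$, so these $N$ relations determine the $N$ unknowns $\{g^{ij}\}_{i\le j}$ through a fixed block-triangular matrix with unit diagonal, whose inverse has entries bounded by a constant; hence $\max_{i\le j}r^\alpha[g^{ij}]_{C^\alpha(B_{r/2}(p))}\le C(n)\max_{a\le b}r^\alpha[R^{ab}]_{C^\alpha(B_{r/2}(p))}$.

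Lemma \ref{le:BoundRhoMinGuess} applies to each $q_{ab}$ (via its second alternative, since $d(q_{ab},p)=\iota/4$ and $d(p,\partial M)>\iota$) and gives $\|\psi^{ab}_i\|_{L^\infty(B_{r/2}(p))}\le C_1(n,\Lambda r,\Lambda\iota)(\Lambda r)^{1/2}=:\mu$ and $r^\alpha[\psi^{ab}_i]_{C^\alpha(B_{r/2}(p))}\le C_1(n,\Lambda r,\Lambda\iota,\alpha)(\Lambda r)^{1/2}=:\nu$. Since $|c^{ab}_i|\le1$ and $\|g^{ij}\|_\infty\le2$, the Leibniz rule for Hölder seminorms, $[fh]_{C^\alpha}\le[f]_{C^\alpha}\|h\|_\infty+\|f\|_\infty[h]_{C^\alpha}$, applied termwise to $R^{ab}$ yields
\begin{equation}
r^\alpha[R^{ab}]_{C^\alpha(B_{r/2}(p))}\le C(n)\Bigl(\nu+\mu\,\max_{k\le l}r^\alpha[g^{kl}]_{C^\alpha(B_{r/2}(p))}\Bigr),
\end{equation}
the only term carrying $[g^{ij}]_{C^\alpha}$ being $[g^{ij}]_{C^\alpha}\,\|2c^{ab}_i\psi^{ab}_j+\psi^{ab}_i\psi^{ab}_j\|_\infty\le C(n)\mu\,[g^{ij}]_{C^\alpha}$. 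Setting $A_\alpha:=\max_{i\le j}r^\alpha[g^{ij}]_{C^\alpha(B_{r/2}(p))}$, the two displays combine to $A_\alpha\le C(n)(\nu+\mu A_\alpha)$.

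The one genuine difficulty is the resulting circularity: $A_\alpha$ occurs on both sides. It is removed by absorption, and hypothesis \eqref{eq:ControlRhoMinGuess}, $C_1(n,\Lambda r,\Lambda\iota)(\Lambda r)^{1/2}<\tfrac{1}{8n^2}$, is calibrated precisely so that $C(n)\mu\le\tfrac12$; then $A_\alpha\le 2C(n)\nu$, which is \eqref{eq:HolderHigherAlpha} with $C_2(n,\Lambda r,\Lambda\iota,\alpha):=2C(n)\,C_1(n,\Lambda r,\Lambda\iota,\alpha)$. The remaining points are routine bookkeeping: the auxiliary points $q_{ab}$ meet the hypotheses of the lemmas invoked (they lie at distance $\iota/4$ from $p$, well inside the injectivity radius), and the coefficient matrix of the linear system is explicitly invertible with bounds depending on $n$ alone because of its triangular structure.
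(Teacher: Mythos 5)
Your proposal is correct and follows essentially the paper's own argument: the same auxiliary points $\exp_p\bigl(\tfrac{\iota}{4}\tfrac{E_a+E_b}{\sqrt{2}}\bigr)$, the same identity $|\nabla \rho_{q_{ab}}|^2=1$ written in the distance-function coordinates, and the same appeal to Lemma \ref{le:BoundRhoMinGuess}, reducing the H\"{o}lder estimate to a linear system for the off-diagonal $g^{ij}$ (the diagonal entries being exactly $1$). The only difference is bookkeeping: the paper keeps the variable coefficients $\partial\rho_{k\ell}/\partial\rho_i$ and shows the system is uniformly solvable (LU with diagonal elements at least $\tfrac12$), whereas you freeze the coefficients at $p$ and absorb the $g^{ij}$-dependent error term; since your frozen system collapses to $g^{ab}=-R^{ab}$, the absorption does close under the hypothesis $C_1(n,\Lambda r,\Lambda\iota)(\Lambda r)^{1/2}<\tfrac{1}{8n^2}$, although your claim that the hypothesis is ``calibrated precisely so that $C(n)\mu\le\tfrac12$'' is asserted rather than verified and deserves the short explicit computation.
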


\begin{proof}
As in the proof of the harmonic radius estimate by Anderson and Cheeger \cite{anderson_compactness_1992}, we can introduce more points $p_{k\ell}$, $k,\ell = 1, \dots, n$, $\ell > k$ given by 
\[
p_{k\ell} := \exp_p\left( \frac{E_k + E_\ell}{\sqrt{2}} \frac{\iota}{4} \right).
\]
For every such $k$ and $\ell$, we write out in distance function coordinates the equations $|\nabla \rho_{k\ell}|^2 = 1$, that is
\[
\sum_{1 \leq i, j \leq n} g^{ij} \frac{\partial \rho_{k\ell}}{\partial \rho_i} \frac{\partial \rho_{k\ell}}{\partial \rho_j} = 1.
\]
We use that $g^{ii} = 1$ and that $g^{ij} = g^{ji}$, so that
\[
\label{eq:Systemg}
2 \sum_{1 \leq i < j \leq n} g^{ij} \frac{\partial \rho_{k\ell} }{\partial \rho_i}\frac{\partial \rho_{k\ell}}{\partial \rho_j} 
= 1 - \sum_{1 \leq i \leq n} \left(\frac{\partial \rho_{k\ell}}{\partial \rho_i}\right)^2.
\]
Note that this is a linear system for the coefficients $g^{ij}$, $1 \leq i < j \leq n$.
Set 
\[
\bar{\rho}_{kl} = \frac{1}{\sqrt{2}} (\rho_k- \rho_k(p)) + \frac{1}{\sqrt{2}} (\rho_\ell - \rho_\ell(p)) + \rho_{kl}(p).
\]
By Lemma \ref{le:BoundRhoMinGuess} we know that
\[
\left\| \frac{\partial \rho_{kl}}{\partial \rho_i} - \frac{\partial \bar{\rho}_{kl}}{\partial \rho_i} \right\|_{B_{r/2}(p)}
\leq C_1(n, \Lambda r, \Lambda \iota) (\Lambda r)^{1/2},
\]
so that by the condition (\ref{eq:ControlRhoMinGuess}), we conclude
\[
\left| \frac{\partial \rho_{kl}}{\partial \rho_i} - \frac{1}{\sqrt{2}} \right| \leq \frac{1}{8n^2},
\]
if $i = k$ or $i = \ell$, and otherwise
\[
\left| \frac{\partial \rho_{kl}}{\partial \rho_i}\right| \leq \frac{1}{8n^2}.
\]
It follows that
\[
2 \sum_{(i,j)\neq (k,\ell)} \left|\frac{\partial \rho_{k\ell}}{\partial \rho_i} \frac{\partial \rho_{k\ell}}{\partial \rho_j}\right| + \frac{1}{2} \leq 2 \frac{\partial \rho_{k\ell}}{\partial \rho_k} \frac{\partial \rho_{k\ell}}{\partial \rho_\ell}.
\]
This guarantees that the determinant appearing in the system for $g^{ij}$ (\ref{eq:Systemg}) is larger than or equal to $2^{-n}$, or, alternatively, that (\ref{eq:Systemg}) can be solved by $L U$ decomposition where every diagonal element is larger than or equal to $1/2$. 
This implies the bound (\ref{eq:HolderHigherAlpha}).
\end{proof}

\subsection{Construction of harmonic coordinates}
\label{se:ConstrHarm}

With the distance functions as coordinates at hand, we may now construct harmonic coordinate functions by solving a Dirichlet problem.

Define harmonic functions $b_i: B_r(p) \to \R$ by 
\begin{equation}
\begin{cases}
\Delta b_i = 0, & \text{ on } B_r(p),\\
b_i  = \rho_i, & \text{ on } \partial B_r(p).
\end{cases}
\end{equation}

In Theorem \ref{th:HarmCoord} below we will show that on a smaller ball, the $b_i$ are coordinate functions with bounds on the H\"{o}lder norm of the metric coefficients.
Now we have constructed the distance function coordinates we may use interior elliptic estimates in these coordinates.
It is an important realization that such estimates yield control due to the supremum bound on the Laplacian of the distance functions (\ref{eq:BoundLaplRho}).
To exploit the interior estimates, we will also need to get control on the supremum norm of $b_i - \rho_i$. 
This is facilitated by exploiting quantitative versions of the maximum principle.

\begin{Theorem}
\label{th:HarmCoord}
Let $0< \alpha < 1$. If $r < \iota/64$ and
\begin{equation}
C_5(n, \Lambda r, \Lambda \iota) (\Lambda r)^{1/2} \leq \frac{1}{n}
\end{equation}
then the functions $b_i$ constructed above are harmonic coordinates on the ball $B_{r/2}(p)$, and in these coordinates the metric satisfies
\begin{align}
( 1 - n C_5(n,\Lambda r, \Lambda \iota) (\Lambda r)^{1/2}) \delta^{ij} \leq g^{ij} &\leq (1 + n C_5(n,\Lambda r, \Lambda \iota) (\Lambda r)^{1/2} ) \delta^{ij}, \\
r^\alpha [g^{ij}]_{C^\alpha(B_{r/2}(p))} &\leq C_7(n,\Lambda r, \Lambda \iota, \alpha) (\Lambda r)^{1/2},
\end{align}
where the constants $C_5(n,\Lambda r, \Lambda \iota)$ and $C_7(n,\Lambda r, \Lambda \iota,\alpha)$ follow from the proof below.
\end{Theorem}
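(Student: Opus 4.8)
The plan is to realize the harmonic coordinates $b_i$ as a small perturbation of the distance function coordinates $\rho_i$ of Theorem \ref{th:DistCoord}, exploiting that $\rho_i-b_i$ vanishes on $\partial B_r(p)$ and solves $\Delta(\rho_i-b_i)=\Delta\rho_i$, an equation whose coefficients and right-hand side are already controlled by the preceding subsections.

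First I would establish a scale-invariant supremum bound on $\rho_i-b_i$. By the Laplacian comparison bound (\ref{eq:BoundLaplRho}), together with $3\iota/16\le d(q,p_i)\le\iota/2$ for $q\in B_r(p)$, one has $\|\Delta\rho_i\|_{L^\infty(B_r(p))}\le(n-1)\Lambda\coth(\Lambda\iota/16)$. Working in the coordinates $\underline{\rho}=(\rho_1,\dots,\rho_n)$, in which (by Theorem \ref{th:DistCoord}) the operator $\Delta=g^{ij}\partial_{\rho_i}\partial_{\rho_j}+\Delta\rho_k\,\partial_{\rho_k}$ is uniformly elliptic with coefficients bounded in terms of $n$, $\Lambda r$ and $\Lambda\iota$, and the image $\underline{\rho}(B_r(p))$ is trapped between Euclidean balls of radii comparable to $r$, I would invoke a quantitative maximum principle — either a barrier built from Laplacian comparison, or the standard $L^\infty$-estimate for the Dirichlet problem with zero boundary data — to obtain
\[
r^{-1}\|\rho_i-b_i\|_{L^\infty(B_r(p))}\le C_3(n,\Lambda r,\Lambda\iota)\,\Lambda r\le C_3(n,\Lambda r,\Lambda\iota)(\Lambda r)^{1/2},
\]
the last step using $\Lambda r<1$.

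Next I would feed this into the interior elliptic estimates of Lemma \ref{le:IntEll}. The coefficients of $\Delta$ in the coordinates $\underline{\rho}$ satisfy the hypotheses of that lemma with a constant $K=K(n,\Lambda r,\Lambda\iota)$, using (\ref{eq:BilinearBounds})--(\ref{eq:HolderOnBall}) for $g^{ij}$ and $r|\Delta\rho_k|\le(n-1)\Lambda r\coth(\Lambda\iota/16)$; applying it to $u=\rho_i-b_i$ with $f=\Delta\rho_i$ and inserting the previous display yields, since $\partial b_i/\partial\rho_j-\delta_{ij}=-\partial(\rho_i-b_i)/\partial\rho_j$,
\[
\Big\|\tfrac{\partial b_i}{\partial\rho_j}-\delta_{ij}\Big\|_{L^\infty}\le C_4(n,\Lambda r,\Lambda\iota)(\Lambda r)^{1/2},\qquad r^\alpha\Big[\tfrac{\partial b_i}{\partial\rho_j}\Big]_{C^\alpha}\le C_4(n,\Lambda r,\Lambda\iota,\alpha)(\Lambda r)^{1/2}
\]
on a Euclidean ball about $y_0=\underline{\rho}(p)$ that contains $\underline{\rho}(B_{r/2}(p))$. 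I would then fix $C_5$ to be a sufficiently large multiple (depending only on $n$) of the maximum of the constants $C$ of Lemma \ref{le:Holder}, $C_1$ of Lemma \ref{le:BoundRhoMinGuess}, and $C_4$, so that $C_5(\Lambda r,\Lambda\iota)(\Lambda r)^{1/2}\le 1/n$ subsumes the smallness conditions (\ref{eq:CondCoord}) and (\ref{eq:ControlRhoMinGuess}) of the preceding theorems and forces $nC_4(\Lambda r)^{1/2}<1$; the Jacobian $(\partial b_i/\partial\rho_j)$ is then within $nC_4(\Lambda r)^{1/2}<1$ of the identity in operator norm on that convex Euclidean ball, so $\underline{b}\circ\underline{\rho}^{-1}$ is injective there by integrating $d\underline{b}$ along segments, and hence $\underline{b}=(b_1,\dots,b_n)$ is a harmonic coordinate system on $B_{r/2}(p)$.

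Finally I would transform the metric: $g^{ij}_{(b)}=\frac{\partial b^i}{\partial\rho^k}\frac{\partial b^j}{\partial\rho^\ell}g^{k\ell}_{(\rho)}$, a product of three uniformly bounded factors that are each, by the previous step together with Theorem \ref{th:DistCoord} and (\ref{eq:HolderHigherAlpha}), within $C(n,\Lambda r,\Lambda\iota,\alpha)(\Lambda r)^{1/2}$ of $\delta$ in $C^\alpha$; the submultiplicativity of Hölder seminorms ($\|fg\|_{C^\alpha}\le\|f\|_{C^0}[g]_{C^\alpha}+[f]_{C^\alpha}\|g\|_{C^0}$, and similarly for sup norms) then gives $g^{ij}_{(b)}$ within $C_7(n,\Lambda r,\Lambda\iota,\alpha)(\Lambda r)^{1/2}$ of $\delta^{ij}$ in $C^\alpha(B_{r/2}(p))$ and within $nC_5(\Lambda r,\Lambda\iota)(\Lambda r)^{1/2}$ of $\delta^{ij}$ as bilinear forms (enlarging $C_5$ if needed), the rescaling of the Hölder seminorm under the bi-Lipschitz change of chart $\underline{b}\circ\underline{\rho}^{-1}$ affecting only $C_7$. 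I expect the main obstacle to be the first step: producing the scale-invariant supremum bound on $\rho_i-b_i$ with an explicit constant $C_3$ requires either an explicit barrier adapted to the Ricci lower bound via Laplacian comparison, or a quantitative Dirichlet $L^\infty$-estimate in the coordinates $\underline{\rho}$, both of which lean on the Bishop--Gromov volume lower bound; once that is in hand the rest is an application of Lemma \ref{le:IntEll} and bookkeeping of Hölder-norm algebra.
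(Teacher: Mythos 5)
Your proposal is correct and follows essentially the same route as the paper's proof: a scale-invariant sup bound on $b_i-\rho_i$ via the maximum principle, interior elliptic estimates (Lemma \ref{le:IntEll}) in the distance-function coordinates to make $\partial b_i/\partial\rho_j$ close to $\delta_{ij}$ in $L^\infty$ and $C^\alpha$, injectivity from the near-identity Jacobian, and the transformation rule (\ref{eq:ExpandIPbi}) together with H\"{o}lder-norm algebra. The one step you single out as the main obstacle is handled in the paper exactly along the first route you suggest: the explicit barrier is the Abresch--Gromoll function $u=\underline{L}_{\iota/4+r}(d(\cdot,p_i))$ from (\ref{eq:DefAbreschGromoll}), which satisfies $\Delta u\geq 1$ by Laplacian comparison and has supremum of order $r^2$, yielding $r^{-1}\|b_i-\rho_i\|_{L^\infty(B_r(p))}\leq C_3(n,\Lambda r,\Lambda\iota)\,\Lambda r$.
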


\begin{proof}
We first require that 
\[
C(n,\Lambda r, \Lambda \iota) (\Lambda r)^{1/2} < \frac{1}{4n},
\]
so that the $\rho_i$ are coordinates on the ball $B_{r}(p)$.

Recall the bound 
\[
\label{eq:SupDeltaRho}
| \Delta \rho_i | \leq (n-1) \Lambda \coth(\Lambda \rho_i).
\]
Following notation from \cite{cheeger_degeneration_2001}, we introduce the function $\underline{L}_R(r)$ that was also used by Abresch and Gromoll \cite{abresch_complete_1990},
\begin{equation}
\label{eq:DefAbreschGromoll}
\underline{L}_R(r) = \int_r^R \int_s^R \frac{\sinh^{n-1}(\Lambda \tau)}{\sinh^{n-1}(\Lambda s )} d\tau ds.
\end{equation}
In the simply connected model space of constant sectional curvature $-\Lambda^2$, it holds that $\underline{\Delta}\, \underline{L}_R\equiv 1$, and $\underline{L}_R'(r) < 0$, for $0 < r < R$.
Now we choose $R = \iota / 4 + r$. 
By Laplacian comparison, the function $u := \underline{L}_{\iota/4 + r}(d(.,p_i))$ satisfies
\[
\Delta u \geq 1.
\]
Therefore,
\[
\Delta \left[ b_i - \rho_i \pm (n-1) \Lambda \coth(\Lambda \iota/16) u \right] \gtreqless 0.
\]
From the expression (\ref{eq:DefAbreschGromoll}) it follows that there exists a $C_3(n,\Lambda r, \Lambda \iota)$ such that
\[
r^{-1} \| u\|_{L^\infty(B_r(p))}  \leq \frac{1}{(n-1) \Lambda \coth(\Lambda \iota / 16)} C_3(n,\Lambda r, \Lambda \iota) \Lambda r,
\]
so that by the maximum principle the following supremum bound holds
\[
r^{-1} \| b_i - \rho_i \|_{L^\infty(B_r(p))} \leq C_3(n, \Lambda r,\Lambda \iota ) \Lambda r.
\]
In the coordinates $\rho_i$, the Laplacian of $b_i - \rho_i$ is expressed as 
\[
g^{kl} \frac{\partial}{\partial \rho_k} \frac{\partial}{\partial\rho_l}
  (b_i - \rho_i)
  - (\Delta \rho_k) \frac{\partial }{\partial \rho_k} 
  (b_i - \rho_i) 
= - \Delta \rho_i.
\]
Note that by (\ref{eq:SupDeltaRho}), for $k = 1, \dots, n$,
\[
r|\Delta \rho_k | \leq (n-1) (\Lambda r) \coth(\Lambda \iota/16).
\]
Consequently, from interior elliptic estimates (see Lemma \ref{le:IntEll} or \cite[Theorem 9.11]{gilbarg_elliptic_2001}) we obtain
\[
\left\| \frac{\partial b_i}{\partial \rho_j} - \delta_{ij} \right\|_{L^\infty(B_{r/2}(p))} \leq C_4(n, \Lambda r, \Lambda \iota) (\Lambda r).
\]
Hence, the map $p \to (b_1(p), \dots, b_n(p))$ is one-to-one if
\[
C_4(n,\Lambda r, \Lambda \iota)(\Lambda r) < \frac{1}{n}.
\]
Since
\begin{equation}
\label{eq:ExpandIPbi}
g(\nabla b_i, \nabla b_j) = g(\nabla \rho_k, \nabla \rho_\ell) \frac{\partial b_i}{\partial \rho_k} \frac{\partial b_j}{\partial \rho_\ell},
\end{equation}
we may estimate
\[
\left\|g(\nabla b_i, \nabla b_j) - \delta_{ij}\right\|_{L^\infty(B_{r/2})} \leq C_5(n, \Lambda r, \Lambda \iota) (\Lambda r)^{1/2}.
\]

From the interior elliptic estimates it also follows that for $x_1, x_2 \in B_{r/2}(p)$,
\[
r^\alpha  \left| \frac{\partial b_i}{\partial \rho_j}(x_1) - \frac{\partial b_i}{\partial \rho_j}(x_2) \right| 
\leq C_6(n, \Lambda r, \Lambda \iota,\alpha) (\Lambda r) d(x_1,x_2)^\alpha.
\]
Again using (\ref{eq:ExpandIPbi}) 
we find
\[
r^\alpha \left| g(\nabla b_i, \nabla b_j)(x_1) - g(\nabla b_i, \nabla b_j)(x_2) \right| \leq C_7(n,\Lambda r,\Lambda \iota,\alpha) (\Lambda r)^{1/2} d(x_1,x_2)^\alpha.
\]
\end{proof}

\bibliography{ReferencesEigenfunctions}
\bibliographystyle{plain}

\end{document}